\documentclass[a4paper, reqno, 11pt]{article}
\usepackage{color}
\makeatletter

\@addtoreset{equation}{section}
\makeatother
\usepackage{setspace}
\usepackage{xcolor}
\usepackage{here}
\usepackage{graphicx}
\usepackage{float}
\usepackage{mathtools}
\usepackage{array}
\usepackage{booktabs}
\usepackage{tabularx}
\usepackage[T1]{fontenc}
\usepackage[hidelinks]{hyperref}
\everymath{\displaystyle}
\usepackage{amsmath}
\usepackage{amssymb}
\usepackage{latexsym}
\usepackage{amsthm}

\newtheorem{Thm}{Theorem}[section]

\newtheorem{Lem}[Thm]{Lemma}
\newtheorem{Prop}[Thm]{Proposition}
\newtheorem{Conj}[Thm]{Conjecture}
\newtheorem{Cor}[Thm]{Corollary}

\theoremstyle{definition}

\newtheorem{Def}[Thm]{Definition}
\newtheorem{Rem}[Thm]{Remark}
\newtheorem{Exa}[Thm]{Example}

\def\AGM{\mathrm{AGM}}

\newcommand{\SPD}{\operatorname{SPD}}

\begin{document}

\title{Dual-connection midpoint matrix means and their Gauss composition}
\author{
  Frank Nielsen\\
  Sony Computer Science Laboratories Inc.\\
  E-mail: \texttt{Frank.Nielsen@acm.org}
  \and
  Kazuki Okamura\\
  Department of Mathematics, Faculty of Science\\
  Shizuoka University\\
  E-mail: \texttt{okamura.kazuki@shizuoka.ac.jp}
}
\date{\today}
\maketitle

\begin{abstract}
Nakamura [J. Comput. Appl. Math. 131 (2001)] proved that the arithmetic--harmonic matrix iteration converges quadratically to the geometric matrix mean, the Riemannian midpoint of the affine-invariant metric on positive-definite matrices.
We consider the more general problem of when a Riemannian midpoint is a Gauss compound mean.
A Riemannian metric and an affine connection determine three midpoint maps associated with the connection, its metric dual, and the Levi--Civita connection.
We show that the Gauss composition of the first two equals the Levi--Civita midpoint if and only if the latter is invariant under one step of the iteration.
We give a sufficient condition for this invariance in terms of an isometry which acts as the point reflection about the Levi--Civita midpoint and exchanges the connection with its dual.
Under this invariance, the iterations converge quadratically for sufficiently close initial pairs.

Nakamura's iteration is an example of this criterion.
The criterion also applies to dual pairs of matrix power means whose Gauss composition is the geometric matrix mean.
We give a dually flat Hessian example which shows that duality alone does not suffice.
We also give non-flat examples with a Euclidean metric and a parallel cubic form in every dimension larger than one.
In dimension one, we characterize the invariance completely by using the Matkowski--Sut\^{o} equation.
For Euclidean dual pairs, we conjecture that the invariance implies that the cubic form is constant.
We prove this conjecture in dimension one and for scalar multiples of a constant cubic form.
\end{abstract}

\medskip
\noindent\textbf{2020 Mathematics Subject Classification.}
Primary 53B12; Secondary 26E60, 39B12, 53B05, 53C22.

\section{Introduction}

\subsection{Gauss's compound means}

Gauss's arithmetic--geometric mean (AGM) is the classical example of a compound mean.
Denote the arithmetic and geometric means by 
 $A(p,q) = (p+q)/2$ and $G(p,q) = \sqrt{pq}$ for $p, q > 0$,  respectively.
 The AGM of $p$ and $q$ is defined as
\[\AGM(p,q) = \lim_{n\rightarrow\infty} a_n = \lim_{n\rightarrow\infty} g_n,\]
 where $a_n = A(a_{n-1},g_{n-1})$ and $g_n = G(a_{n-1},g_{n-1})$ with $a_0 = p$ and $g_0 = q$.
Given two means $A$ and $M$, set $a_0=p$ and $m_0=q$ and define
\[a_n=A(a_{n-1},m_{n-1}), \qquad m_n=M(a_{n-1},m_{n-1}) \quad (n\geq 1).\]
If both sequences converge to a common limit, that limit is called their \emph{Gauss composition}~\cite{daroczy-pales-2002,nielsen-2023} and is denoted by $(A\otimes M)(p,q)$.

The Gauss composition need not belong to the same class as its component means.
For example, the arithmetic and geometric means are homogeneous quasi-arithmetic means, but the AGM is not a quasi-arithmetic mean.
Indeed, only power means are homogeneous quasi-arithmetic means \cite[Theorem 84]{hardy-littlewood-polya-1952}, and the AGM is not a power mean.

\subsection{Geometric matrix compound mean and information geometry}

Nakamura~\cite[Theorems~9 and~10]{nakamura-2001} showed that the matrix arithmetic--harmonic iteration on $\SPD(m)$, which is the space of $m\times m$ symmetric positive-definite matrices, converges quadratically to the affine-invariant geometric matrix mean 
$$ P\#Q = P^{\frac{1}{2}}\, \left(P^{-\frac{1}{2}}\, Q\, P^{-\frac{1}{2}} \right)^{\frac{1}{2}}\, P^{\frac{1}{2}}. $$
This pair of arithmetic and harmonic means has an interpretation in information geometry~\cite{AmariNagaoka2000}.
Let a Riemannian metric $g$ and an affine connection $\nabla$ be fixed.
The connection $\nabla$, its $g$-dual $\nabla^{*}$, and the Levi--Civita connection $\nabla^{g}$ determine three geodesic midpoint means $A$, $M$, and $M_g$.
For the Euclidean connection and the affine-invariant metric, the pair $(A,M)$ is the arithmetic--harmonic pair.
The scalar case is the invariance of the geometric mean under the arithmetic--harmonic pair.
This appears in Sut\^{o}'s 1914 solution of the invariance problem for quasi-arithmetic means~\cite{suto-1914-2}.
See Remark~\ref{rem:suto}.
For multivariate normal distributions, Kobayashi computes Fisher--Rao midpoints by lifting the problem to a horizontal geodesic in a positive-definite matrix manifold, applying the arithmetic--harmonic iteration, and projecting back~\cite{kobayashi2023geodesics}.

The matrix power means of Lim and P\'alfia~\cite{lim-palfia-2012} provide a family containing the arithmetic, geometric, and harmonic means.
For two matrices, their interpretation as points on geodesics of affine connections was established by P\'alfia~\cite[Theorem~12.4]{palfia-2013}.
In Section~\ref{subsec:alpha}, we use these connections to study the Gauss composition of the means with opposite parameters.
We also relate this construction to the quasi-arithmetic matrix means and the metrics studied by Hiai and Petz~\cite{hiai-petz-2009} and Kim~\cite{kim-2018}.

\subsection{Contributions}

We consider whether the Gauss composition of this geometrically determined pair is the Riemannian midpoint, that is, whether $A \otimes M = M_g$.
 
\begin{figure}
\centering
\includegraphics[width=0.75\textwidth]{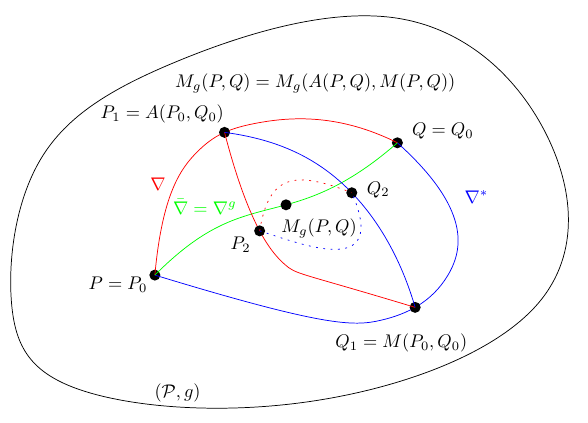}
\caption{Under convergence of the mean-type iteration, midpoint invariance ensures that the Gauss composition equals the Levi--Civita midpoint.\label{fig:match}}
\end{figure}

The pair $(A,M)$ defines the \emph{mean-type mapping}~\cite{matkowski-1999-iterations} 
\[ T(P,Q) \coloneqq \left(A(P,Q), M(P,Q)\right).\]
A single step of the joint iteration is $(P_{n+1},Q_{n+1}) = T(P_{n},Q_{n})$. 
We say that a set $\mathcal{D}$ of pairs is \emph{$T$-invariant} when $T(\mathcal{D}) \subseteq \mathcal{D}$.
On any $T$-invariant domain on which the iteration converges to a common limit, 
Theorem~\ref{thm:invariance} shows that $A \otimes M = M_g$ is equivalent to the midpoint invariance
\begin{equation}\label{eq:Mg-inv}
M_{g} \left(A(P,Q),M(P,Q)\right) = M_{g} (P,Q). 
\end{equation}
See Figure~\ref{fig:match}.

For $C^2$ local midpoint maps, we show that the mean-type iteration converges quadratically for sufficiently close pairs without any further hypothesis (Theorem~\ref{thm:local-existence}).
On a strongly convex normal neighborhood, the invariance implies that the limit is the Levi--Civita midpoint (Theorem~\ref{thm:local-convergence}).
In the affine-invariant example of Section~\ref{sec:nakamura}, the convergence is global with the explicit constant $|\alpha|/2$ (Proposition~\ref{prop:alpha-family}).
We solve the corresponding scalar recursion in closed form and give the number of iterations needed for a prescribed accuracy (Corollary~\ref{cor:alpha-closed-form}). 

We give a sufficient condition for the invariance identity in terms of point symmetry (Theorem~\ref{thm:symmetry}).
For each fixed admissible pair, suppose that there is an isometry on a suitable invariant domain which acts as the point reflection at $M_{g} (P,Q)$ and exchanges $\nabla$ and $\nabla^*$.
If this isometry has no other fixed point in the domain, then the invariance identity \eqref{eq:Mg-inv} holds.
This criterion does not require flatness, affine coordinates, or explicit formulas for the midpoint maps.
It gives a proof by symmetry of the one-step midpoint invariance in Nakamura's iteration.
It also gives the same invariance for the dual pairs in the power-mean family $\nabla^{(\alpha)}$ of $\alpha$-connections on $\SPD(m)$.
The corresponding mean-type iterations converge quadratically to the geometric matrix mean from every initial pair (Proposition~\ref{prop:alpha-family}). 

However, duality alone does not suffice.
We give an explicit dually flat counterexample using a Hessian metric on $\SPD(2)$, for which $A \otimes M  \ne  M_g$ (Section~\ref{sec:counterexample}).
We also consider a Euclidean metric with a parallel cubic form.
This construction gives non-flat examples in every dimension $d \ge 2$ for which the invariance identity holds locally for sufficiently close pairs (Section~\ref{sec:parallel-cubic}).
Thus dual flatness is neither sufficient nor necessary for local midpoint invariance.  

In dimension one, the invariance condition is completely characterized (Section~\ref{sec:one-dim}). 
After an affine change of the primal coordinate and a constant rescaling of the metric, the structure is either trivial or of scalar arithmetic--harmonic--geometric type.
Restriction to suitable common autoparallel curves then yields necessary conditions in higher dimensions.

Finally, we consider Euclidean structures determined by a smooth totally symmetric cubic form $C$.
We conjecture that local midpoint invariance implies that $C$ is constant.
Thus, according to the conjecture, the parallel-cubic examples give all Euclidean dual solutions of the invariance identity (Section~\ref{sec:conjecture}).
By a fourth-order expansion of the geodesic midpoint maps at the diagonal, we obtain a pointwise polynomial necessary condition.
Using this condition, we prove the conjecture in dimension one and for cubic forms that are scalar multiples of a constant one.
We also give an example which shows that the duality between the two connections cannot be dropped.

\subsection{Outline}

The rest of this paper is organized as follows.
In Section~\ref{sec:setting}, we describe the dualistic setting and define the three midpoint means.
In Section~\ref{sec:GC}, we introduce the Gauss composition and the invariance identity, and give the local quadratic-convergence lemma.
Section~\ref{sec:symmetry} is devoted to the point-symmetry criterion.
In Sections~\ref{sec:nakamura}--\ref{sec:parallel-cubic}, we give the three examples.
In Section~\ref{sec:one-dim}, we give the one-dimensional classification.
In Section~\ref{sec:conjecture}, we state the rigidity conjecture for Euclidean dual pairs and prove the fourth-order obstruction.
In Appendix~\ref{app:remarks}, we give further remarks on dualistic structures which are not needed for the main arguments.

\section{The dualistic geometric setting}\label{sec:setting}

Let $(\mathcal{P},g)$ be a Riemannian manifold and let $\nabla$ be an affine connection on $\mathcal{P}$.
The $g$-dual, or conjugate~\cite{nomizu-1992}, connection $\nabla^*$ of $\nabla$ with respect to $g$ is defined by
\begin{equation}\label{eq:duality}
 X[g(Y,Z)]  =  g(\nabla_{X} Y, Z)  +  g(Y, \nabla_X^{*} Z),
\end{equation}
for any vector fields $X$, $Y$, and $Z$. 
Since the metric $g$ is non-degenerate, this identity uniquely determines the dual connection $\nabla^*$.
It also gives $(\nabla^*)^{*} = \nabla$. 
We write $\nabla^{g}$ for the Levi--Civita connection of $g$.

Assume that every relevant pair of points is joined by a unique geodesic for each connection under consideration.
This assumption is a restriction even for the Levi--Civita connection.
The injectivity radius guarantees uniqueness of the minimizing geodesic, but does not exclude longer geodesic segments with the same endpoints.
For the Stiefel manifold with the canonical and Euclidean metrics, the shortest geodesic loops and the injectivity radius are determined in \cite{stoye2026shortest}.
On Hadamard manifolds (Example~\ref{exa:special-cases}(1) below) uniqueness holds globally.
In local constructions, uniqueness is understood among geodesic segments contained in the specified convex normal neighborhood, as in Sections~\ref{sec:GC} and~\ref{sec:parallel-cubic}.
For the Levi--Civita connection on a strongly convex neighborhood, we use the unique minimizing segment.

\begin{Def}[Midpoint mean]\label{def:midpoint-mean}
Let $D$ be an affine connection on $\mathcal{P}$, and let $P$ and $Q$ be joined by a unique affinely parametrized $D$-geodesic $\gamma \colon [0,1] \to \mathcal{P}$ satisfying $\gamma(0) = P$ and $\gamma(1) = Q$, with uniqueness understood in the ambient manifold or in the specified convex normal neighborhood, as above.
The \emph{midpoint mean} of $P$ and $Q$ with respect to $D$ is $M_{D}(P,Q) \coloneqq \gamma(1/2)$.  
We denote the midpoint means of the three connections by
\[ A(P,Q) \coloneqq  M_{\nabla}(P,Q), \; M(P,Q) \coloneqq  M_{\nabla^*}(P,Q), \; M_g(P,Q) \coloneqq  M_{\nabla^g}(P,Q). \]
\end{Def}

The following two properties of $M_{D}$ follow from Definition~\ref{def:midpoint-mean} and the uniqueness assumption.
First, the reversed curve $t \mapsto \gamma(1-t)$ is an affinely parametrized $D$-geodesic which joins $Q$ to $P$ and has midpoint $\gamma(1/2)$.
By uniqueness, it is the geodesic defining $M_{D}(Q,P)$.
Thus $M_{D}$ is \emph{symmetric}, that is, $M_{D}(P,Q) = M_{D}(Q,P)$.
Second, the constant curve at $P$ is a $D$-geodesic joining $P$ to itself. 
By uniqueness it is the geodesic defining $M_{D}(P,P)$, so $M_{D}$ is \emph{idempotent}, that is, $M_{D}(P,P) = P$.
Both properties enter the proof of Theorem~\ref{thm:local-existence}.

In terms of the exponential map of $D$, whenever $Q$ lies in a normal neighborhood of $P$,
\begin{equation}\label{eq:midpoint-exp}
M_{D}(P,Q) = \operatorname{Exp}^{D}_P \left(  \frac{1}{2} \left(\operatorname{Exp}^{D}_P\right)^{-1}(Q)\right),
\end{equation}
where $\operatorname{Exp}^{D}_P$ denotes the exponential map of $D$ at $P$, defined by $\operatorname{Exp}^{D}_P(v) \coloneqq \gamma_{v}(1)$ for the $D$-geodesic $\gamma_{v}$ with $\gamma_{v}(0) = P$ and $\dot{\gamma}_{v}(0) = v$.
Since $\operatorname{Exp}^{D}_P$ is a local diffeomorphism at the origin of $T_{P}\mathcal{P}$, its inverse is well defined on a normal neighborhood of $P$; see \cite[Chapter~III]{kobayashi-nomizu-1963}.

\begin{Rem}[Affine parameter versus arc length]\label{rem:affine-vs-arclength}
In Definition~\ref{def:midpoint-mean}, the midpoint is taken with respect to the affine parameter of $D$, not with respect to $g$-arc length.
For $D = \nabla^{g}$ the affine parameter is proportional to arc length, so $M_g(P,Q)$ is the equidistant point on the Riemannian geodesic joining $P$ and $Q$.
For $\nabla$ and $\nabla^{*}$, however, the $g$-speed of a geodesic need not be constant, and the affine-parameter midpoint generally differs from the arc-length midpoint.
This distinction is needed even in dimension one.
The geodesics of $\nabla$, $\nabla^{*}$, and $\nabla^{g}$ joining two points have the same image, but their parametrizations may differ.
These parametrizations account for the differences between $A$, $M$, and $M_g$ on the scalar rays of Sections~\ref{sec:nakamura} and~\ref{sec:counterexample} and in the one-dimensional classification of Section~\ref{sec:one-dim}.
\end{Rem}

\begin{Exa}\label{exa:special-cases}
(1) On a Hadamard manifold, $M_{g}(P,Q)$ is globally defined and coincides with the unique equal-weight Karcher mean of $P$ and $Q$;
see \cite{lim2015approximations}.\\
(2) If $\nabla = \nabla^E$ is the Euclidean flat connection on a convex matrix domain, then $A(P,Q) = (P + Q)/2$.
\end{Exa}

The mean $A$ is determined by $\nabla$.
The dual connection $\nabla^*$ and its mean $M$ are determined by the pair $(g,\nabla)$, and the Riemannian midpoint $M_g$ is determined by $g$.
Thus, once $g$ and $\nabla$ are fixed, the mean $M$ cannot be chosen independently as the midpoint mean of the $g$-dual connection.
Affine changes of dual coordinates and the usual affine gauge of a Hessian potential do not change the induced midpoint map.

We will use the following two facts about dual pairs.
Their proofs and further remarks on dualistic structures are given in Appendix~\ref{app:remarks}.
First, if both $\nabla$ and $\nabla^{*}$ are torsion-free, then their arithmetic mean $\frac{1}{2}(\nabla + \nabla^{*})$ is torsion-free and $g$-metric, hence equals $\nabla^{g}$, so that
\begin{equation}\label{eq:dual-average}
\nabla^{*} = 2\nabla^{g} - \nabla ;
\end{equation}
moreover, for torsion-free $\nabla$, the dual connection $\nabla^{*}$ is torsion-free if and only if the cubic tensor $\nabla g$ is totally symmetric, which is the setting of statistical manifolds~\cite{lauritzen-1987}.
Second, if $\nabla$ is flat, that is, both curvature-free and torsion-free, then locally there are $\nabla$-affine coordinates $\theta$ in which $\theta\left(A(P,Q)\right) = \frac{1}{2}\left(\theta(P) + \theta(Q)\right)$.
Thus $A$ is a quasi-arithmetic mean in the coordinates $\theta$.
The arithmetic formula of Example~\ref{exa:special-cases}(2) is the case in which the given coordinates are affine.
For a torsion-free connection $\nabla$, nonzero curvature at a point prevents such a coordinate representation on a neighborhood of that point, although $A$ is still a geodesic midpoint mean.
For connections with torsion, midpoint maps depend only on the torsion-free part; see Remark~\ref{rem:curvature-freeness}.
Neither flatness nor a coordinate representation is used in the invariance criterion of Theorem~\ref{thm:symmetry}.

\section{The Gauss composition}\label{sec:GC}

Starting from $P_0  =  P$ and $Q_0  =  Q$, consider the mean-type iteration~\cite{nielsen-2023,nakamura-2001}
\[ P_{n + 1}  =  A(P_n,Q_n), \quad Q_{n + 1}  =  M(P_n,Q_n). \]
If both sequences converge to the same point, we define their Gauss composition by
\[ (A\otimes M)(P,Q) \coloneqq \lim_{n\to\infty} P_n  =  \lim_{n\to\infty} Q_n. \]

\begin{Thm}[Invariance criterion for the Gauss composition to match the Levi--Civita midpoint]\label{thm:invariance}
Let $\mathcal{D} \subset \mathcal P\times\mathcal P$ be a domain on which the midpoint means $A$, $M$, and $M_g$ are all defined, that is, each pair $(P,Q) \in \mathcal{D}$ is joined by a unique geodesic of each of the three connections (Definition~\ref{def:midpoint-mean}). 
Set $T(P,Q) \coloneqq (A(P,Q),M(P,Q))$, and assume that $T(\mathcal{D}) \subseteq \mathcal{D}$. 
For $n=0,1,\ldots$, let $T^{n}$ denote the $n$-fold iterate of $T$, with $T^{0}=\operatorname{id}_{\mathcal D}$, so that $T^{n}(P,Q)=(P_n,Q_n)$. 
Suppose that, for every $(P,Q) \in \mathcal{D}$, there exists a point $L(P,Q) \in \mathcal{P}$ such that
\[ T^n(P,Q) \longrightarrow \left(L(P,Q),L(P,Q)\right), \quad n \to \infty, \]
so that the Gauss composition $(A \otimes M)(P,Q) = L(P,Q)$ is defined for every $(P,Q) \in \mathcal{D}$, and suppose that $M_g$ is also defined at each limit pair $\left(L(P,Q),L(P,Q)\right)$ and is continuous there.
Then $A \otimes M  =  M_g$ on $\mathcal{D}$ if and only if the invariance identity \eqref{eq:Mg-inv} holds on $\mathcal{D}$, that is,  
\[ M_{g}\left(A(P,Q),M(P,Q)\right)  =  M_{g}(P,Q), \quad (P,Q) \in \mathcal{D}. \]
\end{Thm}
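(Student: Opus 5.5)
The proof splits into the two implications. Both rest on the fact that $M_g$ is constant along $T$-orbits exactly when the invariance identity holds.

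\emph{Invariance implies $A\otimes M = M_g$.} Fix $(P,Q)\in\mathcal D$ and write $(P_n,Q_n)=T^n(P,Q)$. Since $T(\mathcal D)\subseteq\mathcal D$, every $(P_n,Q_n)$ lies in $\mathcal D$. We may therefore apply \eqref{eq:Mg-inv} at each of these pairs, and induction on $n$ gives
\[
M_g(P_n,Q_n)=M_g(P_{n-1},Q_{n-1})=\cdots=M_g(P,Q)\quad\text{for all } n\ge 0 .
\]
By hypothesis, $(P_n,Q_n)\to(L,L)$ with $L=L(P,Q)$, and $M_g$ is defined and continuous at $(L,L)$. Hence $M_g(P_n,Q_n)\to M_g(L,L)$. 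Idempotency of the midpoint mean $M_{\nabla^g}$, noted after Definition~\ref{def:midpoint-mean}, gives $M_g(L,L)=L$. The sequence $M_g(P_n,Q_n)$ is constant, so $M_g(P,Q)=L=(A\otimes M)(P,Q)$.

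\emph{$A\otimes M = M_g$ implies invariance.} The key observation is that the Gauss composition is itself invariant under one step of $T$. For $(P,Q)\in\mathcal D$, the pair $T(P,Q)$ lies in $\mathcal D$. Its orbit is $T^n(T(P,Q))=T^{n+1}(P,Q)$, which converges to $(L(P,Q),L(P,Q))$. Limits are unique in the Hausdorff manifold $\mathcal P$, so $L(T(P,Q))=L(P,Q)$. Using the assumption $A\otimes M=M_g$ on $\mathcal D$, once at $T(P,Q)$ and once at $(P,Q)$, we obtain
\[
M_g(A(P,Q),M(P,Q)) = L(T(P,Q)) = L(P,Q) = M_g(P,Q),
\]
which is \eqref{eq:Mg-inv}.

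Neither direction has a real obstacle. The only point needing care is the limit step in the first direction. Continuity of $M_g$ at the diagonal limit pair is precisely what lets us pass to the limit. Idempotency is what identifies $M_g(L,L)$ with $L$; this requires $(L,L)$ to lie in the domain of $M_g$, and that is guaranteed by the hypothesis that $M_g$ is defined there. I would state explicitly that the uniqueness-of-geodesics convention makes the constant curve the defining geodesic at $(L,L)$. Without that convention, $M_g(L,L)=L$ could fail, for example if a nontrivial geodesic loop at $L$ were allowed.
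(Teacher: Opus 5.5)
Your proof is correct and follows the paper's argument essentially step for step. One direction uses the one-step invariance $L(T(P,Q))=L(P,Q)$, which comes from $T^n(T(P,Q))=T^{n+1}(P,Q)$; the other uses constancy of $M_g$ along the orbit together with continuity and idempotence of $M_g$ at $(L,L)$.
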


\begin{proof}
Fix $(P,Q) \in \mathcal{D}$ and write $L = L(P,Q)$.
Since $T(P,Q) \in \mathcal{D}$ and $T^{n}(T(P,Q)) = T^{n+1}(P,Q) \to (L,L)$, we have $L(T(P,Q)) = L(P,Q)$.
Indeed, removing the first iterate does not change the limit.
Thus $A \otimes M$ is invariant under one application of the mean-type mapping $T$.
If $A \otimes M = M_g$ on $\mathcal{D}$, this is the identity \eqref{eq:Mg-inv} at $(P,Q)$.

Conversely, if \eqref{eq:Mg-inv} holds on $\mathcal{D}$, then $M_g(P_n,Q_n) = M_g(P,Q)$ for all $n$, and the continuity of $M_g$ at $(L,L)$ together with the idempotence $M_g(L,L) = L$ gives
\[ M_g(P,Q)  =  M_g(P_n,Q_n) \longrightarrow M_g(L,L)  =  L, \quad n \to \infty. \]
Hence $(A \otimes M)(P,Q) = L = M_g(P,Q)$.
\end{proof}

Duality of connections alone does not imply the invariance identity \eqref{eq:Mg-inv}. 
Consequently, even though $A$, $M$, and $M_g$ arise from the same dualistic structure, it is possible that $A \otimes M \ne M_g$.

Theorem~\ref{thm:invariance} assumes convergence of the mean-type iteration.
This assumption holds near the diagonal.
For any two sufficiently regular symmetric idempotent maps, the iteration converges quadratically for sufficiently close pairs.
In particular, this applies to the local midpoint maps of any two affine connections.
The existence result does not require a metric, duality, or the invariance identity \eqref{eq:Mg-inv}.
These are used only to identify the limit in Theorem~\ref{thm:local-convergence} below.

Throughout this section, a map $N$ from an open subset of $\mathcal P\times\mathcal P$ to $\mathcal P$ is called \emph{symmetric} if $N(X,Y)=N(Y,X)$ and \emph{idempotent} if $N(X,X)=X$ whenever both sides are defined.
By Definition~\ref{def:midpoint-mean}, local midpoint maps are symmetric and idempotent.
Given such maps $A$ and $M$, we write $T(X,Y)\coloneqq\left(A(X,Y),M(X,Y)\right)$ and $(P_{n+1},Q_{n+1})\coloneqq T(P_n,Q_n)$, as in Theorem~\ref{thm:invariance}.

\begin{Thm}[Local existence and quadratic convergence of the Gauss composition]\label{thm:local-existence}
Let $o\in\mathcal P$, and let $A$ and $M$ be symmetric idempotent maps of class $C^{2}$ from an open neighborhood $\mathcal W$ of $(o,o)$ in $\mathcal P\times\mathcal P$ to $\mathcal P$.
Fix a coordinate chart around $o$ and write $|\cdot|$ for the coordinate norm.
Then there exist a compact set $\mathcal K$ with $\mathcal K\times\mathcal K\subset\mathcal W$, an open neighborhood $\mathcal U\subset\mathcal K$ of $o$, an open set $\mathcal D$ invariant under $(X,Y)\mapsto(Y,X)$ with $T(\mathcal D)\subset\mathcal D\subset\mathcal K\times\mathcal K$, and constants $\varepsilon>0$ and $C>0$ with the following properties, where
\[ \Omega_{\varepsilon}\coloneqq\left\{(X,Y)\in\mathcal U\times\mathcal U\colon |X-Y|<\varepsilon\right\}\subset\mathcal D. \]
For every $(P_0,Q_0)\in\mathcal D$, with $\delta_n\coloneqq|P_n-Q_n|$:\\
(1) $(P_n,Q_n)\in\mathcal D$ and $\delta_{n+1}\le C\delta_n^{2}\le\frac{1}{2}\delta_n$ for every $n\ge0$;\\
(2) there exists $L(P_0,Q_0)\in\mathcal K$ such that $|P_n-L(P_0,Q_0)|\le2\delta_n$ and $|Q_n-L(P_0,Q_0)|\le3\delta_n$ for every $n\ge0$; in particular, $(A\otimes M)(P_0,Q_0)=L(P_0,Q_0)$ is defined.\\
Moreover, the map $L=A\otimes M$ is continuous, symmetric, and idempotent on $\mathcal D$, and\\
(3) $\left|L(X,Y)-\dfrac{A(X,Y)+M(X,Y)}{2}\right|\le\dfrac{2}{3}C^{3}|X-Y|^{4}$ for every $(X,Y)\in\mathcal D$.
\end{Thm}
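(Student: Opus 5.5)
Work entirely in the fixed chart around $o$ and identify points with coordinate vectors. The basic estimate comes from the symmetry and idempotence of $A$ and $M$. Write $N\in\{A,M\}$ and expand $N(X,Y)$ around the diagonal point $\left(\frac{X+Y}{2},\frac{X+Y}{2}\right)$, setting $h=(X-Y)/2$. Idempotence gives $N(Z,Z)=Z$. Differentiating this yields $\partial_1N(Z,Z)+\partial_2N(Z,Z)=I$, and symmetry gives $\partial_1N(Z,Z)=\partial_2N(Z,Z)=\frac12 I$. Hence the first-order term $\partial_1N\,h-\partial_2N\,h$ vanishes. Taylor's theorem with $C^2$ remainder on a compact convex coordinate ball $\mathcal K$ with $\mathcal K\times\mathcal K\subset\mathcal W$ then gives
\[ \left|N(X,Y)-\tfrac{X+Y}{2}\right|\le C_0|X-Y|^2 \]
on $\mathcal K\times\mathcal K$, where $C_0$ comes from a bound on second derivatives. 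Subtracting the estimates for $A$ and $M$ gives $\delta_{n+1}=|A-M|\le C\delta_n^2$ with $C=2C_0$.

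Next I construct the domains. Take $\mathcal U$ to be a small open ball around $o$ whose closed $r$-enlargement lies in the interior of $\mathcal K$. Let $\mathcal D$ be the set of pairs $(X,Y)$ with $|X-Y|<\varepsilon$ and $\frac{X+Y}{2}$ within some radius $\rho$ of $o$, where $\varepsilon\le 1/(2C)$ and $\rho,\varepsilon$ are small relative to $r$. With this choice $\mathcal D$ is open, swap-invariant, and contains $\Omega_\varepsilon$ once $\rho$ exceeds the radius of $\mathcal U$.

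\textbf{Main obstacle.} The delicate step is showing $T(\mathcal D)\subset\mathcal D$. The midpoint drifts by at most $C_0\delta^2$ per step, while $\delta$ contracts by $\frac12$. This is not automatically enough: the drift must be absorbed. I would therefore define $\mathcal D$ through a Lyapunov-type quantity, namely
\[ \left|\tfrac{X+Y}{2}-o\right|+K|X-Y|<\rho, \]
together with $|X-Y|<\varepsilon$. Choose $K\ge 2C_0\varepsilon$ so that the midpoint drift $C_0\delta^2$ is dominated by the decrease $K(\delta-\delta_{n+1})\ge K\delta/2$. With this definition the quantity does not increase along the iteration, and invariance follows. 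Induction then gives (1).

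For (2), the midpoint increments $\left|\frac{P_{n+1}+Q_{n+1}}{2}-\frac{P_n+Q_n}{2}\right|\le C_0\delta_n^2\le\frac14\delta_n$ are summable against the geometric decay of $\delta_n$. So the midpoints are Cauchy and converge to a limit $L$ lying in $\mathcal K$. The tail sum is at most $\frac14\sum_{k\ge n}\delta_k\le\frac12\delta_n$, and adding $\frac12\delta_n$ for the distance from $P_n$ to the midpoint gives $|P_n-L|\le\delta_n\le 2\delta_n$. The same bound holds for $Q_n$. The stated constants $2$ and $3$ leave slack if one instead telescopes along $P_n$ itself using $|P_{n+1}-P_n|\le\delta_n/2+C_0\delta_n^2$.

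For the properties of $L$, symmetry follows because $T(X,Y)=T(Y,X)$, and idempotence follows because $T(X,X)=(X,X)$. Continuity follows because $L$ is a uniform limit of the continuous maps $P_n$, by the uniform bound $2\delta_n\le2\varepsilon 2^{-n}$. For (3), apply (2) starting from $T(X,Y)$. The limit is unchanged, so $|L-\frac{A+M}{2}|\le\frac12\delta_1+\sum_{k\ge1}C_0\delta_k^2$, and this is refined using $\delta_2\le C\delta_1^2\le C^3\delta_0^4$. The cleanest route bounds $|L-\frac{P_1+Q_1}{2}|\le\sum_{k\ge1}C_0\delta_k^2\le C_0\delta_1^2\cdot\frac43$, which is of order $C^3\delta_0^4$ up to constants. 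Matching exactly $\frac23C^3$ needs $C_0=C/2$ and the geometric factor $\sum_{j\ge0}4^{-j}=\frac43$, giving $\frac C2\cdot\frac43\cdot C^2\delta_0^4=\frac23C^3\delta_0^4$.
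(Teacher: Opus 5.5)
Your proof is correct and follows the paper's argument in all essentials: the first-order cancellation from symmetry and idempotence giving $\left|N(X,Y)-\frac{X+Y}{2}\right|\le C_0|X-Y|^2$, a Lyapunov-type invariant domain, telescoping, continuity of $L$ as a uniform limit, and the midpoint summation that yields exactly $\frac{2}{3}C^{3}$ in (3) with $C=2C_0$. The only difference is bookkeeping. You control the domain through the second-order midpoint drift with the quantity $\left|\frac{X+Y}{2}-o\right|+K|X-Y|$; you still need to fix $\rho$ to be at least the radius of $\mathcal U$ plus $K\varepsilon$, and $\rho+\varepsilon/2$ to be at most the radius of $\mathcal K$, both easily arranged. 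The paper instead uses the first-order drift $|N(X,Y)-X|\le|X-Y|$ and the quantity $\max\{|X-o|,|Y-o|\}+2|X-Y|$, so your telescoping along midpoints even gives the sharper bounds $|P_n-L|,|Q_n-L|\le\delta_n$.
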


\begin{proof}
For $N\in\{A,M\}$, let $D_1 N_{(X,Y)}$ and $D_2 N_{(X,Y)}$ denote the partial differentials of $N$ at $(X,Y)$ with respect to the first and the second argument, respectively, and let $D_2^2 N$ denote the second partial differential with respect to the second argument; along the diagonal, $D_1 N_{(X,X)}$ and $D_2 N_{(X,X)}$ are endomorphisms of $T_{X}\mathcal{P}$ because $N(X,X)=X$.
Differentiating the identity $N(X,X)=X$ along the diagonal yields
\[ D_{1} N_{(X,X)}+D_{2} N_{(X,X)}= \operatorname{id}_{T_{X} \mathcal{P}},\]
whereas symmetry gives $D_1 N_{(X,X)}=D_2 N_{(X,X)}$.
Therefore,
\[ D_1 A_{(X,X)} = D_2 A_{(X,X)} = D_1 M_{(X,X)} = D_2 M_{(X,X)} =\frac{1}{2} \operatorname{id}_{T_{X} \mathcal{P}}.\]

Identify points near $o$ with their coordinates, and for $r>0$ let $\mathcal B_r$ denote the open coordinate ball of radius $r$ centered at $o$.
Since $\mathcal W$ is an open neighborhood of $(o,o)$ and $A(o,o)=M(o,o)=o$, there is $r_0>0$ such that $\overline{\mathcal B_{r_0}}$ lies in the chart and $A$ and $M$ are of class $C^{2}$ and take values in the chart on a neighborhood of $\overline{\mathcal B_{r_0}}\times\overline{\mathcal B_{r_0}}$.
Put $\mathcal K\coloneqq\overline{\mathcal B_{r_0}}$,
\[ C_N \coloneqq \frac{1}{2} \sup_{\mathcal K\times\mathcal K} \left\| D_2^2 N \right\| < \infty \quad (N\in\{A,M\}), \]
\[ C\coloneqq\max\{1,C_A+C_M\}, \qquad \kappa\coloneqq\max\{C_A,C_M\}, \]
and choose $\varepsilon>0$ with
\[ C\varepsilon\le\frac{1}{2}, \qquad \kappa\varepsilon\le\frac{1}{2}, \qquad \varepsilon\le\frac{r_0}{8}. \]
Let $(X,Y)\in\mathcal K\times\mathcal K$.
By the convexity of $\mathcal K$, the segment
\[ t\mapsto\left(X,\,X+t(Y-X)\right),\qquad 0\le t\le1, \]
stays in $\mathcal K\times\mathcal K$.
Applying Taylor's formula with integral remainder along this segment, and using $N(X,X)=X$ and $D_2N_{(X,X)}=\frac12\operatorname{id}$, gives
\[ N(X,Y) = \frac{X + Y}{2} + R_N(X,Y), \]
where
\[ R_N(X,Y) \coloneqq \int_0^1 (1-t)\, D_2^2 N_{(X,\, X + t(Y-X))}\!\left[Y-X,\, Y-X\right] dt, \]
so that $\left| R_N(X,Y) \right| \le C_N\, |Y - X|^2$.
For both $A$ and $M$, the sum of the zeroth- and first-order terms is the coordinate arithmetic mean $\frac{X+Y}{2}$, so these terms cancel in the difference, and
\begin{equation}\label{eq:general-quadratic-gap}
 \left| A(X,Y) - M(X,Y) \right| = \left| R_A(X,Y) - R_M(X,Y) \right| \le C\, |X-Y|^2
\end{equation}
for every $(X,Y)\in\mathcal K\times\mathcal K$.
This cancellation follows from symmetry and idempotence through the diagonal derivatives computed above, and gives the quadratic rate.
By the same expansion, we obtain the following bound for the displacement of each argument in one step:
\begin{equation}\label{eq:drift}
 \left|N(X,Y)-X\right| \le \left(\frac{1}{2}+C_N|X-Y|\right)|X-Y| \le |X-Y|
\end{equation}
for every $(X,Y)\in\mathcal K\times\mathcal K$ with $\kappa|X-Y|\le\frac{1}{2}$.

Set $\mathcal U\coloneqq\mathcal B_{r_0/2}$ and
\[
\mathcal D\coloneqq\left\{(X,Y)\in\mathcal B_{r_0}\times\mathcal B_{r_0}\colon
\begin{array}{l}
|X-Y|<\varepsilon,\\
\max\{|X-o|,|Y-o|\}+2|X-Y|<\frac{3}{4}r_0
\end{array}\right\}.
\]
The set $\mathcal D$ is open and invariant under exchanging its two arguments.
Moreover, $\Omega_\varepsilon\subset\mathcal D$ because $\max\{|X-o|,|Y-o|\}+2|X-Y|<\frac{r_0}{2}+2\varepsilon\le\frac{3}{4}r_0$ for $(X,Y)\in\Omega_\varepsilon$.
We claim that $T(\mathcal D)\subset\mathcal D$.
Let $(X,Y)\in\mathcal D$ and $(X',Y')\coloneqq T(X,Y)$.
By \eqref{eq:general-quadratic-gap} and $C\varepsilon\le\frac12$,
\[ |X'-Y'|\le C|X-Y|^2\le\frac{1}{2}|X-Y|<\varepsilon. \]
By \eqref{eq:drift}, both $|X'-X|$ and $|Y'-X|$ are at most $|X-Y|$.
Hence
\[
\begin{aligned}
&\max\{|X'-o|,|Y'-o|\}+2|X'-Y'|\\
&\qquad\le |X-o|+2|X-Y|\\
&\qquad\le\max\{|X-o|,|Y-o|\}+2|X-Y|<\frac{3}{4}r_0.
\end{aligned}
\]
In particular, $X',Y'\in\mathcal B_{r_0}$, so $(X',Y')\in\mathcal D$.
This proves the claim.

Let $(P_0,Q_0)\in\mathcal D$.
By the claim, $(P_n,Q_n)\in\mathcal D\subset\mathcal K\times\mathcal K$ for every $n$, and \eqref{eq:general-quadratic-gap} with $C\delta_n\le C\varepsilon\le\frac12$ gives $\delta_{n+1}\le C\delta_n^2\le\frac12\delta_n$, which is (1).
In particular, $\delta_k\le2^{-(k-n)}\delta_n$ for $k\ge n$, and by \eqref{eq:drift},
\[ |P_m-P_n|\le\sum_{k=n}^{m-1}|P_{k+1}-P_k|\le\sum_{k\ge n}\delta_k\le2\delta_n, \qquad m\ge n. \]
Thus $(P_n)$ is a Cauchy sequence in the compact set $\mathcal K$; let $L(P_0,Q_0)\in\mathcal K$ be its limit.
Letting $m\to\infty$ gives $|P_n-L(P_0,Q_0)|\le2\delta_n$, and $|Q_n-L(P_0,Q_0)|\le\delta_n+|P_n-L(P_0,Q_0)|\le3\delta_n$.
Since $\delta_n\to0$, both sequences converge to $L(P_0,Q_0)$, which is (2).

Symmetry of $A$ and $M$ gives $T(Y,X)=T(X,Y)$, so $L(Y,X)=L(X,Y)$; idempotence gives $T(X,X)=(X,X)$, so $L(X,X)=X$.
Since $T$ maps $\mathcal D$ continuously into itself, each iterate $T^{n}$ is continuous on $\mathcal D$, and by (2),
\[ \left|P_n-L(P_0,Q_0)\right|\le2\delta_n\le2^{1-n}\varepsilon, \qquad (P_0,Q_0)\in\mathcal D, \]
so $L$ is the uniform limit on $\mathcal D$ of the continuous maps $(P_0,Q_0)\mapsto P_n$ and is therefore continuous.

For (3), the expansion of $A$ and $M$ gives, for every $n\ge0$,
\[ \frac{P_{n+1}+Q_{n+1}}{2}-\frac{P_n+Q_n}{2}=\frac{1}{2}\left(R_A+R_M\right)(P_n,Q_n), \]
whose norm is at most $\frac{C}{2}\delta_n^2$.
Since $\frac{P_1+Q_1}{2}=\frac{A(P_0,Q_0)+M(P_0,Q_0)}{2}$ and $\frac{P_n+Q_n}{2}\to L(P_0,Q_0)$, summing over $n\ge1$ and using $\delta_1\le C\delta_0^2$ and $\delta_n\le2^{-(n-1)}\delta_1$ yields
\[ \left|L(P_0,Q_0)-\frac{A(P_0,Q_0)+M(P_0,Q_0)}{2}\right|\le\frac{C}{2}\sum_{n\ge1}\delta_n^2\le\frac{C}{2}\cdot\frac{4}{3}\delta_1^2\le\frac{2}{3}C^{3}\delta_0^{4}. \qedhere \]
\end{proof}

Theorem~\ref{thm:local-existence} does not require a metric.
When $\mathcal P$ carries a Riemannian metric $g$, the coordinate norm and the Riemannian distance $d_g$ are uniformly comparable on the compact set $\mathcal K$.
Thus the quadratic gap estimate and the convergence bounds in (1) and (2) hold with $d_g$ in place of $|\cdot|$ after changing the constants.
We use this fact in the following theorem to identify the limit.
The form of estimate (3) depends on the coordinates, but the estimate holds in every chart with a constant depending on the chart.
It states that the Gauss composition differs from the coordinate arithmetic mean of its two components by $O(|X-Y|^4)$.
In general, the difference between two symmetric idempotent maps is only $O(|X-Y|^2)$.
In the one-dimensional scalar case with $A$ the arithmetic and $M$ the geometric mean, this is the classical observation that $\AGM(1-h,1+h)$ and $\frac{1}{2}\left(A+G\right)(1-h,1+h)$ differ by $O(h^{4})$, while $A-G$ is of order $h^{2}$.

We recall two classical notions; see \cite{whitehead-1932-convex} and \cite[Chapter~III]{kobayashi-nomizu-1963}.
A neighborhood $\mathcal V$ of a point $o \in \mathcal P$ is a \emph{normal neighborhood} of $o$ if $\mathcal V$ is the diffeomorphic image under $\operatorname{Exp}^{\nabla^{g}}_{o}$ of a star-shaped open neighborhood of the origin of $T_{o}\mathcal P$.
The neighborhood $\mathcal V$ is \emph{strongly convex} with respect to $g$ if any two points of $\mathcal V$ are joined by a unique minimizing $g$-geodesic and the geodesic segment is contained in $\mathcal V$; in particular, since subarcs of minimizing geodesics are minimizing, the affine midpoint of such a segment bisects the Riemannian distance between its endpoints.
Every point of $\mathcal P$ has arbitrarily small strongly convex normal neighborhoods \cite{whitehead-1932-convex}.
On a strongly convex normal neighborhood $\mathcal V$, the Levi--Civita midpoint $M_g$ is defined and continuous on $\mathcal V\times\mathcal V$, because the unique minimizing geodesic segment depends continuously on its endpoints \cite[Chapter~III]{kobayashi-nomizu-1963}.

\begin{Thm}[Local identification of the Gauss composition from midpoint invariance]\label{thm:local-convergence}
Write $d_g$ for the Riemannian distance induced by $g$.
Fix $o\in\mathcal P$, and let $\mathcal V$ be a strongly convex normal neighborhood of $o$ with respect to $g$.
Suppose that the local midpoint maps $A$ and $M$ are uniquely defined and of class $C^{2}$ on an open neighborhood $\mathcal W\subset\mathcal V\times\mathcal V$ of $(o,o)$, with $A(X,Y),M(X,Y)\in\mathcal V$ for every $(X,Y)\in\mathcal W$, and that
\begin{equation}\label{eq:local-midpoint-invariance}
 M_g\left(A(X,Y),M(X,Y)\right)=M_g(X,Y)
\end{equation}
holds for every $(X,Y)\in\mathcal W$.
Let $\mathcal D$ be the $T$-invariant set of Theorem~\ref{thm:local-existence} for a coordinate chart around $o$, chosen with $\mathcal K\subset\mathcal V$.
Then there exists a constant $C_g>0$ such that for every $(P_0,Q_0)\in\mathcal D$, with $\delta_n\coloneqq d_g(P_n,Q_n)$:\\
(1) $(A\otimes M)(P_0,Q_0)=M_g(P_0,Q_0)$;\\
(2) for every $n\ge0$,
\begin{equation}\label{eq:local-quadratic-gap}
\begin{gathered}
\delta_{n+1}\le C_g\delta_n^{2},\\
d_g\left(P_n,M_g(P_0,Q_0)\right)
=d_g\left(Q_n,M_g(P_0,Q_0)\right)=\frac{\delta_n}{2}.
\end{gathered}
\end{equation}
\end{Thm}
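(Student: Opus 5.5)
Apply Theorem~\ref{thm:local-existence} to $A$ and $M$ on $\mathcal W$, in a chart around $o$ with $\mathcal K\subset\mathcal V$; this is possible after shrinking $r_0$, because $\mathcal V$ is an open neighborhood of $o$. It yields the $T$-invariant set $\mathcal D\subset\mathcal K\times\mathcal K\subset\mathcal V\times\mathcal V$ together with quadratic convergence to a common limit $L(P_0,Q_0)\in\mathcal K$. Every pair in $\mathcal D$ lies in $\mathcal W$, since $\mathcal K\times\mathcal K\subset\mathcal W$. Hence the invariance identity \eqref{eq:local-midpoint-invariance} holds at every iterate $(P_n,Q_n)$, because these stay in $\mathcal D$. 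It follows by induction that $M_g(P_n,Q_n)=M_g(P_0,Q_0)$ for all $n$. Since $\mathcal V$ is strongly convex and normal, $M_g$ is defined and continuous on $\mathcal V\times\mathcal V$, in particular at $(L,L)$ with $L\in\mathcal K\subset\mathcal V$, and it is idempotent. So the argument of Theorem~\ref{thm:invariance}, restricted to $\mathcal D$, gives
\[ M_g(P_0,Q_0)=\lim_{n\to\infty} M_g(P_n,Q_n)=M_g(L,L)=L, \]
which proves (1).

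For the first inequality in (2), compare norms. On the compact set $\mathcal K$, the coordinate norm and $d_g$ are uniformly comparable: there is $c\ge1$ with $c^{-1}|X-Y|\le d_g(X,Y)\le c|X-Y|$ for $X,Y\in\mathcal K$. One has to make sure that the lower bound uses the distance on $\mathcal P$ and not only within the chart. This holds after possibly shrinking $\mathcal K$, because $\mathcal K$ sits inside the strongly convex $\mathcal V$, and minimizing geodesics between points of $\mathcal K$ stay in $\mathcal V$, where the metric coefficients are bounded above and below. Combining this with the coordinate estimate $|P_{n+1}-Q_{n+1}|\le C|P_n-Q_n|^2$ of Theorem~\ref{thm:local-existence}(1) gives $\delta_{n+1}\le c^3C\,\delta_n^2$. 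So we may take $C_g\coloneqq c^3C$.

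The equality part of (2) is the geometric content. By the invariance just established, $M_g(P_n,Q_n)=M_g(P_0,Q_0)$ for every $n$. By the definition of strong convexity, the affine midpoint of the unique minimizing segment from $P_n$ to $Q_n$ bisects the distance. Therefore
\[ d_g\left(P_n,M_g(P_0,Q_0)\right)=d_g\left(P_n,M_g(P_n,Q_n)\right)=\tfrac12\delta_n, \]
and the same holds for $Q_n$. The only delicate point is again that $(P_n,Q_n)\in\mathcal V\times\mathcal V$, so that $M_g(P_n,Q_n)$ is the midpoint of a minimizing segment. This is guaranteed by $\mathcal D\subset\mathcal K\times\mathcal K\subset\mathcal V\times\mathcal V$.

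I expect the main obstacle to be bookkeeping rather than analysis. One must choose the chart and $\mathcal K$ so that three conditions hold at once: $\mathcal K\times\mathcal K\subset\mathcal W$, the inclusion $\mathcal K\subset\mathcal V$, and global comparability of $d_g$ with the coordinate norm on $\mathcal K$. The last of these is the step needing care, and I would handle it via strong convexity of $\mathcal V$ as above.
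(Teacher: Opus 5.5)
Your proof is correct and follows essentially the paper's route. You prove (1) by running the argument of Theorem~\ref{thm:invariance} on the $T$-invariant set $\mathcal D\subset\mathcal K\times\mathcal K\subset\mathcal W$, and you get the quadratic bound by comparing $d_g$ with the coordinate norm on $\mathcal K$; your $C_g=c^3C$ is the paper's $c_2C/c_1^2$. The equalities then follow from $M_g(P_n,Q_n)=M_g(P_0,Q_0)$ and the fact that minimizing segments in $\mathcal V$ are bisected by their midpoints.

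One small point: uniform comparability of $d_g$ and $|\cdot|$ holds on any compact subset of the chart domain. Locally this comes from bounds on the metric coefficients, and away from the diagonal from positivity and continuity of $d_g$. So you do not need to shrink $\mathcal K$, which matters because $\mathcal D$ is fixed in the statement.
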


\begin{proof}
By Theorem~\ref{thm:local-existence}, the Gauss composition $L=A\otimes M$ is defined on $\mathcal D$, $T(\mathcal D)\subset\mathcal D$, and $L(\mathcal D)\subset\mathcal K\subset\mathcal V$.
The midpoint means $A$, $M$, and $M_g$ are defined on $\mathcal D\subset\mathcal W$, and $M_g$ is continuous at $\left(L(P_0,Q_0),L(P_0,Q_0)\right)\in\mathcal V\times\mathcal V$ for every $(P_0,Q_0)\in\mathcal D$.
Since \eqref{eq:local-midpoint-invariance} holds on $\mathcal D$, Theorem~\ref{thm:invariance} applied to the domain $\mathcal D$ gives $A\otimes M=M_g$ on $\mathcal D$, which is (1).

For (2), choose $c_1,c_2>0$ with $c_1|X-Y|\le d_g(X,Y)\le c_2|X-Y|$ for $X,Y\in\mathcal K$.
With $C$ as in Theorem~\ref{thm:local-existence}(1), the quadratic gap estimate gives
\[ \delta_{n+1}\le c_2|P_{n+1}-Q_{n+1}|\le c_2C|P_n-Q_n|^2\le\frac{c_2C}{c_1^{2}}\delta_n^{2} \]
for every $n\ge0$, so we may take $C_g\coloneqq c_2C/c_1^2$.
Write $L=M_g(P_0,Q_0)$.
By (1) and the invariance of the Gauss composition under $T$, $M_g(P_n,Q_n)=(A\otimes M)(P_n,Q_n)=L$ for every $n\ge0$.
Since $P_n,Q_n\in\mathcal V$ and the defining $g$-geodesic segment is minimizing, its midpoint bisects the distance between its endpoints.
Thus $d_g(P_n,L)=d_g(Q_n,L)=\delta_n/2$, which proves \eqref{eq:local-quadratic-gap}.
\end{proof}

\begin{Rem}[Gauss composition and the quartic term]\label{rem:local-theorems}
Estimate (3) of Theorem~\ref{thm:local-existence} relates the Gauss composition to the expansions of Section~\ref{sec:conjecture}.
That estimate alone does not identify the quartic term, since its remainder is itself of order four.
For the smooth Euclidean dual structures of Sections~\ref{sec:parallel-cubic} and~\ref{sec:conjecture}, write $\nabla=\nabla^E+\Gamma$ and $\nabla^*=\nabla^E-\Gamma$.
The quadratic terms of their midpoint maps in \eqref{eq:expansion4} cancel, so, locally and uniformly near the diagonal,
\[
\left|\frac{A(X,Y)+M(X,Y)}{2}-\frac{X+Y}{2}\right|\le B|X-Y|^4
\]
for some $B>0$.
Put $c_n=(P_n+Q_n)/2$ and $\delta_n=|P_n-Q_n|$, and choose the sets $\mathcal K$ and $\mathcal D$ in Theorem~\ref{thm:local-existence} sufficiently small that this estimate holds on $\mathcal K\times\mathcal K$.
Then $|c_{n+1}-c_n|\le B\delta_n^4$, and the estimates of that theorem give
\[
|L(P_0,Q_0)-c_1|
\le B\sum_{n\ge1}\delta_n^4
\le\frac{16}{15}B\delta_1^4
\le\frac{16}{15}BC^4\delta_0^8.
\]
Consequently,
\[
(A\otimes M)(P,Q)-M_g(P,Q)
=\frac{A(P,Q)+M(P,Q)-(P+Q)}{2}+O(|P-Q|^8).
\]
Thus the quartic term of $A\otimes M-M_g$ is one half of that of $A+M-(P+Q)$.
In the notation of Section~\ref{sec:conjecture}, \eqref{eq:expansion4} yields
\[
(A\otimes M)\left(o-\frac h2,o+\frac h2\right)-o
=\frac{1}{384}\mathcal E_\Gamma(o;h)+O(|h|^6).
\]
When the invariance identity $A+M=P+Q$ holds, $A\otimes M=(A+M)/2=M_g$ exactly.
\end{Rem}

\begin{Rem}[Comparison with the Gauss composition of quasi-arithmetic means]\label{rem:qam-comparison}
For quasi-arithmetic means on an interval, conjugation by the generator of one component makes that component arithmetic; see \cite{daroczy-pales-2002,jarczyk-jarczyk-2018}.
If the invariant mean is also quasi-arithmetic, it can instead be made arithmetic by conjugation with its generator, as in Remark~\ref{rem:suto}.
These two normalizations need not coincide.
For a torsion-free connection $\nabla$, a local coordinate representation making $A$ arithmetic exists exactly when $\nabla$ is flat.
Indeed, a smooth midpoint map determines the torsion-free connection through its second-order expansion at the diagonal, and the converse follows from affine coordinates (Section~\ref{sec:setting}).
Thus the non-flat examples in Sections~\ref{subsec:alpha} and~\ref{sec:parallel-cubic} are not covered by the quasi-arithmetic theory.
\end{Rem}

\section{A point-symmetry criterion for invariance}\label{sec:symmetry}

In this section, we give a sufficient condition for the invariance identity \eqref{eq:Mg-inv} of Theorem~\ref{thm:invariance} in arbitrary dimension.
The condition is based on the symmetry used in Nakamura's invariance identity, as we show in Section~\ref{sec:nakamura}.
It also gives this identity for a whole one-parameter affine family of dual connections.
In Section~\ref{sec:one-dim}, we give necessary conditions by restricting to one-dimensional submanifolds. 

Suppose that the standing assumptions of Section~\ref{sec:setting} are valid for the pairs $(P,Q)$ and $\left(A(P,Q),M(P,Q)\right)$. 
Specifically, we assume that $(P,Q)$ is joined by a unique geodesic of each of $\nabla$, $\nabla^{*}$, and $\nabla^{g}$, and $\left(A(P,Q),M(P,Q)\right)$ is joined by a unique $\nabla^{g}$-geodesic, so that the midpoint means $A(P,Q)$, $M(P,Q)$, $M_g(P,Q)$, and $M_g\left(A(P,Q),M(P,Q)\right)$ appearing in the invariance identity \eqref{eq:Mg-inv} are defined (Definition~\ref{def:midpoint-mean}).

For a diffeomorphism $s$ and an affine connection $D$, 
we use the push-forward convention $(s_*D)_{s_*X}(s_*Y) \coloneqq s_*(D_{X} Y)$ for vector fields $X,Y$. 
With this convention, $s_*D = D'$ means that $s$ maps affinely parametrized $D$-geodesics to affinely parametrized $D'$-geodesics. 
When $s(U)=U$, the same notation is used for the corresponding restricted connections on $U$.

\begin{Thm}[Point-symmetry criterion]\label{thm:symmetry}
Let $(\mathcal{P},g)$ be a Riemannian manifold equipped with an affine connection $\nabla$ and its $g$-dual connection $\nabla^*$.
Fix a pair $(P,Q)$ and set $o\coloneqq M_g(P,Q)$.
Assume that the midpoint constructions $A(P,Q), M(P,Q), M_g(P,Q)$ and $M_g\left(A(P,Q),M(P,Q)\right)$ are uniquely defined.
Suppose that there exist an isometry $s$ of $(\mathcal{P},g)$ and an $s$-invariant domain $\mathcal{U} \subset \mathcal{P}$ such that $P, Q \in \mathcal{U}$ and all the geodesic segments defining the above midpoints are contained in $\mathcal{U}$. 
Assume further that\\
(a) $s(o) = o$ and  $ds_o = -\operatorname{id}_{T_o\mathcal{P}}$;\\
(b) $s_{*} \nabla = \nabla^*$ on $\mathcal{U}$;\\
(c) $\operatorname{Fix}(s)\cap \mathcal{U} = \{o\}$, where we let $\operatorname{Fix}(s) \coloneqq \left\{X \in \mathcal{P} \colon s(X) = X \right\}$.\\
Then 
\[ M_g \left(A(P,Q),M(P,Q)\right) = M_g (P,Q).\] 
\end{Thm}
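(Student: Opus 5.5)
The plan is to show that $s$ exchanges $P$ and $Q$, then exchanges $A(P,Q)$ and $M(P,Q)$, and finally that the Levi--Civita midpoint of $A(P,Q)$ and $M(P,Q)$ is a fixed point of $s$ in $\mathcal U$, hence equals $o$ by (c).

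\textbf{Step 1: $s(P)=Q$ and $s(Q)=P$.} Let $\gamma\colon[0,1]\to\mathcal U$ be the $\nabla^g$-geodesic from $P$ to $Q$, so that $\gamma(1/2)=o$. Since $s$ is an isometry, $s\circ\gamma$ is a $\nabla^g$-geodesic. By (a), $s(o)=o$, and
\[ \tfrac{d}{dt}(s\circ\gamma)(1/2)=ds_o\dot\gamma(1/2)=-\dot\gamma(1/2). \]
The reversed curve $t\mapsto\gamma(1-t)$ is also a $\nabla^g$-geodesic with the same value and velocity at $t=1/2$. By uniqueness of geodesics with given initial data, $s(\gamma(t))=\gamma(1-t)$, and in particular $s(P)=Q$ and $s(Q)=P$.

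\textbf{Step 2: $s(A(P,Q))=M(P,Q)$.} Let $\alpha\subset\mathcal U$ be the $\nabla$-geodesic from $P$ to $Q$. By (b), $s\circ\alpha$ is an affinely parametrized $\nabla^*$-geodesic from $s(P)=Q$ to $s(Q)=P$. Since $s(\mathcal U)=\mathcal U$, it lies in $\mathcal U$. Its reversal is then a $\nabla^*$-geodesic from $P$ to $Q$ in $\mathcal U$. By the uniqueness assumption it is the geodesic defining $M(P,Q)$, with the same midpoint. Hence
\[ s(A(P,Q))=\alpha'(1/2)=M(P,Q), \]
where $\alpha'$ denotes this reversal. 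Since $s$ need not be an involution, I would also obtain $s(M(P,Q))=A(P,Q)$ separately. Because $s$ is an isometry, (b) gives $s_*\nabla^*=(s_*\nabla)^*=\nabla^{**}=\nabla$, using that push-forward by an isometry commutes with $g$-duality via \eqref{eq:duality}. The same argument applied to the $\nabla^*$-geodesic then gives $s(M(P,Q))=A(P,Q)$.

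\textbf{Step 3: the new midpoint is fixed by $s$.} Let $\beta\subset\mathcal U$ be the $\nabla^g$-geodesic from $A(P,Q)$ to $M(P,Q)$. Then $s\circ\beta$ is a $\nabla^g$-geodesic in $\mathcal U$ from $M(P,Q)$ to $A(P,Q)$. Its reversal joins $A(P,Q)$ to $M(P,Q)$, so by uniqueness it equals $\beta$. Therefore
\[ s\bigl(M_g(A(P,Q),M(P,Q))\bigr)=M_g(A(P,Q),M(P,Q)), \]
and this point lies in $\mathcal U$. By (c) it equals $o=M_g(P,Q)$.

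\textbf{Main obstacle.} The delicate step is the uniqueness bookkeeping. Each image curve must stay inside $\mathcal U$, which follows from $s$-invariance of $\mathcal U$, so that the uniqueness hypotheses, understood within the specified domain, apply. The other point needing care is the identity $s_*\nabla^*=\nabla$ for an isometry. I would verify it by pushing \eqref{eq:duality} forward through $s$, using $s^*g=g$.
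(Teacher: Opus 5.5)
Your proposal is correct and follows the paper's proof essentially step for step. You show that $s$ swaps $P$ and $Q$ via (a) and geodesic uniqueness at $o$. You then show that $s$ swaps $A(P,Q)$ and $M(P,Q)$ via (b) together with $s_*\nabla^*=(s_*\nabla)^*=\nabla$ for an isometry, and conclude that $M_g(A(P,Q),M(P,Q))$ is a fixed point of $s$ in $\mathcal U$, hence equal to $o$ by (c). The only cosmetic difference is that you argue with explicit geodesic reversals, while the paper packages this as the identity $s\bigl(M_D(X,Y)\bigr)=M_{s_*D}\bigl(s(X),s(Y)\bigr)$ combined with the symmetry of midpoint maps.
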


\begin{proof}
Since $o$ is the midpoint of the unique $g$-geodesic joining $P$ and $Q$, we may write $P = \operatorname{Exp}^{\nabla^{g}}_{o}(-v)$ and $Q = \operatorname{Exp}^{\nabla^{g}}_{o}(v)$. 
An isometry maps affinely parametrized geodesics to affinely parametrized geodesics. 
Therefore (a) gives $s\left(\operatorname{Exp}^{\nabla^{g}}_{o}(u)\right) = \operatorname{Exp}^{\nabla^{g}}_{o}(-u)$. 
In particular, $s$ exchanges $P$ and $Q$.

Because $s$ preserves $g$, push-forward by $s$ commutes with $g$-duality:
\[
(s_{*}D)^{*} = s_{*}(D^{*}) \qquad \text{for every affine connection } D.
\]
This follows by applying the isometry $s$ to the defining identity \eqref{eq:duality} and using $s^{*}g = g$. 
Hence (b) implies
\[ s_{*}\nabla^{*} = (s_{*}\nabla)^{*} = (\nabla^{*})^{*} = \nabla. \]

For any affine connection $D$ and any pair of points $X,Y$ for which the indicated midpoint is uniquely defined, the push-forward convention gives
\[ s\left(M_D(X,Y)\right) = M_{s_*D}\left(s(X),s(Y)\right). \]
Indeed, $s$ maps the unique affinely parametrized $D$-geodesic joining $X$ and $Y$ to the corresponding $s_*D$-geodesic joining $s(X)$ and $s(Y)$, preserving its affine parametrization.

Applying this to $D = \nabla$ and $D = \nabla^{*}$, and using that midpoint means are symmetric in their two arguments,
\[ s(A(P,Q)) = M_{\nabla^{*}}(Q,P) = M(P,Q), \ \ s(M(P,Q)) = M_{\nabla}(Q,P) = A(P,Q), \]
so $s$ exchanges $A(P,Q)$ and $M(P,Q)$.

Let $m \coloneqq M_g\left(A(P,Q),M(P,Q)\right)$. 
By the choice of $\mathcal{U}$, we have $m \in \mathcal{U}$. 
Since $s$ is an isometry, it preserves the Levi--Civita connection and hence its midpoint map. 
Therefore,
\[ s(m) = M_g\left(s(A(P,Q)),s(M(P,Q))\right) = M_g \left(M(P,Q),A(P,Q)\right) = m. \]
Thus $m \in \operatorname{Fix}(s)\cap \mathcal{U}$. 
By condition~(c), $m=o$.
Since $o = M_g (P,Q)$, the desired invariance identity follows.
\end{proof}

Theorem~\ref{thm:symmetry} applies to each fixed pair $(P,Q)$.
If its hypotheses hold for every pair in a neighborhood of the diagonal, then the midpoint invariance holds on that neighborhood.
By Theorem~\ref{thm:local-convergence}, the mean-type iteration then converges locally and quadratically to $M_g(P,Q)$.

\begin{Rem}\label{rem:symmetry}
(1) 
By the Cartan--Hadamard theorem, a Hadamard manifold is uniquely geodesic. 
Hence condition~(c) is automatic under~(a). 
Indeed, suppose that $s(X) = X$ with $X \ne o$.
Then $s$ maps the unique geodesic from $o$ to $X$ to a geodesic from $o$ to $X$.
By uniqueness, it maps this geodesic to itself with the same parametrization.
Thus $ds_{o}$ fixes its initial velocity, which contradicts (a). 
The same argument applies locally on any $s$-invariant convex normal neighborhood of $o$.\\
(2) On a Riemannian symmetric space, the geodesic symmetry $s_o$ satisfies (a).
By (1), condition (c) holds locally, but may require a separate check globally.\\
(3) Assume additionally that both $\nabla$ and $\nabla^*$ are torsion-free. 
Let 
\[ K \coloneqq \nabla - \nabla^g, \quad \nabla^{(\alpha)}\coloneqq\nabla^g + \alpha K, \quad \alpha \in \mathbb{R}. \]
Then
\[ \nabla^{(1)} = \nabla,\qquad \nabla^{(-1)} = \nabla^*,\qquad \left(\nabla^{(\alpha)}\right)^{*}   =\nabla^{(-\alpha)}. \]
Here $\nabla^{(-1)} = \nabla^{*}$ is \eqref{eq:dual-average}.
If an isometry $s$ satisfies $s_{*} \nabla=\nabla^*$, then,
since $s_{*} \nabla^g = \nabla^g$,
$s_{*} K = -K$ and hence $s_{*} \nabla^{(\alpha)} = \nabla^{(-\alpha)}$ for every $\alpha \in \mathbb{R}$.

Let $A_\alpha\coloneqq M_{\nabla^{(\alpha)}}$ and $M_\alpha\coloneqq M_{\nabla^{(-\alpha)}}$.
Suppose that for each $(P,Q)\in\Omega\subset\mathcal P\times\mathcal P$ there is an isometry $s_{P,Q}$ satisfying (a) at $o=M_g(P,Q)$ and exchanging $\nabla$ and $\nabla^*$.
For every $\alpha$ under consideration, assume that all geodesic segments required in Theorem~\ref{thm:symmetry} for the dual pair $(\nabla^{(\alpha)},\nabla^{(-\alpha)})$ are uniquely defined and lie in an $s_{P,Q}$-invariant domain $\mathcal U_{P,Q,\alpha}$ with $\operatorname{Fix}(s_{P,Q})\cap\mathcal U_{P,Q,\alpha}=\{o\}$.
Then Theorem~\ref{thm:symmetry} gives $M_g\left(A_\alpha(P,Q),M_\alpha(P,Q)\right)=M_g(P,Q)$ on $\Omega$.
For fixed $\alpha$, put $T_\alpha=(A_\alpha,M_\alpha)$.
If $T_\alpha(\Omega)\subset\Omega$, the iterates converge to a common limit for every pair in $\Omega$, and $M_g$ is defined and continuous at each limiting diagonal pair, then Theorem~\ref{thm:invariance} gives $A_\alpha\otimes M_\alpha=M_g$ on $\Omega$.
\end{Rem}

\section{Nakamura's affine-invariant example}\label{sec:nakamura}

On the manifold $\SPD(m)$ of $m\times m$ symmetric positive-definite matrices, 
consider the affine-invariant Riemannian metric
$g_P(X,Y) = \operatorname{tr}\left(P^{-1}XP^{-1}Y \right)$. 
For the Euclidean flat connection $\nabla^E$, the midpoint mean is the arithmetic mean $A(P,Q) = \frac{P + Q}{2}$.
The $g$-dual connection has geodesics that are affine in the inverse-matrix coordinate, and its midpoint mean is the harmonic mean $H(P,Q) = 2(P^{-1} + Q^{-1})^{-1}$.
The midpoint mean of the Levi--Civita connection is the affine-invariant geometric mean~\cite{nakamura-2001}
\[ G(P,Q) = P\#Q \coloneqq P^{1/2} \left(P^{-1/2}QP^{-1/2}\right)^{1/2} P^{1/2}. \]
With respect to this metric, $\SPD(m)$ is a Hadamard manifold and a Riemannian symmetric space~\cite{bhatia-2007}.

\subsection{Nakamura's arithmetic--harmonic iteration}\label{subsec:AHM}

We apply Theorem~\ref{thm:symmetry} to obtain the invariance identity for this structure without directly computing the matrix means.
Fix a pair $(P,Q)$ and set $o \coloneqq M_g(P,Q) = P\#Q$.
The geodesic symmetry at $o$ is $\sigma_o(X) \coloneqq oX^{-1}o$.
It satisfies
\[ \sigma_{o} (o) = o, \quad d(\sigma_o)_{o} = -\operatorname{id}_{T_o\SPD(m)}, \]
so condition~(a) of Theorem~\ref{thm:symmetry} holds.
Moreover, the standard Riccati characterization of the geometric mean gives $oP^{-1}o = Q$, and hence $\sigma_{o}(P) = Q$.

Write
\[ \sigma_o = c_o\circ\iota, \quad \iota(X) \coloneqq X^{-1}, \quad c_o(Y) \coloneqq oYo. \]
Both $\iota$ and $c_o$ are isometries of the affine-invariant metric. 
The inversion map exchanges the Euclidean connection
and its $g$-dual: $\iota_*\nabla^E=(\nabla^E)^{*}$.
Since $c_o$ is linear, it preserves $\nabla^E$.
Since it is also an isometry, it preserves the dual connection $(\nabla^E)^*$.
Consequently,
\[ (\sigma_o)_*\nabla^E = (c_o)_*\left(\iota_*\nabla^E\right) = (c_o)_*(\nabla^E)^* =(\nabla^E)^*. \]
Thus condition~(b) of Theorem~\ref{thm:symmetry} holds; condition~(c) holds by Remark~\ref{rem:symmetry}(1), since $\SPD(m)$ is a Hadamard manifold.
Theorem~\ref{thm:symmetry} yields
\[ M_g\left(A(P,Q),H(P,Q)\right) = M_g(P,Q), \]
which is the one-step midpoint invariance in Nakamura's matrix arithmetic--harmonic iteration.
Its scalar case, the invariance of the geometric mean under the arithmetic--harmonic pair, was recorded by Sut\^{o} in 1914 as an instance of his general solution of the invariance problem for quasi-arithmetic means; see Remark~\ref{rem:suto}.

The invariance requirement also determines the harmonic mean.
Indeed, if a mean $N$ satisfies
$A(P,Q)\#N(P,Q)  =  P\#Q$, 
then the equation $X\#Y  =  Z$ gives $Y  =  ZX^{-1}Z$, and hence
\[ N(P,Q)  =  (P\#Q)A(P,Q)^{-1}(P\#Q)  =  2(P^{-1} + Q^{-1})^{-1}  =  H(P,Q). \]
Thus, for the affine-invariant metric and the Euclidean connection, the dual midpoint mean is $H$.
It is also the unique complementary mean under which $G$ is invariant.

Nakamura proved by a linear algebraic argument that the matrix arithmetic--harmonic iteration converges quadratically for every initial pair.
He first treated the case $Q_0 = I$ using the identity $Q_{n+1} - R_{n+1} = \frac{1}{2}(Q_n - R_n)^2(Q_n + R_n)^{-1}$, and then obtained the general case by congruence \cite[Theorems~9 and~10]{nakamura-2001}.
Together with this convergence, the invariance identity gives Nakamura's identity $A \otimes H  =  G$ by Theorem~\ref{thm:invariance}.
For sufficiently close pairs, Theorem~\ref{thm:local-convergence} gives quadratic convergence from the invariance identity alone.
Thus it follows from the symmetry of Theorem~\ref{thm:symmetry}, without using the explicit matrix identities.

\subsection{A one-parameter family of power-mean iterations}\label{subsec:alpha}

By Remark~\ref{rem:symmetry}(3), the isometry $\sigma_o$ also exchanges the connections
\[ \nabla^{(\alpha)} \coloneqq \nabla^g + \alpha\left(\nabla^E-\nabla^g\right) \quad \text{and} \quad \nabla^{(-\alpha)}, \qquad \alpha \in \mathbb{R}, \]
and we now apply Theorem~\ref{thm:symmetry} to the pair $\left(\nabla^{(\alpha)},\nabla^{(-\alpha)}\right)$ with $\mathcal{U} = \SPD(m)$.
Conditions (a) and (c) hold as in Section~\ref{subsec:AHM}, and it remains to verify that any two points of $\SPD(m)$ are joined by a unique $\nabla^{(\alpha)}$-geodesic segment.
In the ambient matrix coordinates,
\[ \left(\nabla^{(\alpha)}_X Y\right)_P = DY(P)[X] - \frac{1-\alpha}{2}  \left(XP^{-1}Y + YP^{-1}X\right).\]
For $-1\le\alpha\le1$, these connections and their power-mean geodesics were studied by P\'alfia~\cite[Theorems~7.1 and~12.4]{palfia-2013}, with the parameter correspondence $\kappa=1-\alpha$.
We recall the calculation for all real $\alpha$ to verify the hypotheses of Theorem~\ref{thm:symmetry} on the whole cone.
The geodesic equation is
\[ \ddot{X} -(1-\alpha) \dot{X} X^{-1} \dot{X} = 0.\]
Let $\alpha \ne 0$.
The $\nabla^{(\alpha)}$-geodesic with $\gamma(0) = P$ and $\dot{\gamma}(0) = V$ is
\begin{equation}\label{eq:alpha-geodesic}
\gamma(t) = P^{1/2}\left(I + t\alpha W\right)^{1/\alpha}P^{1/2}, \qquad W \coloneqq P^{-1/2}VP^{-1/2},
\end{equation}
on the interval on which $I + t\alpha W$ is positive definite, all matrix powers being the principal positive-definite powers.
Indeed, the geodesic equation is invariant under congruences $X \mapsto CXC^{\top}$, so it suffices to take $P = I$.
Then $Z(t) \coloneqq I + t\alpha W$ satisfies $\ddot{Z} = 0$.
All the matrices involved are functions of $W$ and commute.
Hence $Y \coloneqq Z^{1/\alpha}$ satisfies $\dot{Y} = Z^{1/\alpha - 1}W$ and $\ddot{Y} = (1-\alpha)Z^{1/\alpha - 2}W^{2} = (1-\alpha)\dot{Y}Y^{-1}\dot{Y}$.
Every $\nabla^{(\alpha)}$-geodesic segment in $\SPD(m)$ starting at $P$ is of the form \eqref{eq:alpha-geodesic}, and $\gamma(1) = Q$ holds if and only if $W = \alpha^{-1}\left(S^{\alpha} - I\right)$, where
\[ S \coloneqq P^{-1/2} Q P^{-1/2}. \]
Hence the unique $\nabla^{(\alpha)}$-geodesic segment from $P$ to $Q$ is
\[  \gamma_\alpha(t) = P^{1/2}\left((1-t)I+tS^\alpha\right)^{1/\alpha}P^{1/2}, \quad 0 \le t \le 1, \]
with the limiting interpretation $\gamma_0(t) = P^{1/2} S^t P^{1/2}$ for $\alpha = 0$.
Thus the standing assumptions of Section~\ref{sec:setting} hold for every $\alpha$, and Theorem~\ref{thm:symmetry} yields
\begin{equation}\label{eq:alpha-invariance}
M_g\left(A_\alpha(P,Q),M_\alpha(P,Q)\right) = M_g(P,Q), \quad P, Q \in \SPD(m), \quad \alpha \in \mathbb{R},
\end{equation}
where $A_\alpha \coloneqq M_{\nabla^{(\alpha)}}$ and $M_\alpha \coloneqq M_{\nabla^{(-\alpha)}}$.

We now give explicit formulas for the midpoint means.
Let
\[  f_\alpha(S) \coloneqq 
  \begin{cases}
    \displaystyle
    \left(\frac{I+S^\alpha}{2}\right)^{1/\alpha}, & \alpha \ne 0,\\ 
    S^{1/2},&\alpha = 0.
  \end{cases}\]
Then, by $\gamma_\alpha(1/2)$,
\begin{equation}\label{eq:alpha-means}
A_{\alpha} (P,Q) = P^{1/2} f_\alpha(S)P^{1/2}, \qquad M_{\alpha} (P,Q) = P^{1/2} f_{-\alpha}(S)P^{1/2}. 
\end{equation}
For $-1\le\alpha\le1$, $A_\alpha$ is the Kubo--Ando mean represented by $f_\alpha$~\cite{kubo-ando-1980}; this function is operator monotone precisely in that parameter range~\cite[Example~4.1]{palfia-2013}.
It is also the two-variable, equal-weight power mean of Lim and P\'alfia; see Remark~\ref{rem:operator-power-means} for its fixed-point characterization and its relation to Kim's notation.
For $|\alpha|>1$, the midpoint maps remain well-defined, but the family is outside the class of Kubo--Ando operator means.
When $P$ and $Q$ do not commute, the surrounding congruence factors cannot in general be omitted.
The quasi-arithmetic matrix power means $\big(\tfrac{1}{2}(P^{\alpha} + Q^{\alpha})\big)^{1/\alpha}$ and $\big(\tfrac{1}{2}(P^{-\alpha} + Q^{-\alpha})\big)^{-1/\alpha}$ do not in general coincide with the midpoint means of $\nabla^{(\pm\alpha)}$.
Their pair does not in general leave $P\#Q$ invariant either (Remark~\ref{rem:power-pullback-metrics}).
Identity~\eqref{eq:alpha-invariance} can also be verified from \eqref{eq:alpha-means}: since $f_{-\alpha}(S) = S f_\alpha(S)^{-1}$, the matrices $f_\alpha(S)$ and $f_{-\alpha}(S)$ commute, and
\[ A_\alpha(P,Q)\#M_\alpha(P,Q) = P^{1/2}\big(f_\alpha(S) f_{-\alpha}(S)\big)^{1/2}P^{1/2} = P^{1/2}S^{1/2}P^{1/2}, \]
which is $P\#Q$, in accordance with \eqref{eq:alpha-invariance}.

In particular, $(A_1,M_1) = (A,H)$, $A_0 = M_0 = M_g$, and $(A_{-1},M_{-1}) = (H,A)$.
For $m\ge2$ and $\alpha\ne\pm1$, the connection $\nabla^{(\alpha)}$ is not flat, since the $\alpha$-connections of a dually flat structure satisfy $R^{\nabla^{(\alpha)}} = (1-\alpha^{2})R^{\nabla^{g}}$~\cite{AmariNagaoka2000} and $R^{\nabla^g} \ne 0$; hence, unlike $A$ and $H$, the corresponding means $A_\alpha$ do not admit an affine-coordinate quasi-arithmetic representation (Section~\ref{sec:setting}).

We next give the resulting one-parameter family of Gauss compositions.
Since the midpoint maps \eqref{eq:alpha-means} are smooth, Theorem~\ref{thm:local-convergence} gives quadratic convergence for sufficiently close initial pairs.
In this example, the convergence is global.
We show this by reducing the iteration to scalar iterations on the eigenvalues of $S$.

\begin{Prop}[A one-parameter family of Gauss compositions with limit $P\#Q$]\label{prop:alpha-family}
Let $\alpha \in \mathbb{R}$, let $P, Q \in \SPD(m)$, put $S \coloneqq P^{-1/2}QP^{-1/2}$, and define $(P_n,Q_n)_{n \ge 0}$ by $(P_0,Q_0) = (P,Q)$ and
\[ (P_{n+1},Q_{n+1}) = \left(A_\alpha(P_n,Q_n), M_\alpha(P_n,Q_n)\right). \]
Let $a_n, m_n \colon (0,\infty) \to (0,\infty)$ be defined by $a_0(\lambda) = 1$, $m_0(\lambda) = \lambda$, and
\[ a_{n+1} = \left(\frac{a_n^{\alpha} + m_n^{\alpha}}{2}\right)^{1/\alpha}, \qquad m_{n+1} = \left(\frac{a_n^{-\alpha} + m_n^{-\alpha}}{2}\right)^{-1/\alpha} \]
for $\alpha \ne 0$, and by $a_{n+1} = m_{n+1} = \sqrt{a_n m_n}$ for $\alpha = 0$.
Let 
\[ \delta_n \coloneqq \frac{1}{2}\max_{1 \le i \le m} \left|\log \lambda_i\left(P_n^{-1}Q_n\right)\right|,\] 
where $\lambda_1(X), \dots, \lambda_m(X)$ denote the eigenvalues of $X$.
Then \\
(1) $P_n\#Q_n = P\#Q$ for all $n \ge 0$; \\
(2) $P_n = P^{1/2}a_n(S)P^{1/2}$ and $Q_n = P^{1/2}m_n(S)P^{1/2}$ for all $n \ge 0$; \\
(3) $\delta_{n+1} \le \tfrac{|\alpha|}{2}\,\delta_n^{2}$ for all $n \ge 0$, and $\delta_{n+1} < \delta_n$ whenever $\delta_n > 0$; \\
(4) $P_n \to P\#Q$ and $Q_n \to P\#Q$ as $n \to \infty$, and consequently $A_\alpha \otimes M_\alpha = M_g$ on $\SPD(m) \times \SPD(m)$.
\end{Prop}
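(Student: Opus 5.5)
The plan is to reduce everything to the commuting functional calculus of $S$ by congruence, and then to analyze a scalar two-term recursion on each eigenvalue.

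\emph{Step 1: statement (2).} We induct on $n$, with (2) as the hypothesis: $P_n=P^{1/2}a_n(S)P^{1/2}$ and $Q_n=P^{1/2}m_n(S)P^{1/2}$. Both $A_\alpha$ and $M_\alpha$ are congruence-equivariant, i.e.\ $N(CXC^\top,CYC^\top)=C\,N(X,Y)\,C^\top$. This follows from the invariance of the geodesic equation under congruences used above, or directly from the unique-geodesic description. Take $C=P^{1/2}$; it suffices to compute $A_\alpha(a_n(S),m_n(S))$ and $M_\alpha(a_n(S),m_n(S))$. Here the two arguments commute, so the geodesic formula $\gamma_\alpha(t)=X^{1/2}((1-t)I+t(X^{-1}Y)^\alpha)^{1/\alpha}X^{1/2}$ simplifies to $((1-t)X^\alpha+tY^\alpha)^{1/\alpha}$. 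Evaluating at $t=1/2$ gives exactly the scalar recursion applied through the functional calculus, which closes the induction.

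\emph{Step 2: statement (1).} This is \eqref{eq:alpha-invariance} applied at each step. Equivalently, in the scalar form, $a_{n+1}m_{n+1}=a_nm_n=\lambda$, which holds since $(\frac{x+y}{2})(\frac{x^{-1}+y^{-1}}{2})^{-1}=xy$ with $x=a_n^\alpha$ and $y=m_n^\alpha$.

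\emph{Step 3: statement (3).} By (2), $P_n^{-1}Q_n$ is similar to $a_n(S)^{-1}m_n(S)$. Its eigenvalues are therefore $m_n(\lambda_i)/a_n(\lambda_i)$, where $\lambda_i$ runs over the eigenvalues of $S$, so it suffices to prove a scalar estimate. Write $a=e^{c-u}$ and $m=e^{c+u}$ with $u\ge0$. Then
\[
a' = e^{c}\cosh(\alpha u)^{1/\alpha}, \qquad m' = e^{c}\cosh(\alpha u)^{-1/\alpha},
\]
so the new half log-ratio is
\[
u' = \frac{1}{|\alpha|}\log\cosh(\alpha u).
\]
The required bound is then the inequality $\log\cosh x\le x^2/2$, which follows from $\cosh x\le e^{x^2/2}$ by comparing the series termwise. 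This gives $u'\le |\alpha|u^2/2$. Strict decrease follows from $\log\cosh x<x$ for $x>0$, using $\cosh x<e^{x}$. Taking the maximum over $i$ yields (3). For $\alpha=0$ we get $\delta_1=0$ directly.

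\emph{Step 4: statement (4).} This step is the main obstacle. Statement (3) alone gives quadratic decay only once $|\alpha|\delta_n/2<1$, so we first need $\delta_n\to0$ globally. Since $u'=\phi(u)$ with $\phi(x)=|\alpha|^{-1}\log\cosh(\alpha x)$ continuous and $\phi(x)<x$ for $x>0$, the decreasing sequence $u_n$ has a limit $u^\ast$ satisfying $\phi(u^\ast)=u^\ast$, which forces $u^\ast=0$. Because $S$ has finitely many eigenvalues, this gives $\delta_n\to0$, and eventually quadratic convergence. Finally, the scalar identity $a_nm_n=\lambda$ from Step 2 gives $c_n=\tfrac12\log\lambda$ for all $n$. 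Hence $a_n(\lambda),m_n(\lambda)\to\lambda^{1/2}$ on the spectrum, so $P_n,Q_n\to P^{1/2}S^{1/2}P^{1/2}=P\#Q$. The identification $A_\alpha\otimes M_\alpha=M_g$ then follows directly from this limit, or alternatively from Theorem~\ref{thm:invariance} with $\mathcal D=\SPD(m)\times\SPD(m)$.
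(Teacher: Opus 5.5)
Your proposal is correct and follows essentially the same route as the paper: congruence reduction to the commuting functional calculus of $S$, the product identity $a_nm_n=\lambda$, the $\log\cosh$ recursion for the half log-ratio, and a fixed-point argument giving $\delta_n\to0$. The differences are cosmetic. You run the limit argument eigenvalue by eigenvalue, whereas the paper uses the exact recursion $\delta_{n+1}=|\alpha|^{-1}\log\cosh(|\alpha|\delta_n)$. Your normalization $u\ge0$ tacitly relies on the symmetry of the scalar means, since for $\alpha>0$ the sign of the half log-ratio flips after each step, which is why your $u'$ is really its absolute value.
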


\begin{proof}
(1) follows from \eqref{eq:alpha-invariance} by induction on $n$.

(2) The geodesic equation of $\nabla^{(\alpha)}$ and the formula \eqref{eq:alpha-geodesic} are invariant under congruences $X \mapsto CXC^{\top}$ with $C \in \mathrm{GL}(m,\mathbb{R})$, so that
\[ N\left(CXC^{\top},CYC^{\top}\right) = C\, N(X,Y)\, C^{\top} \qquad \text{for } N \in \{A_\alpha, M_\alpha\}. \]
If $X$ and $Y$ commute, then $X^{-1/2}YX^{-1/2} = X^{-1}Y$, and \eqref{eq:alpha-means} gives, for $\alpha \ne 0$,
\begin{align*}
A_\alpha(X,Y) &= X^{1/2}\left(\frac{I + X^{-\alpha}Y^{\alpha}}{2}\right)^{1/\alpha}X^{1/2} = \left(\frac{X^{\alpha} + Y^{\alpha}}{2}\right)^{1/\alpha}, \\
M_\alpha(X,Y) &= \left(\frac{X^{-\alpha} + Y^{-\alpha}}{2}\right)^{-1/\alpha},
\end{align*}
and $A_0(X,Y) = M_0(X,Y) = (XY)^{1/2}$.
Now (2) holds for $n = 0$, and if it holds for $n$, then $a_n(S)$ and $m_n(S)$ commute, so the displayed formulas with $C = P^{1/2}$, $X = a_n(S)$, and $Y = m_n(S)$ give $P_{n+1} = P^{1/2}a_{n+1}(S)P^{1/2}$ and $Q_{n+1} = P^{1/2}m_{n+1}(S)P^{1/2}$.

(3) For $\alpha \ne 0$, we have $a_{n+1}m_{n+1} = a_n m_n$, because
\[ \left(a^{\alpha} + m^{\alpha}\right)\left(a^{-\alpha} + m^{-\alpha}\right)^{-1} = a^{\alpha}m^{\alpha}, \qquad a,m>0. \]
The same identity $a_{n+1}m_{n+1} = a_n m_n$ also holds for $\alpha = 0$.
Hence $a_n(\lambda)m_n(\lambda) = \lambda$, and we may write $a_n = \sqrt{\lambda}\,e^{x_n}$, $m_n = \sqrt{\lambda}\,e^{-x_n}$ with $x_n = x_n(\lambda) \in \mathbb{R}$.
For $\alpha\ne0$, we have $a_{n+1} = \sqrt{\lambda}\,\cosh(\alpha x_n)^{1/\alpha}$, so
\begin{equation}\label{eq:alpha-scalar-recursion}
x_{n+1} = \frac{1}{\alpha}\log\cosh(\alpha x_n) \quad (\alpha \ne 0), \qquad x_{n+1} = 0 \quad (\alpha = 0).
\end{equation}
Since $\log\cosh s \le s^{2}/2$ and $\log\cosh s < |s|$ for $s \ne 0$, we obtain $|x_{n+1}| \le \tfrac{|\alpha|}{2}x_n^{2}$ and $|x_{n+1}| < |x_n|$ for $x_n \ne 0$.
By (2), the eigenvalues of $P_n^{-1}Q_n$ are $m_n(\lambda_i)/a_n(\lambda_i) = e^{-2x_n(\lambda_i)}$, where $\lambda_i = \lambda_i(S)$, so $\delta_n = \max\nolimits_i |x_n(\lambda_i)|$.
For $\alpha\ne0$, monotonicity of $t \mapsto |\alpha|^{-1}\log\cosh(|\alpha|t)$ on $[0,\infty)$ gives the exact recursion
\begin{equation}\label{eq:alpha-gap-recursion}
\delta_{n+1}=\frac{1}{|\alpha|}\log\cosh(|\alpha|\delta_n).
\end{equation}
Together with the preceding inequalities, this proves (3); for $\alpha=0$, it follows directly from $x_{n+1}=0$.

(4) The case $\alpha=0$ is immediate. Assume $\alpha\ne0$.
By (3), $(\delta_n)$ is nonincreasing.
Suppose that its limit $\delta$ is positive.
Letting $n \to \infty$ in \eqref{eq:alpha-gap-recursion}, we obtain $\delta = |\alpha|^{-1}\log\cosh(|\alpha|\delta) < \delta$, a contradiction.
Hence $\delta_n \to 0$, so $a_n(\lambda_i) \to \sqrt{\lambda_i}$ and $m_n(\lambda_i) \to \sqrt{\lambda_i}$ for every eigenvalue $\lambda_i$ of $S$, and (2) gives $P_n, Q_n \to P^{1/2}S^{1/2}P^{1/2} = P\#Q$.
\end{proof}

Hereafter, artanh denotes the inverse function of the hyperbolic tangent function. 

\begin{Cor}[Closed form of the scalar recursion and the iteration count]\label{cor:alpha-closed-form}
Let $\alpha \ne 0$, let $P, Q \in \SPD(m)$ with $P \ne Q$, let $\delta_n$ be as in Proposition~\ref{prop:alpha-family}, and put $\theta \coloneqq \operatorname{artanh}\left(e^{-|\alpha|\delta_0}\right)$. 
Then \\
(1) $\delta_n = |\alpha|^{-1}\log\coth\left(2^{n}\theta\right)$ for every $n \ge 0$. \\
(2) $\min\{n \ge 0 : \delta_n \le \varepsilon\} = \left\lceil \log_2 \dfrac{\operatorname{artanh}(e^{-|\alpha|\varepsilon})}{\theta} \right\rceil$ for every $\varepsilon \in (0,\delta_0)$.
\end{Cor}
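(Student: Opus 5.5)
The plan is to linearize the exact gap recursion \eqref{eq:alpha-gap-recursion} of Proposition~\ref{prop:alpha-family} by a hyperbolic change of variable. Put $u_n \coloneqq e^{-|\alpha|\delta_n}\in(0,1)$. Since $P\ne Q$, we have $S\ne I$, so $\delta_0>0$; by Proposition~\ref{prop:alpha-family}(3) and the strict decrease of $t\mapsto|\alpha|^{-1}\log\cosh(|\alpha|t)$ away from $0$, every $\delta_n$ is positive, so $u_n\in(0,1)$ for all $n$. Write $u_n=\tanh\theta_n$ with $\theta_n>0$, so that $\theta_0=\theta$. Then \eqref{eq:alpha-gap-recursion} reads $u_{n+1}=1/\cosh(|\alpha|\delta_n)=2u_n/(1+u_n^2)$. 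Indeed, $\cosh(|\alpha|\delta_n)=\tfrac12(u_n^{-1}+u_n)$. The map $u\mapsto 2u/(1+u^2)$ is exactly the tanh doubling formula $\tanh 2\theta=2\tanh\theta/(1+\tanh^2\theta)$. Hence $\theta_{n+1}=2\theta_n$, and by induction $\theta_n=2^n\theta$.

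Unwinding the substitution gives $\delta_n=-|\alpha|^{-1}\log\tanh(2^n\theta)=|\alpha|^{-1}\log\coth(2^n\theta)$, which is (1). The main care is in the algebra $\cosh(-\log u)=(u+u^{-1})/2$ and in confirming that $\tanh$ is a bijection of $(0,\infty)$ onto $(0,1)$, so that $\theta_n$ is well defined at each step. There is no real obstacle here; the only thing to spot is that the tanh doubling identity linearizes the recursion.

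For (2), I will use that $t\mapsto|\alpha|^{-1}\log\coth t$ is strictly decreasing on $(0,\infty)$. Hence $\delta_n\le\varepsilon$ holds if and only if $\coth(2^n\theta)\le e^{|\alpha|\varepsilon}$, which is equivalent to $\tanh(2^n\theta)\ge e^{-|\alpha|\varepsilon}$ and then to $2^n\theta\ge\operatorname{artanh}(e^{-|\alpha|\varepsilon})$. Thus the condition is $n\ge\log_2\bigl(\operatorname{artanh}(e^{-|\alpha|\varepsilon})/\theta\bigr)$. Since $\varepsilon<\delta_0$, this logarithm is positive, because $e^{-|\alpha|\varepsilon}>e^{-|\alpha|\delta_0}$. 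The least nonnegative integer $n$ satisfying the condition is therefore the stated ceiling.
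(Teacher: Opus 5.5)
Your proposal is correct and follows the paper's proof. The paper sets $y_n=|\alpha|\delta_n$ and uses the duplication identity $\cosh(\log\coth y)=\coth(2y)$; this is the same as your tanh doubling formula under the substitution $u_n=e^{-y_n}=\tanh\theta_n$, and part (2) uses the same monotonicity argument. One small correction: positivity of $\delta_n$ comes from $\cosh x>1$ for $x\ne0$, or directly from your induction since $\tanh(2\theta_n)\in(0,1)$, not from any decrease property.
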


\begin{proof}
Put $\beta \coloneqq |\alpha|$ and $y_n \coloneqq \beta\delta_n$.
By \eqref{eq:alpha-gap-recursion}, $y_{n+1}=\log\cosh(y_n)$.
Since $P\ne Q$, we have $\delta_0>0$, so $\theta=\operatorname{artanh}(e^{-\beta\delta_0})$ belongs to $(0,\infty)$ and $y_0=\log\coth\theta$.
The duplication formula $\coth(2y)=\tfrac12(\coth y+\tanh y)$ gives $\cosh(\log\coth y)=\coth(2y)$ for $y>0$.
Induction therefore yields $y_n=\log\coth(2^n\theta)$ for every $n\ge0$, which proves (1).
By (1), $\delta_n \le \varepsilon$ is equivalent to $\tanh(2^{n}\theta) \ge e^{-\beta\varepsilon}$, that is, to $2^{n}\theta \ge \operatorname{artanh}(e^{-\beta\varepsilon})$.
The right-hand side is greater than $\theta$ because $\varepsilon < \delta_0$.
This proves (2).
\end{proof}

For $\alpha = 1$, Proposition~\ref{prop:alpha-family}(4) is Nakamura's theorem.
The reduction in (2) to commuting matrices is the argument used in his proof for $Q_0 = I$.
For $\alpha = 0$, the iteration is stationary after one step.
Proposition~\ref{prop:alpha-family} gives a stronger conclusion than Theorem~\ref{thm:local-convergence} in this example.
By (3), the convergence is quadratic from every initial pair, with the explicit constant $|\alpha|/2$ in terms of $\delta_n$.
The latter is half the largest absolute logarithm of the generalized eigenvalues of the current pair.

\begin{Rem}[Dependence on $\alpha$ and on the eigenvalues]\label{rem:alpha-numerics}
By Proposition~\ref{prop:alpha-family}, the gap is determined by $\beta=|\alpha|$ and the extreme eigenvalues of $P^{-1}Q$ through $\delta_0$.
For $\beta>0$, it satisfies $\delta_{n+1}=h_\beta(\delta_n)$, where $h_\beta(t)=\beta^{-1}\log\cosh(\beta t)$.
The power-mean inequality shows that $h_\beta(t)$ is nondecreasing in $\beta$ for each $t\ge0$.
Since $h_\beta$ is also nondecreasing in $t$, $\delta_n$ is nondecreasing in $|\alpha|$ for every $n$.
Thus smaller $|\alpha|$ gives faster convergence of the gap, and $\alpha=0$ gives $M_g$ in one step.

This comparison does not account for the cost of each step.
For $\alpha=\pm1$, only matrix inversions and additions are needed.
For general $\alpha\ne0,\pm1$, evaluating the matrix powers in \eqref{eq:alpha-means} typically uses a spectral decomposition of $S$, from which $P\#Q$ can be obtained directly.
\end{Rem}

\begin{Rem}[Operator power means and adjoints]\label{rem:operator-power-means}
Let $\mu=\tfrac12(\delta_P+\delta_Q)$.
In the notation of Kim~\cite[Definition~4.1 and Theorem~4.3]{kim-2018},
\[
A_\alpha(P,Q)=P_\alpha(\mu),\qquad
M_\alpha(P,Q)=P_{-\alpha}(\mu),\qquad -1\le\alpha\le1,
\]
where $P_\alpha$ denotes the power mean of Lim and P\'alfia~\cite{lim-palfia-2012}, extended to compactly supported probability measures, and $P_0=\Lambda$ is the Cartan barycenter.
Indeed, for $0<\alpha\le1$, $P_\alpha(\mu)$ is the unique positive-definite solution of
\[
X=\tfrac12(X\#_\alpha P+X\#_\alpha Q),\qquad
X\#_\alpha Z\coloneqq X^{1/2}(X^{-1/2}ZX^{-1/2})^\alpha X^{1/2}.
\]
It suffices by congruence equivariance to substitute $X=f_\alpha(S)$ for $(P,Q)=(I,S)$.
The negative parameters follow from inversion duality, and at $\alpha=0$ both sides equal $\Lambda(\mu)=P\#Q$.

For a binary mean $N$, define its adjoint by
$N^*(P,Q)\coloneqq[N(P^{-1},Q^{-1})]^{-1}$.
The identity $f_\alpha(s)=s f_\alpha(1/s)$ gives
\[
f_\alpha^*(s)\coloneqq\frac{1}{f_\alpha(1/s)}
=\frac{s}{f_\alpha(s)}=f_{-\alpha}(s).
\]
Thus $M_\alpha=A_\alpha^*$.
Here the metric duality of the connections corresponds to the adjoint operation on means.
This identity holds for every real $\alpha$.
More generally, if $N$ is any symmetric Kubo--Ando mean with representing function $f$, then $f^*(s)=s/f(s)$, so the same commuting functional-calculus calculation as above yields
\[ N(P,Q)\#N^*(P,Q)=P\#Q. \]
Theorem~\ref{thm:symmetry} gives a geometric explanation of this identity for the power-mean connections.
The limit $P_\alpha(\mu)\to\Lambda(\mu)$ as $\alpha \to 0$ in~\cite{lim-palfia-2012,kim-2018} concerns variation of $\alpha$.
In Proposition~\ref{prop:alpha-family}, we iterate $(A_\alpha,A_{-\alpha})$ for a fixed $\alpha$.
\end{Rem}

\begin{Rem}[Quasi-arithmetic power means and pullback metrics]\label{rem:power-pullback-metrics}
For $\mu=\tfrac12(\delta_P+\delta_Q)$, Kim's quasi-arithmetic means are
\[
Q_\alpha(\mu)=\left(\frac{P^\alpha+Q^\alpha}{2}\right)^{1/\alpha}
\quad(\alpha\ne0),\qquad
Q_0(\mu)=\exp\left(\frac{\log P+\log Q}{2}\right),
\]
with $Q_0$ the limit as $\alpha\to0$~\cite[Theorem~5.2]{kim-2018}.
They coincide with $A_\alpha(P,Q)$ when $P,Q$ commute or $\alpha=\pm1$, but differ in general.
Their opposite-parameter pair need not preserve $P\#Q$ either.
For example, for
\[
P=\begin{pmatrix}1&0\\0&4\end{pmatrix},\qquad
Q=\begin{pmatrix}5&4\\4&5\end{pmatrix},
\]
direct calculation gives
$Q_{1/2}(\mu)\#Q_{-1/2}(\mu)\ne P\#Q$.

Hiai and Petz~\cite[Theorem~2.1]{hiai-petz-2009} realize $Q_\alpha(\mu)$ as the geodesic midpoint of the Euclidean metric pulled back by $F_\alpha(D)=D^\alpha/\alpha$ for $\alpha\ne0$, and by $F_0(D)=\log D$ at $\alpha=0$.
Their parameter is $\theta=2-2\alpha$; $\alpha=0$ gives the log-Euclidean metric.
Here, by contrast, the affine-invariant metric $g$ is fixed and the connection varies with $\alpha$, with $A_0(P,Q)=P\#Q$.
\end{Rem}

\section{A Hessian counterexample on SPD(2)}\label{sec:counterexample}

Write a matrix in $\SPD(2)$ as
\[ X = \begin{pmatrix}x&z\\z&y\end{pmatrix}, \qquad x > 0, \quad y > 0, \quad xy-z^2 > 0. \]
Consider the Hessian potential and its associated metric
\[ \Phi(x,y,z)  =  \frac{x^4}{12} + \frac{y^4}{12} + \frac{z^2}{2}, \qquad g = \operatorname{Hess}\Phi  =  x^2\,dx^2 + y^2\,dy^2 + dz^2. \]
Let $\nabla^E$ be the flat connection in the coordinates $(x,y,z)$.
Since $g$ is Hessian with respect to $\nabla^E$, its dual connection has affine coordinates
\[ \eta  =  D\Phi  =  \left(\frac{x^3}{3},\frac{y^3}{3},z\right). \]
The potential $\Phi$ is smooth and strictly convex on all of $\mathbb{R}^{3}$, and its gradient $D\Phi$ is a bijection of $\mathbb{R}^{3}$.
Thus $\Phi$ is of Legendre type in the sense of Rockafellar~\cite{rockafellar1967conjugates}.
The Legendre map $\eta=D\Phi$ is a homeomorphism of $\mathbb{R}^{3}$ onto itself, with inverse $D\Phi^{*}$, where $\Phi^{*}$ is the Legendre conjugate of $\Phi$.
Its derivative is $\operatorname{diag}(x^2,y^2,1)$, so its restriction to $\SPD(2)$ is a smooth diffeomorphism onto its image.
It maps $\SPD(2)$ onto
\[ \eta\left(\SPD(2)\right) = \left\{(u,v,w) \in \mathbb{R}^3 : u>0,\ v>0,\ w^6<9uv \right\}. \]
The restriction of $\Phi$ to $\SPD(2)$ is not of Legendre type, since $D\Phi$ stays bounded at the boundary points of $\SPD(2)$.
Thus the convexity of this image does not follow from the general theory of Legendre-type functions~\cite{rockafellar1967conjugates}.
We verify it directly.
The image is the strict epigraph $v>f(u,w)$ of
$f(u,w) = w^6/(9u)$ on $u>0$.
The diagonal entries of $\operatorname{Hess} f$ are nonnegative, and
\[ \det\left(\operatorname{Hess} f\right)=\frac{8w^{10}}{27u^4} \ge 0. \]
Thus $f$ is convex, and hence $\eta\left(\SPD(2)\right)$ is convex.
Consequently,
\[ (1-t)\eta(P)+t\eta(Q) \in \eta\left(\SPD(2)\right), \qquad P,Q\in\SPD(2),\quad 0 \le t \le 1. \]
Accordingly, the dual midpoint mean is globally well-defined and is
\[ M(P,Q) =  \eta^{-1}\left(\frac{\eta(P) + \eta(Q)}{2}\right). \]

Here $D\Phi$ denotes the coordinate gradient, or Legendre map, in the $\nabla^{E}$-affine coordinates.
It should not be confused with the affine connection $\nabla^{E}$.
Thus $M$ is the equal-weight vector quasi-arithmetic mean generated by $D\Phi$.  
Equivalently, it is the midpoint of the $\nabla^{*}$-geodesic, since
$\eta = D\Phi$ is a $\nabla^{*}$-affine coordinate system.

This representation has the usual affine gauge freedom of a Hessian structure.
For the affine and scaling degrees of freedom of dual Hessian--Legendre representations, see \cite[Eq.~(3) and Section 3]{nielsen-2026-legendre}. 

If
\[ \widetilde{\Phi}(\xi)  =  \Phi(\xi) + \langle a,\xi\rangle  +  b,  \qquad \xi = (x,y,z), \]
then
\[ \mathrm{Hess}\,\widetilde{\Phi}  =  \mathrm{Hess}\,\Phi = g, \qquad D\widetilde{\Phi} = D\Phi + a, \]
and therefore
\[ \left(D\widetilde{\Phi}\right)^{-1}\left(\frac{D\widetilde{\Phi}(P) + D\widetilde{\Phi}(Q)}{2}\right)  =  \left(D\Phi\right)^{-1}\left(\frac{D\Phi(P) + D\Phi(Q)}{2}\right)  =  M(P,Q). \]
More generally, replacing the dual affine coordinates by
$\widetilde{\eta}  =  B\eta  +  c$, with $B \in\mathrm{GL}(3,\mathbb{R})$, does not change the point represented by their arithmetic midpoint:
\[ \widetilde{\eta}^{-1} \left(\frac{\widetilde{\eta}(P) + \widetilde{\eta}(Q)}{2}\right)  =  \eta^{-1}\left(\frac{\eta(P) + \eta(Q)}{2}\right).\]
Thus this gauge freedom changes the affine representation and leaves the geometrically determined dual midpoint mean unchanged.
Multiplying $\Phi$ by a positive constant also leaves the quasi-arithmetic mean unchanged.
However, it rescales the metric by that constant, so it is not a gauge transformation when $g$ is fixed.
In the fixed-$g$ setting considered here, the gauge freedoms are addition of an affine function to the potential and compatible affine changes of the primal and dual affine coordinates.
Multiplication of the potential by a positive constant is a homothetic rescaling of the metric.
The parameter $\alpha$ is not a gauge parameter, since changing $\alpha$ changes the affine connection and its midpoint mean.

In particular, the factors $1/3$ in the first two components of $\eta$ do not affect the resulting midpoint.

For $s,t>0$, restricting to the ray of scalar matrices gives
\[ A(sI_2,tI_2) = \frac{s+t}{2}I_2, \qquad M(sI_2,tI_2) = \left(\frac{s^3+t^3}{2}\right)^{1/3}I_2. \]

In order to identify the ambient Levi--Civita midpoint, introduce the coordinates
\[ u=\frac{x^2}{2},\quad v=\frac{y^2}{2},\quad w=z.\]
In these coordinates, the metric becomes Euclidean:
$g = du^2+dv^2+dw^2$. 
Moreover, the scalar ray is represented by
$rI_2 \mapsto \left(\frac{r^2}{2},\frac{r^2}{2},0\right)$. 
The Euclidean segment joining the images of $sI_2$ and $tI_2$
remains in the coordinate image and lies entirely in the scalar ray.
Consequently, this segment is also the ambient Levi--Civita
geodesic. Its midpoint has coordinates $\left(  \frac{s^2+t^2}{4},  \frac{s^2+t^2}{4},  0\right)$,
and therefore
$M_{g} (sI_2,tI_2) = \sqrt{\frac{s^2+t^2}{2}}\,I_2$. 
Thus, on the scalar ray, $A$, $M$, and $M_g$ are the power means of orders $1$, $3$, and $2$, respectively.

For the initial matrices $P_0 = I_2,\ Q_0 = 2I_2$,
write
$ P_n = a_n I_2, \quad Q_n = b_n I_2$.
The arithmetic--dual-mean iteration then reduces to
\[ a_{n+1} = \frac{a_n+b_n}{2},\quad b_{n+1}  = \left(\frac{a_n^3+b_n^3}{2}\right)^{1/3},\quad (a_0,b_0)=(1,2).\]

For $0 < a < b$, the strict power-mean inequality yields $a < \frac{a+b}{2} < \left(\frac{a^3+b^3}{2}\right)^{1/3} < b$. 
Consequently, $a_n < a_{n+1} < b_{n+1} < b_n$
whenever $a_n<b_n$. Thus, $(a_n)$ is increasing and
$(b_n)$ is decreasing, and there exist numbers
$\alpha \le \beta$ such that
$a_n \to \alpha, \ b_n \to \beta, n \to \infty$. 
Passing to the limit in the first recurrence relation yields $\alpha = \frac{\alpha+\beta}{2}$, and hence $\alpha=\beta$. 
Denoting their common value by $L$,
we obtain
\[  (P_n,Q_n) \to (LI_2,LI_2), \ n \to \infty, \quad (A\otimes M)(I_2,2I_2) = LI_2.\]

It remains to compare $L$ with the Levi--Civita midpoint.
The first two iterates satisfy $a_1 = \frac{3}{2}, \  b_1^3 = \frac{9}{2}$,
and therefore
\[ b_2^3 = \frac{a_1^3+b_1^3}{2} = \frac12\left(\frac{27}{8}+\frac{9}{2}\right) = \frac{63}{16}.\]
Since
\[b_2^6 = \frac{3969}{256} <  \frac{4000}{256} =  \left(\sqrt{\frac52}\right)^6,\]
we obtain $L \leq b_2<\sqrt{\frac{5}{2}}$.

On the other hand, $M_g(I_2,2I_2) = \sqrt{\frac{5}{2}}\,I_2$.
Therefore,
\[ (A\otimes M)(I_2,2I_2) = LI_2 \ne \sqrt{\frac{5}{2}}\,I_2 = M_g(I_2,2I_2). \]
Numerically,
\[ L\approx1.5772868875, \quad \sqrt{\frac{5}{2}}\approx1.5811388301.\]

This example satisfies the dualistic relation $\nabla^*  =  2\nabla^g - \nabla^E$.
Thus $A$ and $M$ have not been chosen independently.
The example shows that duality alone does not imply that the Gauss composition equals the Levi--Civita midpoint.
Moreover, one-step midpoint invariance fails for every pair of distinct scalar matrices, including arbitrarily close pairs.
Write $s=r-a$, $t=r+a$, where $0<a<r$, and use $A$, $M$, and $M_g$ also for their scalar restrictions.
Then
\[ A(s,t)=r, \qquad M(s,t)=r\left(1+\frac{3a^2}{r^2}\right)^{1/3}. \]
The strict concavity of $u\mapsto u^{2/3}$ on $(0,\infty)$ gives
\[
\begin{aligned}
M_g\left(A(s,t),M(s,t)\right)^2
&=\frac{r^2}{2}\left[1+\left(1+\frac{3a^2}{r^2}\right)^{2/3}\right]\\
&<r^2+a^2=M_g(s,t)^2.
\end{aligned}
\]
Thus dual flatness does not imply local midpoint invariance.
For any such pair, there is no isometry and invariant domain satisfying all the hypotheses of Theorem~\ref{thm:symmetry}.
Indeed, the theorem would imply the one-step invariance, which fails for this pair.

\section{A Euclidean metric with a parallel cubic form}\label{sec:parallel-cubic}

In this section, we give a construction valid in every dimension $d \ge 2$.
It includes both flat and nonflat examples.
The nonflat examples show that the point-symmetry criterion also applies to structures which are not dually flat.
Let $\mathcal{P} = \mathbb{R}^{d}$ carry the Euclidean metric $g = \langle\cdot,\cdot\rangle$, and let $C$ be a nonzero totally symmetric trilinear form with constant coefficients, or equivalently, a $\nabla^{g}$-parallel cubic form.
Define the constant symmetric bilinear map $K$ by
\[  \langle K(X,Y),Z\rangle = C(X,Y,Z),\]
and set
\[  \nabla \coloneqq \nabla^{E} + K,
  \qquad
  \nabla^{*} \coloneqq \nabla^{E} - K,\]
where $\nabla^{E}=\nabla^g$ is the flat Euclidean connection.
The total symmetry of $C$ makes both connections torsion-free and shows that $\nabla^{*}$ is the $g$-dual of $\nabla$.
More explicitly,
\[  (\nabla_X g)(Y,Z) = -2C(X,Y,Z),\]
which is totally symmetric.
Thus $(\mathbb{R}^{d}, g, \nabla)$ is a statistical manifold in the sense of Lauritzen~\cite{lauritzen-1987}.
The average connection of the dual pair is
\[ \overline{\nabla}  = \frac{1}{2} (\nabla+\nabla^{*}) = \nabla^{E} = \nabla^{g},\]
in accordance with Section~\ref{sec:setting}.
Consequently,  $M_g (P,Q)=\frac{P+Q}{2}$, 
while $K = \nabla-\nabla^g$ is exactly the tensor of Remark~\ref{rem:symmetry}(3).

For constant vector fields, writing $K_X\coloneqq K(X,\cdot)$, 
we have
\[  R^{\nabla}(X,Y)Z = [K_X,K_Y]Z = R^{\nabla^{*}}(X,Y)Z. \]
Thus $\nabla$ and $\nabla^{*}$ are flat if and only if
\[  [K_X,K_Y] = 0 \quad  \text{for all } X,Y\in\mathbb{R}^{d}.\]
On $\mathbb{R}^{2}$, define $C$, up to total symmetry, by
\[
  \begin{aligned}
  C(\partial_x,\partial_x,\partial_y)&=1,\\
  C(\partial_x,\partial_x,\partial_x)
  &=C(\partial_x,\partial_y,\partial_y)
   =C(\partial_y,\partial_y,\partial_y) = 0.
  \end{aligned}
\]
Then
\[ K(\partial_x,\partial_x)=\partial_y,  \qquad  K(\partial_x,\partial_y)=\partial_x,  \qquad K(\partial_y,\partial_y)=0.\]
The geodesic equations are
\[  \ddot{x}=-2\dot{x}\dot{y}, \qquad  \ddot{y}=-\dot{x}^{2}  \qquad \text{for }\nabla,\]
and
\[ \ddot{x} = 2\dot{x}\dot{y}, \qquad  \ddot{y} = \dot{x}^{2} \qquad \text{for }\nabla^{*}.\]
Moreover,
\[  R^{\nabla}(\partial_x,\partial_y)\partial_y  = -K(\partial_y,K(\partial_x,\partial_y)) = -\partial_x \ne 0.\]
For $d > 2$, extend this cubic form by zero whenever one of its arguments belongs to the orthogonal complement of $\operatorname{span}\{\partial_x,\partial_y\}$.
The same nonzero curvature component remains, giving a nonflat example in every dimension $d \ge 2$.
For such a nonflat choice, neither $\nabla$ nor $\nabla^{*}$ admits local affine coordinates in which its connection coefficients vanish on an open set.
Thus the affine-coordinate quasi-arithmetic representations of Section~\ref{sec:setting} do not exist for $A$ and $M$.

Fix $P \ne Q$, set $o\coloneqq M_g(P,Q)=\frac{P+Q}{2}$,
and let $s(x) \coloneqq 2o-x$  be the Euclidean point reflection at $o$.
Then $s$ is a global isometry of $g$ with
\[  s(o)=o,\qquad ds\equiv-\operatorname{id}, \qquad  \operatorname{Fix}(s) = \{o\}.\]
Thus conditions (a) and (c) of Theorem~\ref{thm:symmetry} hold on every $s$-invariant domain containing $o$.
Moreover, $s$ preserves $\nabla^E$ and, since $ds=-\operatorname{id}$, we have $s^*C=-C$ and, equivalently, $s_*K=-K$.
Therefore, $s_*\nabla=\nabla^E-K=\nabla^{*}$, which is condition (b).

The same argument verifies condition (b) on a connected Riemannian symmetric space with a $\nabla^g$-parallel cubic form $C$.
Indeed, if $s_o$ denotes the geodesic symmetry at $o$, then $\left.(s_o^*C)\right|_o = -C_o$  because $d_o s_o = -\operatorname{id}$.
Since $s_o$ is an isometry, both $s_o^*C$ and $-C$ are $\nabla^g$-parallel, and hence they agree on the connected component of $o$.
Thus $s_{o*}K = -K$ and $s_{o*}\nabla=\nabla^*$.
The resulting invariance statement is local for sufficiently close pairs unless the global domain and fixed-point assumptions of Theorem~\ref{thm:symmetry} are verified separately.
The same change of sign is used in Section~\ref{sec:nakamura}.

We next construct a domain on which the required local midpoint branches are uniquely defined.
Let
\[  \kappa\coloneqq\max_{\|v\|=1}\|K(v,v)\| > 0.\]
Along every nonconstant $\nabla$- or $\nabla^{*}$-geodesic,
\[  \frac{d^2}{dt^2}\|\gamma-o\|^2 = 2\|\dot\gamma\|^2  \mp2\langle K(\dot\gamma,\dot\gamma),\gamma-o\rangle   \ge  2\|\dot\gamma\|^2\left(1-\kappa\|\gamma-o\|\right),\]
where the upper and lower signs correspond to $\nabla$ and $\nabla^*$, respectively.
Hence $\|\gamma-o\|^2$ is strictly convex on every interval on which $\|\gamma-o\|<\kappa^{-1}$.

For an affine connection $D$, we call an open set $V$ a \emph{$D$-convex normal neighborhood} if any two points of $V$ are joined by a unique $D$-geodesic whose segment is contained in $V$, and $V$ is a normal neighborhood of each of its points with respect to $D$.
Unlike the strongly convex normal neighborhoods of Section~\ref{sec:GC}, no minimality is required here: an affine connection alone does not determine a length functional.
Every point has arbitrarily small $D$-convex normal neighborhoods~\cite{whitehead-1932-convex,kobayashi-nomizu-1963}.
By this existence and translation invariance, there exists $\rho > 0$, depending only on $C$, such that for every $o\in\mathbb R^d$ and each $D \in \{\nabla,\nabla^*,\nabla^E\}$, 
there is a $D$-convex normal neighborhood $V_o^D$ satisfying
$B_\rho(o) \subset V_o^D \subset B_{\kappa^{-1}}(o)$. 
Put $\delta\coloneqq\|P-Q\|$ and assume that $0<\delta<\rho$.
The endpoints $P$ and $Q$ belong to every $V_o^D$.
For $D = \nabla$ or $\nabla^*$, strict convexity along the unique $D$-geodesic in $V_o^D$ joining $P$ and $Q$ gives
$\|\gamma(t)-o\|\leq\frac{\delta}{2}$ for $0 \le  t \le 1$. 
Thus this segment is contained in the $s$-invariant ball $U_{P,Q}\coloneqq B_\delta(o)$, 
and it is the unique $D$-geodesic joining $P$ and $Q$ that remains in $U_{P,Q}$.
The Euclidean segments defining the two required Riemannian midpoints also remain in $U_{P,Q}$.

The preceding argument establishes uniqueness for geodesic segments contained in $U_{P,Q}$.
It does not rule out ambient geodesics that may leave $U_{P,Q}$.
Accordingly, we apply Theorem~\ref{thm:symmetry} to the metric and connections restricted to $U_{P,Q}$.
Denoting the resulting local midpoint branches again by $A$, $M$, and $M_g$, we obtain
\begin{equation}\label{eq:sum-invariance}
  M_g\left(A(P,Q),M(P,Q)\right)  = M_g(P,Q), \ \text{i.e.,}\   A(P,Q)+M(P,Q) = P+Q.
\end{equation}
This identity follows because the point reflection $s$ exchanges the two local dual midpoints.
In real dimension two, \eqref{eq:sum-invariance} has the same form as the holomorphic Matkowski--Sut\^{o} equation on $\mathbb C$.
Here the midpoint maps are constrained by metric duality, whereas holomorphic quasi-arithmetic means arise from local flat affine coordinates and need not satisfy this duality condition.
Thus our non-flat examples give a different geometric realization of the same invariance equation.
Identity~\eqref{eq:sum-invariance} is a multidimensional invariance equation of Matkowski--Sut\^{o} type.
Here it is satisfied by midpoint maps which need not have an affine-coordinate quasi-arithmetic representation. 
The scalar equation~\eqref{eq:MS} and its complete solution are recalled in Section~\ref{sec:one-dim}.

For every fixed $\alpha\in\mathbb R$, let
\[  \nabla^{(\alpha)}=\nabla^E+\alpha K,\quad  A_\alpha\coloneqq M_{\nabla^{(\alpha)}}, \quad  M_\alpha\coloneqq M_{\nabla^{(-\alpha)}}.\]
Remark~\ref{rem:symmetry}(3) and the same reflection give $A_\alpha(P,Q)+M_\alpha(P,Q) = P+Q$
whenever the corresponding local midpoint branches are defined on a suitable invariant neighborhood.
The admissible neighborhood may depend on $\alpha$, and a common neighborhood can be chosen when $\alpha$ ranges over a fixed bounded set.

The Gauss composition is also well defined for sufficiently close pairs.
Put $h\coloneqq Q-P$.
In order to obtain the local expansions, we write the relevant geodesic as
\[ \gamma(t) = P + th + u(t), \quad u(0) = u(1) = 0.\]
The geodesic equation reads $\ddot{\gamma} = \mp K(\dot{\gamma},\dot{\gamma})$, where the upper and lower signs again correspond to $\nabla$ and $\nabla^{*}$.

We first show the a priori estimate
\[ \|u\|_{C^{1}([0,1])} = O\left(\|h\|^{2}\right). \]
Indeed, $\dot{\gamma}(0) = \left(\operatorname{Exp}^{D}_{P}\right)^{-1}(Q)$ for the corresponding connection $D \in \{\nabla, \nabla^{*}\}$ depends smoothly on $Q$ and vanishes at $Q = P$, so $\|\dot{\gamma}(0)\| = O(\|h\|)$. 
By the differential inequality $\tfrac{d}{dt}\|\dot{\gamma}\| \le \|K(\dot{\gamma},\dot{\gamma})\| \le 2\kappa \|\dot{\gamma}\|^{2}$, the bound $\|\dot{\gamma}(t)\| = O(\|h\|)$ holds on $[0,1]$ for sufficiently small $\|h\|$.
Hence $\|\ddot{u}(t)\| = \|K(\dot{\gamma},\dot{\gamma})(t)\| = O\left(\|h\|^{2} \right)$.
Since $u(0) = u(1) = 0$, we have $\int_{0}^{1}\dot{u}(s)\,ds = 0$.
Writing $\dot{u}(t) = \int_{0}^{1}\left(\dot{u}(t) - \dot{u}(s)\right)ds$, we obtain $\|\dot{u}\|_{\infty} = O(\|h\|^{2})$.
Then $u(t) = \int_{0}^{t}\dot{u}(s)\,ds$ gives $\|u\|_{\infty} = O\left(\|h\|^{2} \right)$.
Substituting $\dot{\gamma} = h + \dot{u}$ into the geodesic equation, 
the cross terms satisfy $2K(h,\dot{u}) = O\left(\|h\|^{3}\right)$ and $K(\dot{u},\dot{u}) = O\left(\|h\|^{4}\right)$, so
\[  \ddot{u}(t) = \mp K(h,h) + O\left(\|h\|^3\right).\]

Solving the leading boundary-value problem $\ddot{u}_{0}(t) = \mp K(h,h)$ with $u_{0}(0) = u_{0}(1) = 0$ by two integrations gives
\[ u_{0}(t) = \pm\frac{1}{2}\,t(1-t)\,K(h,h), \qquad u_{0}\left(\frac{1}{2}\right) = \pm\frac{1}{8}\,K(h,h). \]
The $O\left(\|h\|^{3}\right)$ remainder in $\ddot{u}$ contributes $O\left(\|h\|^{3}\right)$ to $u$.
Indeed, $v \coloneqq u - u_{0}$ satisfies $\ddot{v} = O\left(\|h\|^{3}\right)$ with $v(0) = v(1) = 0$.
The Green function of this two-point Dirichlet problem on $[0,1]$ is bounded; see, e.g., \cite[Chapter~7]{walter-1998-ode}.
Hence
\[ u\left(\frac{1}{2}\right) = \pm\frac{1}{8} K(h,h) + O\left(\|h\|^3 \right).\]

Since the local midpoint branches are smooth and \eqref{eq:sum-invariance} holds for every sufficiently close pair, Theorem~\ref{thm:local-existence} gives local quadratic convergence of the mean-type iteration.
By Theorem~\ref{thm:local-convergence}, we can identify the limit.
There exists $\varepsilon>0$ such that
\[ (A\otimes M)(P,Q) = M_g(P,Q) =\frac{P+Q}{2}, \qquad (P,Q) \in \Omega_{\varepsilon}, \]
where
\[ \Omega_{\varepsilon} \coloneqq  \left\{(P,Q)\in\mathbb{R}^d \times \mathbb{R}^d \colon \|P-Q\|<\varepsilon\right\}. \]
Here $\varepsilon$ and the constants in the convergence estimates do not depend on the center $o$, because the structure is invariant under translations.
The computation above gives the leading quadratic term explicitly:
\[  A(P,Q) = o + \frac{1}{8} K(h,h)+ O\left(\|h\|^3 \right), \qquad M(P,Q)  = o - \frac{1}{8} K(h,h)+O\left(\|h\|^3 \right),\]
so that $\|A(P,Q)-M(P,Q)\|\le\left(\frac{\kappa}{4}+O(\|h\|)\right)\|h\|^{2}$ with $\kappa$ as above.
It also shows directly that, by the invariance \eqref{eq:sum-invariance}, the arithmetic mean $\frac{P_n+Q_n}{2}=o$ is fixed along the iteration.
This is how Theorem~\ref{thm:local-convergence} applies in this example.

The relation to the other examples is as follows.
If $\nabla$ is flat, that is, if the operators $K_X$ commute, the structure splits into an orthogonal product of one-dimensional structures and reduces, up to gauge, to products of the scalar structures classified in Section~\ref{sec:one-dim}. 
See Remark~\ref{rem:examples-revisited}(5) for the precise statement.
However, when the operators $K_X$ do not commute, the structure is neither dually flat nor a product of one-dimensional structures.
In this case,~\eqref{eq:sum-invariance} does not follow from the one-dimensional classification of Section~\ref{sec:one-dim}.
It follows from point symmetry alone.
Together with Section~\ref{sec:counterexample}, this shows that dual flatness is neither sufficient nor necessary for the local invariance identity.
The hypothesis used here is the existence of the symmetry in Theorem~\ref{thm:symmetry}.
Conversely, one may ask whether every Euclidean dual pair with the local invariance identity arises from a constant cubic form in this way.
We state this as Conjecture~\ref{conj:rigidity} in Section~\ref{sec:conjecture}.

\section{The one-dimensional case}\label{sec:one-dim}

In this section, we give a complete characterization of the invariance identity \eqref{eq:Mg-inv} of Theorem~\ref{thm:invariance} in dimension one.
We first give a normal form for one-dimensional dualistic structures which represents the three midpoint means as quasi-arithmetic means.
We then apply the solution of the {\it Matkowski--Sut\^{o} problem} in the theory of functional equations.
All statements in this section are known or follow from known results by short arguments.
We include them to explain how the scalar computations of Sections~\ref{sec:nakamura} and~\ref{sec:counterexample} fit into the complete one-dimensional classification.
Together with Theorem~\ref{thm:restriction} below, this classification also gives necessary conditions in arbitrary dimension.
These complement the sufficient criterion of Section~\ref{sec:symmetry}.

Recall that the quasi-arithmetic mean generated by a continuous strictly monotone real function $f$ on an interval is
\[ M_f(p,q) \coloneqq f^{-1}\left(\frac{f(p) + f(q)}{2}\right), \]
and that $M_{f_1} = M_{f_2}$ if and only if $f_2 = \alpha f_1 + \beta$ with $\alpha \ne  0$~\cite{hardy-littlewood-polya-1952}. 
By Section~\ref{sec:setting}, the midpoint mean of a flat torsion-free connection is quasi-arithmetic in any affine coordinate.

The Riemannian part of the normal form below is closely related to Nielsen's description of one-dimensional Karcher quasi-arithmetic means~\cite[Propositions~1 and~2]{nielsen-2025-frechet}.
Indeed, for a one-dimensional metric $g = w(t)\,dt^2$, the arc-length coordinate
$s(t) = \int^{t} \sqrt{w(\tau)}\,d\tau$ gives $d_g(p,q) = |s(p)-s(q)|$.
Consequently, the Karcher midpoint is the quasi-arithmetic mean
\[ M_g(p,q) = M_s(p,q) =  s^{-1}\left(\frac{s(p)+s(q)}{2}\right).\]
For the Hessian realization $w = \Phi^{\prime\prime}$, 
Nielsen also expresses this same Karcher midpoint in Legendre-dual coordinates. 
These dual expressions represent the same Riemannian midpoint.
They should be distinguished from the two generally distinct affine midpoint maps $M_\theta$ and $M_\eta$ associated here with $\nabla$ and $\nabla^*$, respectively.
In the following lemma, we describe all three affine coordinates $\theta$, $\eta$, and $s$, and give the duality relation $\theta'\eta'=(s')^2$.

Throughout this section, $I \subset \mathbb{R}$ is an open interval equipped with a Riemannian metric $g = w(t)\,dt^2$, 
where $w > 0$ is smooth, and an affine connection $\nabla$ with Christoffel symbol $\Gamma$, that is, $\nabla_{\partial_t}\partial_t = \Gamma(t)\,\partial_t$. 
In dimension one, every affine connection is torsion-free and flat, because the torsion and curvature tensors are antisymmetric in two of their arguments. 
Moreover, as shown below, any two points of $I$ are joined by a unique $\nabla$-geodesic up to affine reparametrization, so the standing assumptions of Section~\ref{sec:setting} hold automatically.

Here $\int^{t} \Gamma(\tau)\,d\tau$ denotes any fixed primitive function of $\Gamma$ on $I$; different choices change $\theta, \eta, s$ only by affine transformations.

\begin{Lem}[One-dimensional normal form]\label{lem:normal-form}
Let $\theta, \eta, s \colon I \to \mathbb{R}$ be primitive functions of
\[ \theta'(t) = \exp\left(\int^{t}\Gamma(\tau)\,d\tau\right), \qquad \eta'(t) = \frac{w(t)}{\theta'(t)}, \qquad s'(t) = \sqrt{w(t)}, \qquad t \in I. \]
Choose primitive functions satisfying the displayed normalizations. 
Their additive constants are arbitrary, and more general affine changes of the generators do not alter the associated quasi-arithmetic means. 
Then, \\
(i) The $g$-dual and Levi--Civita connections have Christoffel symbols
\[ \Gamma^{*}(t) = \frac{w'(t)}{w(t)} - \Gamma(t), \qquad \Gamma^{g}(t) = \frac{w'(t)}{2w(t)}. \]
(ii) $\theta$, $\eta$, and $s$ are affine coordinates for $\nabla$, $\nabla^{*}$, and $\nabla^{g}$, respectively, and
\[ A = M_{\theta}, \qquad M = M_{\eta}, \qquad M_g = M_{s} \qquad \text{on } I \times I. \]
(iii) With the above normalization, $\theta'\eta'  =  w  =  (s')^{2}$.\\
(iv) In the $\nabla$-affine coordinate $x = \theta(t)$, the metric takes the Hessian form $g = \Phi''(x)\,dx^{2}$ with $\eta = \Phi'\circ\theta$, for a strictly convex potential $\Phi$ determined up to the affine gauge of Section~\ref{sec:counterexample}. 
Thus every one-dimensional dualistic structure is dually flat, i.e.\ Hessian~\cite{AmariNagaoka2000,shima-2007}.
\end{Lem}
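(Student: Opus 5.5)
The plan is to verify each item by direct one-dimensional computation, starting from the duality identity \eqref{eq:duality} and the geodesic equation $\ddot t+\Gamma(t)\dot t^{2}=0$.

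For (i), we apply \eqref{eq:duality} with $X=Y=Z=\partial_t$. With $g(\partial_t,\partial_t)=w$ this gives $w'=\Gamma w+\Gamma^{*}w$, hence $\Gamma^{*}=w'/w-\Gamma$. In dimension one every connection is torsion-free, so \eqref{eq:dual-average} yields $\Gamma^{g}=\tfrac12(\Gamma+\Gamma^{*})=w'/(2w)$. Alternatively, the Koszul formula gives the same value directly.

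For (ii), the key observation is the following. A coordinate $\phi$ with $\phi'>0$ is affine for the connection with symbol $\Gamma_D$ if and only if $\phi''=\Gamma_D\phi'$. To see this, write $y=\phi(t)$. Then
\[
\ddot y=\phi''\dot t^{2}+\phi'\ddot t=(\phi''-\Gamma_D\phi')\dot t^{2}
\]
along $D$-geodesics. So $D$-geodesics are exactly the curves affine in $y$ if and only if $\phi''=\Gamma_D\phi'$. This condition holds in each of the three cases:
\begin{itemize}
\item[] for $\theta$, by construction, $\theta''=\Gamma\theta'$;
\item[] for $\eta$, we compute $\eta''/\eta'=w'/w-\theta''/\theta'=w'/w-\Gamma=\Gamma^{*}$;
\item[] for $s$, we compute $s''/s'=w'/(2w)=\Gamma^{g}$.
\end{itemize}
Since $\theta'$, $\eta'$ and $s'$ are all positive, each of $\theta,\eta,s$ is a diffeomorphism of $I$ onto an interval. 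In the affine coordinate the geodesic joining two points is the unique affinely parametrized segment, and its image stays in that interval because the interval is convex. This gives global existence and uniqueness of the joining geodesic, and the midpoint is then the quasi-arithmetic mean $M_\phi$. I expect this to be the only step needing care: one must check that uniqueness holds in the sense of Definition~\ref{def:midpoint-mean}, not merely locally. Finally, affine changes of a generator leave $M_\phi$ unchanged, which covers the stated freedom in the primitives.

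Item (iii) is immediate from the definitions: $\theta'\eta'=w=(s')^{2}$.

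For (iv), we set $x=\theta(t)$ and define $\Phi$ on $\theta(I)$ by $\Phi'(x)=\eta(\theta^{-1}(x))$. Then
\[
\Phi''(x)=\frac{\eta'}{\theta'}, \qquad g=w\,dt^{2}=w\,\frac{dx^{2}}{\theta'^{2}}=\frac{\eta'}{\theta'}\,dx^{2}=\Phi''(x)\,dx^{2}.
\]
Since $\Phi''=w/\theta'^{2}>0$, the potential $\Phi$ is strictly convex. The choice of primitive for $\Phi'$ and the additive constant in $\Phi$ correspond exactly to the affine gauge of Section~\ref{sec:counterexample}.

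Dual flatness holds because both $\nabla$ and $\nabla^{*}$ possess the global affine coordinates $\theta$ and $\eta$. Together with the Hessian form just obtained, this proves the final claim.
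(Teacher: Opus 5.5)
Your proof is correct and follows essentially the paper's route. You use the duality identity on $\partial_t$ for (i), the affine-coordinate criterion $\phi''=\Gamma_D\phi'$ (checked via logarithmic derivatives) with global uniqueness of the straight segment for (ii), and the chain-rule computation $\Phi''=\eta'/\theta'=w/\theta'^{2}$ for (iv). The only cosmetic difference is that you derive $\Gamma^{g}$ from \eqref{eq:dual-average}, which is valid because every one-dimensional connection is torsion-free, whereas the paper quotes the standard Levi--Civita formula and then deduces $\nabla^{*}=2\nabla^{g}-\nabla$.
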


\begin{proof}
A curve $u \mapsto t(u)$ in $I$ is a $\nabla$-geodesic if and only if $\ddot{t}(u) + \Gamma(t(u))\,\dot{t}(u)^{2} = 0$, so for smooth $f$, along any $\nabla$-geodesic,
\[ \frac{d^{2}}{du^{2}}\,f(t(u)) = f''(t(u))\,\dot{t}(u)^{2} + f'(t(u))\,\ddot{t}(u) = \left(f''-\Gamma f'\right)(t(u))\,\dot{t}(u)^{2}. \]
Hence $f$ is an affine coordinate for the connection with symbol $\Gamma$ if and only if $f'' = \Gamma f'$ on $I$.
This is equivalent to $f'$ being a nonzero multiple of $\exp\left(\int^{t}\Gamma(\tau)\,d\tau\right)$.
We call this equivalence the \emph{affine-coordinate criterion}.
Affine coordinates are unique up to affine transformations. 
Evaluating the duality identity \eqref{eq:duality} on the coordinate frame ($X = Y = Z = \partial_t$) gives $w'(t) = \left(\Gamma(t) + \Gamma^{*}(t)\right)w(t)$, which proves the first formula in (i). 
The second formula is the usual one-dimensional formula for the Christoffel symbol of the Levi--Civita connection.
Together, these formulas give $\nabla^{*} = 2\nabla^{g}-\nabla$.

Applying the affine-coordinate criterion to $\Gamma$, $\Gamma^{*}$, and $\Gamma^{g}$, with $\Gamma^{*}$ and $\Gamma^{g}$ given by (i), yields
\[ \exp\left(\int^{t}\Gamma^{*}(\tau)\,d\tau\right) =  c_1\, w(t)\exp\left(-\int^{t}\Gamma(\tau)\,d\tau\right) = c_2\,\frac{w(t)}{\theta'(t)}, \]
\[  \exp\left(\int^{t}\Gamma^{g}(\tau)\,d\tau\right) = c_3\sqrt{w(t)}, \]
with constants $c_i>0$, which proves (ii). 
A $\nabla$-geodesic is an affinely parametrized segment in the coordinate $\theta$, so any $p,q\in I$ are joined by a unique such geodesic inside $I$, whose midpoint is $M_{\theta}(p,q)$, and similarly for $\eta$ and $s$. 

Statement (iii) is immediate from the defining formulas. 

For (iv), in the coordinate $x = \theta(t)$ the metric coefficient $\widetilde{w}$ is determined by $\widetilde{w}(\theta(t)) = w(t)/\theta'(t)^{2}$, and, regarding $\eta$ as a function of $x$,
\[ \frac{d\eta}{dx}\left(\theta(t)\right) = \frac{\eta'(t)}{\theta'(t)} = \frac{w(t)}{\theta'(t)^{2}} = \widetilde{w}\left(\theta(t)\right)>0, \]
so any $\Phi$ with $\Phi'' = \widetilde{w}$ satisfies $\eta = \Phi'\circ\theta$ up to an additive constant.
\end{proof}

Statement (iv) is special to dimension one in the following sense.
By a theorem of Bryant, every smooth surface metric is locally Hessian with respect to some flat connection~\cite{bryant2025hessianizability}.
In dimension one, this flat connection can be taken to be the prescribed connection $\nabla$, as stated in (iv).
In higher dimensions, the connection $\nabla$ of a dualistic structure is prescribed.
The pair $(g,\nabla)$ need not be dually flat even when $g$ is Hessian with respect to another flat connection.
The Euclidean examples of Section~\ref{sec:parallel-cubic} with noncommuting $K_X$ are examples of this situation.

\begin{Prop}[Reduction to a functional equation]\label{prop:MS-reduction}
Let $J\coloneqq s(I)$ and define the generators
\[ \varphi\coloneqq\theta\circ s^{-1}, \qquad \psi\coloneqq\eta\circ s^{-1} \qquad \text{on } J. \]
Then the invariance identity
\[ M_g\left(A(P,Q),M(P,Q)\right)  =  M_g(P,Q), \qquad P,Q\in I, \]
holds if and only if
\begin{equation}\label{eq:MS}
M_{\varphi}(u,v) + M_{\psi}(u,v)  =  u + v, \qquad u,v\in J.
\end{equation}
Moreover, the duality relation of Lemma~\ref{lem:normal-form}(iii) translates into the constraint
\begin{equation}\label{eq:MS-constraint}
\varphi' (u) \psi' (u) = 1, \quad  u \in J, 
\end{equation}
for the chosen dualistic normalization. 
Under arbitrary independent affine changes of the two generators, the product is a nonzero constant and need not be positive. 
Under compatible affine changes preserving the common orientation, it is positive, and, 
under the reciprocal affine gauge preserving the above normalization, it remains equal to one.
\end{Prop}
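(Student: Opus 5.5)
The plan is to transport everything to the arc-length coordinate $u=s(t)$ by means of the diffeomorphism $s\colon I\to J$. By Lemma~\ref{lem:normal-form}(ii), $A=M_\theta$, $M=M_\eta$ and $M_g=M_s$ on $I\times I$. For any strictly monotone $f$ on $I$, conjugation by $s$ gives the identity
\[ s\left(M_f(P,Q)\right)=M_{f\circ s^{-1}}\left(s(P),s(Q)\right). \]
This is immediate from the definition of a quasi-arithmetic mean, since $(f\circ s^{-1})^{-1}=s\circ f^{-1}$. Applying it with $f=\theta,\eta$ and writing $u=s(P)$, $v=s(Q)$, we obtain
\[ s(A(P,Q))=M_\varphi(u,v), \qquad s(M(P,Q))=M_\psi(u,v). \]
We also have $s(M_g(X,Y))=\frac{s(X)+s(Y)}{2}$.

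Next we apply $s$ to both sides of the invariance identity. Since $s$ is injective, the identity at $(P,Q)$ is equivalent to
\[ \frac{M_\varphi(u,v)+M_\psi(u,v)}{2}=\frac{u+v}{2}. \]
Because $s\colon I\to J$ is a bijection, the pair $(P,Q)$ ranges over $I\times I$ exactly when $(u,v)$ ranges over $J\times J$. This gives the equivalence with \eqref{eq:MS}.

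For the constraint \eqref{eq:MS-constraint}, we use the chain rule with $t=s^{-1}(u)$:
\[ \varphi'(u)\psi'(u)=\frac{\theta'(t)\eta'(t)}{s'(t)^2}. \]
This equals $1$ by Lemma~\ref{lem:normal-form}(iii).

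For the closing remarks, suppose we replace $\varphi$ by $a\varphi+b$ and $\psi$ by $c\psi+d$ with $a,c\neq0$. The means $M_\varphi$ and $M_\psi$ are unchanged, by the uniqueness of generators up to affine maps, and the product $\varphi'\psi'$ becomes the constant $ac$. This constant can have either sign in general. It is positive when $a$ and $c$ have the same sign, which is the orientation-preserving compatible case, and it equals $1$ precisely when $c=1/a$.

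There is no real obstacle in this argument. The only point that needs care is checking that the conjugation identity is applied with the correct composition order, namely $f\circ s^{-1}$ rather than $s\circ f^{-1}$.
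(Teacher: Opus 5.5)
Your proposal is correct and matches the paper's proof: conjugate the three quasi-arithmetic means by $s$, use injectivity (and bijectivity onto $J$) of $s$ to get the equivalence with \eqref{eq:MS}, and obtain $\varphi'\psi'=\theta'\eta'/(s')^2=1$ from Lemma~\ref{lem:normal-form}(iii) by the chain rule. Your computation that independent affine changes turn the product into the constant $ac$ is also correct. The paper does not prove the gauge remarks inside this proof; it discusses them in the text following Theorem~\ref{thm:DP}.
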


\begin{proof}
For continuous strictly monotone $f$ and $h$, the definition of the quasi-arithmetic mean gives the conjugation rule
\[ f\left(M_{h}(p,q)\right) = M_{h\circ f^{-1}}\left(f(p),f(q)\right). \]
Apply $s$ to both sides of the invariance identity and put $u = s(P)$, $v = s(Q)$. Since $s(M_{s}(a,b)) = \frac{s(a) + s(b)}{2}$, the left-hand side becomes
\[ \frac{1}{2}\left(s\left(A(P,Q)\right) + s\left(M(P,Q)\right)\right) = \frac{1}{2}\left(M_{\varphi}(u,v) + M_{\psi}(u,v)\right) \]
by the conjugation rule applied to $h = \theta$ and $h = \eta$, while the right-hand side becomes $\frac{u + v}{2}$. Since $s$ is injective, the two identities are equivalent. 
Finally, at $t = s^{-1}(u)$, $\varphi'(u) = \frac{\theta'(t)}{s'(t)}$ and $\psi'(u) = \frac{\eta'(t)}{s'(t)}$. 
Therefore $\varphi'(u)\,\psi'(u) = \frac{\theta'(t)\,\eta'(t)}{s'(t)^{2}} = 1$ by Lemma~\ref{lem:normal-form}(iii).
\end{proof}

Equation~\eqref{eq:MS} means that the arithmetic mean is invariant with respect to the mean-type mapping $(M_{\varphi},M_{\psi})$.
In other words, one application of the pair does not change the arithmetic mean.
The problem of determining all pairs of quasi-arithmetic means with this property is called the \emph{Matkowski--Sut\^{o} problem}.
Sut\^{o}~\cite{suto-1914-1,suto-1914-2} solved it under analyticity assumptions, and Matkowski~\cite{matkowski-1999} solved it under additional regularity assumptions.
Dar\'{o}czy and P\'{a}les~\cite{daroczy-pales-2002} gave a complete solution assuming only continuity and strict monotonicity. 
See the survey~\cite{jarczyk-jarczyk-2018} for the invariance problem in general.

\begin{Rem}[Sut\^{o}'s invariance problem]\label{rem:suto}
Sut\^{o}~\cite[\S 3]{suto-1914-2} considers the invariance identity
\[
M_\chi\left(M_{\chi_1}(p,q),M_{\chi_2}(p,q)\right)=M_\chi(p,q)
\]
and reduces it to \eqref{eq:MS} by conjugation with $\chi$.
This is the change of variables in Proposition~\ref{prop:MS-reduction}, with $\chi=s$, $\chi_1=\theta$, and $\chi_2=\eta$.
His three generators are independent, whereas ours satisfy the duality constraint \eqref{eq:MS-constraint}.
Under analyticity assumptions, his classification gives the exponential and affine pairs of Theorem~\ref{thm:DP} below.

Taking $\chi=\log$, Sut\^{o} obtains invariance of the geometric mean under the power means of orders $\alpha$ and $-\alpha$, including the arithmetic--harmonic pair~\cite[\S 3]{suto-1914-2}.
This is the scalar restriction of the invariance identities in Section~\ref{sec:nakamura}.
The matrix iteration and its convergence are treated by Nakamura~\cite{nakamura-2001} for the arithmetic--harmonic pair.
Theorem~\ref{thm:rigidity} interprets the scalar classification through the metric coefficient $w$ and uses Theorem~\ref{thm:DP} to dispense with analyticity.
\end{Rem}

\begin{Thm}[Dar\'{o}czy--P\'{a}les~{\cite[Theorem 4.12]{daroczy-pales-2002}}]\label{thm:DP}
Let $J\subset\mathbb{R}$ be an open interval and let $\varphi,\psi\colon J\to\mathbb{R}$ be continuous and strictly monotone. Then \eqref{eq:MS} holds if and only if there exists $p\in\mathbb{R}$ such that, up to affine transformations of the generators,
\[ \varphi = \chi_{p}, \qquad \psi = \chi_{-p}, \quad \text{ where } \chi_{p}(u)\coloneqq
\begin{cases}
e^{pu}, & p \ne  0,\\
u, & p = 0.
\end{cases} \]
\end{Thm}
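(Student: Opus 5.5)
The plan is to prove sufficiency by a direct computation and necessity through a Taylor expansion of quasi-arithmetic means along the diagonal. For the general statement, which assumes only continuity and strict monotonicity, I would invoke the regularity theory of Dar\'oczy and P\'ales rather than reprove it. In this paper the generators $\varphi=\theta\circ s^{-1}$ and $\psi=\eta\circ s^{-1}$ of Proposition~\ref{prop:MS-reduction} are smooth, because $w$ and $\Gamma$ are smooth. It therefore suffices to give a complete argument in the smooth case.

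\emph{Sufficiency.} Quasi-arithmetic means are unchanged by affine changes of their generators, so we may take $\varphi=\chi_p$ and $\psi=\chi_{-p}$. The case $p=0$ is trivial, since both means are arithmetic. For $p\neq0$ we have
\[
M_{\chi_p}(u,v)=\frac1p\log\frac{e^{pu}+e^{pv}}{2},\qquad
M_{\chi_{-p}}(u,v)=-\frac1p\log\frac{e^{-pu}+e^{-pv}}{2}.
\]
Since $(e^{pu}+e^{pv})/(e^{-pu}+e^{-pv})=e^{p(u+v)}$, the sum of the two means is $u+v$. This is the same identity used in the proof of Proposition~\ref{prop:alpha-family}(3).

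\emph{Necessity in the smooth case.} Fix $u\in J$ and set $a\coloneqq\varphi''/\varphi'$ and $b\coloneqq\psi''/\psi'$. I would expand $M_f(u-h,u+h)$ in powers of $h$ for a generator $f$. It is even in $h$, and its expansion has the form
\[
M_f(u-h,u+h)=u+\tfrac12\,\frac{f''}{f'}(u)\,h^2+c_4[f](u)\,h^4+O(h^6).
\]
Here $c_4[f]$ is a universal polynomial in $r=f''/f'$ and its first two derivatives, obtained by inverting $f$ to fourth order. Substituting into \eqref{eq:MS}, the order-$h^2$ term gives $a+b=0$, which is equivalent to $\varphi'\psi'$ being constant. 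This is consistent with \eqref{eq:MS-constraint}.

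At order $h^4$, replace $b$ by $-a$. The terms linear in $a'$ and $a''$ are odd in $r$ and cancel, and the even terms should leave an identity of the form $c\,a\,a'=0$ with a nonzero numerical constant $c$; I still have to check that this coefficient does not vanish. If it holds, then $a^2$ is locally constant, so $a$ is constant on $J$ by continuity. Call this constant $p$. Integrating $\varphi''=p\varphi'$ gives $\varphi=\chi_p$ up to an affine change, and $b=-p$ gives $\psi=\chi_{-p}$ likewise.

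The Taylor expansion only shows that \eqref{eq:MS} is satisfied to fourth order near the diagonal. For the converse implication nothing more is needed, because the pair $(\chi_p,\chi_{-p})$ satisfies \eqref{eq:MS} exactly by the sufficiency computation.

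\emph{Main obstacle.} The main difficulty is the general continuous case. There one first has to show that a continuous, strictly monotone solution pair of \eqref{eq:MS} is automatically differentiable, and in fact $C^\infty$. This is the core of Dar\'oczy--P\'ales~\cite[Theorem~4.12]{daroczy-pales-2002}, and it rests on a careful comparison of quasi-arithmetic means. I would not redo it, but cite it and run the smooth argument above. That argument already covers every structure arising in this section.
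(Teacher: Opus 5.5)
Your argument is correct, but it follows a different route from the paper. The paper gives no proof of this theorem and cites \cite[Theorem~4.12]{daroczy-pales-2002} in full generality.

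The coefficient you left unchecked is nonzero. With $r=f''/f'$ one finds $c_4[f]=\frac{1}{24}\left(r''+3rr'-2r^{3}\right)$. After substituting $b=-a$, the $h^{4}$ part of \eqref{eq:MS} is therefore $\frac{6aa'}{24}h^{4}=\frac{aa'}{4}h^{4}$, which forces $aa'=0$.

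This computation is exactly the one-dimensional case of the paper's Section~\ref{sec:conjecture}. The paragraph after the proof of Theorem~\ref{thm:E4} records the coefficient $\frac{a''+3aa'-2a^{3}}{384}h^{4}$ for a pair with gap $h$, together with $\mathcal{E}_\Gamma=3aa'h^{4}$. In the arc-length coordinate $s$ the metric is $ds^{2}$, and your $a=\varphi''/\varphi'$ and $b=\psi''/\psi'$ are the Christoffel symbols of $\nabla$ and $\nabla^{*}$. Equation \eqref{eq:MS} then becomes \eqref{eq:local-sum}. So your necessity argument is Theorems~\ref{thm:E4} and~\ref{thm:rankone} with $d=1$. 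Your order-$h^{2}$ condition $a+b=0$ is automatically satisfied by the duality constraint \eqref{eq:MS-constraint}.

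What your route buys is a self-contained proof that uses \eqref{eq:MS} only near the diagonal. It already suffices for the paper's only use of this theorem, Theorem~\ref{thm:rigidity}, where $\varphi'=\theta'/s'>0$ and $\psi'=\eta'/s'>0$. You need these derivatives to be nonvanishing in order to define $a$ and $b$, and you should say so explicitly.

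What it does not give is the theorem as stated, for merely continuous strictly monotone generators. For that case the regularity theory of Dar\'oczy and P\'ales is genuinely needed, as you acknowledge.
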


The representatives displayed in Theorem~\ref{thm:DP} do not themselves satisfy the normalization~\eqref{eq:MS-constraint} when $p \ne 0$. 
Indeed, $\chi_p'(u)\chi_{-p}'(u) = -p^2$.
This is consistent with Theorem~\ref{thm:DP}, which determines the generators only up to independent affine transformations.
These transformations do not change the corresponding quasi-arithmetic means.

For $p \ne 0$, consider the affinely equivalent representatives
\[ \widehat\chi_p(u)\coloneqq\frac{1}{p}e^{pu}, \quad \widehat\chi_{-p}(u)\coloneqq-\frac{1}{p}e^{-pu}. \]
They satisfy
\[ \widehat\chi_p'(u)=e^{pu}, \quad \widehat\chi_{-p}'(u)=e^{-pu}, \quad \widehat\chi_p'(u)\widehat\chi_{-p}'(u) \equiv 1. \]
Thus both normalized generators are strictly increasing,
independently of the sign of $p$. For $p=0$, the choice
$\widehat\chi_0(u)=u$ already satisfies the same identity.

More generally, if
\[ \widetilde\varphi = a\chi_p + b, \quad \widetilde\psi = c\chi_{-p} + d, \quad ac \ne 0, \]
then, for $p \ne 0$, $\widetilde\varphi' \widetilde\psi' = - a c p^2$.
Hence the normalization $\widetilde\varphi'\widetilde\psi'\equiv1$ is equivalent to $ac = -p^{-2}$. 
For $p=0$, it is equivalent to $ac=1$.
Therefore, \eqref{eq:MS-constraint} is a normalization of the
generators rather than a property of their unrestricted affine
equivalence classes.

Once this normalization has been imposed, the remaining
compatible affine gauge is
\[ \varphi \longmapsto \lambda\varphi+\beta, \quad \psi\longmapsto\lambda^{-1}\psi+\delta, \quad \lambda \ne 0, \]
which preserves $\varphi'\psi'\equiv1$. Equivalently, once
$\varphi$ is fixed with this normalization, duality determines
$\psi$ up to an additive constant through
$\psi(u)=\psi(u_0) + \int_{u_0}^{u}\frac{dr}{\varphi'(r)}$. 
Thus, for pairs arising from a dualistic structure,
Theorem~\ref{thm:DP} is effectively a condition on the single
normalized generator $\varphi$.

\begin{Thm}[One-dimensional rigidity]\label{thm:rigidity}
Let $(I,g = w\,dx^{2},\nabla)$ be a one-dimensional structure written in a $\nabla$-affine coordinate $x$, so that $A(x_1,x_2) = \frac{x_1 + x_2}{2}$. 
Then the following are equivalent:\\
(a) the invariance identity $M_g \circ (A,M)  =  M_g$ holds on $I\times I$.\\
(b) $A\otimes M  =  M_g$ on $I\times I$, the Gauss composition being well defined because $A$ and $M$ are continuous strict means on an interval.\\
(c) either $w$ is constant on $I$, in which case $\nabla = \nabla^{g}$ and $A = M = M_g$; or 
\begin{equation}\label{eq:inverse-square}
 w(x) = \frac{c}{(x-x_{0})^{2}}
\end{equation}
for some $c > 0$ and $x_{0} \in \mathbb{R}\setminus I$.
In the second case of (c), the affine change $y = \pm(x-x_{0})$ maps $I$ into $(0,\infty)$, and
\[ A(y_1,y_2) = \frac{y_1 + y_2}{2}, \qquad M(y_1,y_2) = \frac{2}{y_1^{-1} + y_2^{-1}}, \qquad M_g(y_1,y_2) = \sqrt{y_1 y_2}: \]
up to affine gauge and a constant rescaling of $g$, the structure is the scalar arithmetic--harmonic--geometric structure underlying Nakamura's iteration (Section~\ref{sec:nakamura}).
\end{Thm}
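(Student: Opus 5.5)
The plan is to prove (a)$\Leftrightarrow$(c) by reducing to Theorem~\ref{thm:DP} through Proposition~\ref{prop:MS-reduction}, and then to handle (b) with Theorem~\ref{thm:invariance}.

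In the $\nabla$-affine coordinate $x$ we have $\Gamma=0$. By Lemma~\ref{lem:normal-form} we may therefore take $\theta(x)=x$, $\eta'=w$ and $s'=\sqrt w$. Then $\varphi=s^{-1}$ on $J=s(I)$, and since $\varphi'\psi'=1$ we get $\psi'=1/\varphi'$. By Proposition~\ref{prop:MS-reduction} and Theorem~\ref{thm:DP}, (a) holds if and only if there are $p\in\mathbb R$, $a\ne0$ and $b$ with $\varphi=a\chi_p+b$, up to affine change of $\varphi$. The compensating affine change of $\psi$ is automatic: duality fixes $\psi$ up to the compatible gauge, so only the condition on $\varphi$ matters.

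We now compute $w$ from the form of $\varphi$. From $x=\varphi(u)$ with $u=s(x)$ we get
\[
w(x)=s'(x)^2=\frac{1}{\varphi'(s(x))^{2}}.
\]
If $p=0$, then $\varphi'$ is a constant, so $w$ is constant. In that case $\Gamma^g=0=\Gamma=\Gamma^*$, and $A=M=M_g$.

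If $p\ne0$, then $x=ae^{pu}+b$, so $\varphi'(u)=ape^{pu}=p(x-b)$. This gives
\[
w(x)=\frac{p^{-2}}{(x-b)^{2}},
\]
with $x_0=b\notin I$ because $e^{pu}$ never vanishes. Setting $c=p^{-2}$ yields \eqref{eq:inverse-square}.

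Conversely, for $w=c/(x-x_0)^2$, put $y=\pm(x-x_0)>0$. Then $s=\sqrt c\log y$ and $\eta'=c/y^{2}$ (up to sign from the orientation), so $\eta$ is affine in $1/y$. This shows that $M_g$ is geometric, $M$ is harmonic and $A$ is arithmetic. The invariance then follows directly: $\sqrt{AH}=\sqrt{y_1y_2}$. For constant $w$ the invariance is trivial, since $A=M=M_g$. This proves (a)$\Leftrightarrow$(c).

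For (b), I will first prove global convergence of the iteration. Because $A$ and $M$ are strict continuous means, we have $\min\le A,M\le\max$ on $I$, so the iteration stays in the compact interval $[\min(p,q),\max(p,q)]$. The sequences $\min(P_n,Q_n)$ are nondecreasing and $\max(P_n,Q_n)$ are nonincreasing, so they converge to limits $\ell\le L$. If $\ell<L$, a continuity argument together with strictness gives a contradiction, exactly as in the classical Gauss-composition argument. Hence $A\otimes M$ is defined on $I\times I$. Also $M_g$ is continuous there and the domain is $T$-invariant, so Theorem~\ref{thm:invariance} gives (a)$\Leftrightarrow$(b).

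I expect the main obstacle to be bookkeeping the affine gauge in Theorem~\ref{thm:DP}, which allows independent affine maps of $\varphi$ and $\psi$. I must check that this freedom does not enlarge the family of possible $w$: the constraint \eqref{eq:MS-constraint} leaves only the compatible gauge, and under that gauge $w=1/\varphi'(s)^2$ changes only by a constant factor, which is absorbed into $c$. I also need to confirm that $x_0\notin I$ and the sign choice in $y$ are handled correctly.
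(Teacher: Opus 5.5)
Your proposal is correct and follows the paper's proof essentially step by step. It proves (a)$\Leftrightarrow$(c) through Proposition~\ref{prop:MS-reduction} and Theorem~\ref{thm:DP} with $\theta=x$, computes $w=1/\varphi'(s)^2$ in the cases $p=0$ and $p\neq 0$, and checks the converse directly through the arithmetic--harmonic--geometric means. The only difference is in (b): you sketch the standard monotone $\min/\max$ plus compactness argument for convergence of the iteration, whereas the paper cites Matkowski's invariance principle. Your concern about the gauge does not arise, because fixing $\theta=x$ and $s'=\sqrt{w}$ already determines $\varphi=s^{-1}$.
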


\begin{proof}
(a)$\Leftrightarrow$(b). The means $A$ and $M$ are quasi-arithmetic, hence continuous and strict. By Matkowski's invariance principle for iterates of mean-type mappings~\cite{matkowski-1999-iterations}, the iteration $P_{n + 1} = A(P_n,Q_n)$, $Q_{n + 1} = M(P_n,Q_n)$ then converges to a common limit for all $P,Q\in I$. Theorem~\ref{thm:invariance} gives the equivalence.

(a)$\Leftrightarrow$(c). By Proposition~\ref{prop:MS-reduction} and Theorem~\ref{thm:DP}, (a) holds if and only if $\varphi = \theta\circ s^{-1}$ is, up to an affine transformation, equal to $\chi_{p}$ for some $p\in\mathbb{R}$.
Indeed, by the normalization $\varphi'\psi'=1$, the generator $\psi$ is an affine transform of $\chi_{-p}$.
This follows by integrating $\psi'=1/\varphi'$.
Thus it suffices to state the condition in terms of $\varphi$ alone.
In the coordinate $x$ we have $\theta = x$ and $\Gamma\equiv 0$.

If $p = 0$, then $x$ is an affine function of $s$.
Thus $s'$ is constant and $w = (s')^{2}$ is constant.
Then $\Gamma^{g} = \frac{w'}{2w}\equiv 0 = \Gamma$, that is, $\nabla = \nabla^{g}$.
It follows that $\nabla^{*} = \nabla^{g}$ and $A = M = M_g$.
Hence (a) holds by idempotence.

If $p \ne  0$, write $x = \alpha e^{ps} + \beta$ with $\alpha \ne  0$ and set $x_{0} = \beta$. 
Then $x-x_{0} = \alpha e^{ps}$ has constant sign on $I$, so $x_{0}\notin I$, and
\[ s(x) = \frac{1}{p}\log\frac{x-x_{0}}{\alpha}, \qquad w(x) = s'(x)^{2} = \frac{1}{p^{2}(x-x_{0})^{2}}, \]
so $w$ is of the form \eqref{eq:inverse-square} with $c = p^{-2}$. 

Conversely, suppose that $w$ is of the form \eqref{eq:inverse-square} and put $y = \pm(x-x_{0})>0$. 
This affine change of the $\nabla$-affine coordinate has unit Jacobian, so $y$ is again $\nabla$-affine and the metric becomes $g = c\,y^{-2}\,dy^{2}$.
We apply Lemma~\ref{lem:normal-form} in the coordinate $y$, with all derivatives taken with respect to $y$. 
Normalizing $\theta(y) = y$, so that $\theta' \equiv 1$, $A$ is the arithmetic mean in $y$. 
From $\eta'(y) = c\,y^{-2}/\theta'(y) = c\,y^{-2}$, we obtain $\eta(y) = -c\,y^{-1}$ up to an additive constant.
Thus $M$ is the harmonic mean.
Also, $s'(y) = \sqrt{c}\,y^{-1}$ gives $s(y) = \sqrt{c}\,\log y$, so $M_g$ is the geometric mean. 
The invariance identity holds in the classical form: since $A(y_1,y_2)\,M(y_1,y_2) = y_1y_2$,
\[ M_g\left(A(y_1,y_2),M(y_1,y_2)\right) = \sqrt{A(y_1,y_2)\,M(y_1,y_2)} = \sqrt{y_1y_2} = M_g(y_1,y_2). \qedhere \]
\end{proof}

The one-dimensional classification produces necessary conditions in higher dimensions by restriction to curves which are autoparallel with respect to both members of the dual pair.
Such submanifolds are called \emph{doubly autoparallel} by Ohara, Ishi, and Tsuchiya~\cite{ohara2024doubly}; by \eqref{eq:dual-average}, autoparallelism with respect to $\nabla$ and $\nabla^{g}$, as assumed below, is equivalent to autoparallelism with respect to $\nabla$ and $\nabla^{*}$.

\begin{Thm}[Restriction to autoparallel curves]\label{thm:restriction}
Let $(\mathcal{P},g,\nabla,\nabla^{*})$ be a torsion-free dualistic structure; that is, assume that both $\nabla$ and $\nabla^*$ are torsion-free.  
Let $\mathcal{N} \subset \mathcal{P}$ be a one-dimensional embedded submanifold that is autoparallel with respect to both $\nabla$ and $\nabla^{g}$, that is, $\nabla_{X}Y$ and $\nabla^{g}_{X}Y$ are tangent to $\mathcal{N}$ whenever $X,Y$ are vector fields tangent to $\mathcal{N}$. 
Then, \\
(i) $\mathcal{N}$ is autoparallel with respect to $\nabla^{*} = 2\nabla^{g}-\nabla$.\\
(ii) $\nabla^{*}|_{\mathcal{N}}$ is the $g|_{\mathcal{N}}$-dual of $\nabla|_{\mathcal{N}}$, and $\nabla^{g}|_{\mathcal{N}}$ is the Levi--Civita connection of $g|_{\mathcal{N}}$.\\
(iii) Let $\mathcal{U} \subset \mathcal{N}$ be a connected open arc which is geodesically convex with respect to each of the induced connections
$\nabla|_{\mathcal{U}},\quad \nabla^*|_{\mathcal{U}},\quad \nabla^g|_{\mathcal{U}}$; 
that is, every pair of points in $\mathcal{U}$ is joined by a unique geodesic segment of each induced connection that remains in $\mathcal{U}$. 
Assume moreover that, for every $P,Q\in \mathcal{U}$, the corresponding ambient geodesic segment is unique for each of $\nabla$, $\nabla^*$, and $\nabla^g$.
Then, for all $P,Q\in \mathcal{U}$, the ambient midpoint means
$A(P,Q)$, $M(P,Q)$, and $M_g(P,Q)$ lie in $\mathcal{U}$ and
coincide with the corresponding midpoint means
$A_{\mathcal{U}}(P,Q)$, $M_{\mathcal{U}}(P,Q)$, and $M_{g,\mathcal{U}}(P,Q)$ of the induced
one-dimensional structure on $\mathcal{U}$.

Consequently, if the invariance identity holds on $\mathcal P$, then
\[ M_{g,\mathcal{U}}\left(A_{\mathcal{U}}(P,Q),M_{\mathcal{U}}(P,Q)\right) = M_{g,\mathcal{U}}(P,Q), \quad P,Q \in \mathcal{U}.\]
After identifying $\mathcal{U}$ with an open interval by a $\nabla|_{\mathcal{U}}$-affine coordinate, Theorem~\ref{thm:rigidity} applies to the induced structure on $\mathcal{U}$.
\end{Thm}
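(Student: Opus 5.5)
For (i), I will use the identity \eqref{eq:dual-average}, $\nabla^{*} = 2\nabla^{g}-\nabla$, which holds because both $\nabla$ and $\nabla^{*}$ are torsion-free. For vector fields $X,Y$ tangent to $\mathcal N$, the vectors $\nabla_XY$ and $\nabla^g_XY$ are tangent to $\mathcal N$. Hence so is $\nabla^*_XY=2\nabla^g_XY-\nabla_XY$.

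For (ii), I work along $\mathcal N$ and let $X,Y,Z$ be tangent fields, extended locally if needed. By autoparallelism the induced connections are simply $\nabla|_{\mathcal N}$, $\nabla^*|_{\mathcal N}$ and $\nabla^g|_{\mathcal N}$, with no projection required. The duality identity \eqref{eq:duality}, restricted to tangent fields, is then exactly the duality identity for $g|_{\mathcal N}$. For the Levi--Civita part, $\nabla^g|_{\mathcal N}$ is the tangential part of $\nabla^g$, and this is the Levi--Civita connection of the induced metric by the Gauss formula. Alternatively, it is torsion-free and $g|_{\mathcal N}$-metric, so uniqueness of the Levi--Civita connection gives the same conclusion.

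For (iii), the key point is that for any autoparallel submanifold, a geodesic of the induced connection is also a geodesic of the ambient connection. Indeed, its acceleration $D_{\dot\gamma}\dot\gamma$ is tangent to $\mathcal N$ and equals the induced acceleration, which vanishes. Fix $P,Q\in\mathcal U$ and $D\in\{\nabla,\nabla^*,\nabla^g\}$. By the convexity hypothesis there is a unique induced $D$-geodesic segment in $\mathcal U$ from $P$ to $Q$. This segment is an affinely parametrized ambient $D$-geodesic joining $P$ and $Q$. By the assumed ambient uniqueness, it is the ambient segment defining $M_D(P,Q)$. Its parameter midpoint therefore lies in $\mathcal U$ and equals both the ambient and the induced midpoint.

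The consequence then follows directly. Apply (iii) to $(P,Q)$, which is legitimate because the ambient invariance identity holds there. Next apply (iii) to the pair $(A(P,Q),M(P,Q))$, which lies in $\mathcal U\times\mathcal U$, to identify the ambient $M_g$ of this pair with $M_{g,\mathcal U}$. This yields the restricted invariance identity. Finally, by Lemma~\ref{lem:normal-form}, a $\nabla|_{\mathcal U}$-affine coordinate identifies $\mathcal U$ with an open interval, and Theorem~\ref{thm:rigidity} applies. The only delicate point is matching the affine parametrizations and the uniqueness conventions in (iii), which is why ambient uniqueness is assumed explicitly. I do not expect a real obstacle beyond stating that matching carefully.
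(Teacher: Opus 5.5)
Your proposal is correct and follows the paper's proof essentially step by step. Part (i) comes from $\nabla^{*}=2\nabla^{g}-\nabla$, and part (ii) from restricting the duality identity \eqref{eq:duality} to tangent fields and using that $\nabla^{g}|_{\mathcal N}$ is torsion-free and $g|_{\mathcal N}$-metric. Part (iii) follows by noting that induced geodesics on an autoparallel curve are ambient geodesics and invoking ambient uniqueness, once for $(P,Q)$ and once for $(A(P,Q),M(P,Q))\in\mathcal U\times\mathcal U$.

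The only slip is cosmetic: applying (iii) to $(P,Q)$ is justified by $P,Q\in\mathcal U$, not by the invariance identity holding there.
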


\begin{proof}
(i) is immediate from $\nabla^{*} = 2\nabla^{g}-\nabla$, that is, from \eqref{eq:dual-average}. 

For (ii), evaluate the defining identity \eqref{eq:duality} on vector fields tangent to $\mathcal{N}$.
The identity involves only the restricted data, and the $g|_{\mathcal{N}}$-dual connection is unique.
Also, $\nabla^{g}|_{\mathcal{N}}$ is torsion-free and $g|_{\mathcal{N}}$-metric, so it is the Levi--Civita connection of $g|_{\mathcal{N}}$. 

For (iii), fix $D \in \{\nabla,\nabla^*,\nabla^g\}$ 
and let $\gamma_D^{\mathcal{U}} : [0,1]\to \mathcal{U}$ be the geodesic of the induced
connection joining $P$ to $Q$. 
Since $\mathcal{U}\subset\mathcal{N}$ is $D$-autoparallel, $\gamma_D^{\mathcal{U}}$ is also an ambient $D$-geodesic. 
By the uniqueness of the ambient geodesic,
$\gamma_D^{\mathcal{U}}$ coincides with the ambient geodesic joining $P$ to $Q$. 
Hence their midpoints agree and belong to $\mathcal{U}$.

In particular,
\[ A(P,Q) = A_{\mathcal{U}} (P,Q),\quad M(P,Q) = M_{\mathcal{U}} (P,Q),\quad M_g(P,Q) = M_{g,\mathcal{U}}(P,Q). \]
Since $A_{\mathcal{U}} (P,Q)$ and $M_{\mathcal{U}} (P,Q)$ also belong to $\mathcal{U}$, the same argument applies to the pair $\left(A_{\mathcal{U}}(P,Q),M_{\mathcal{U}}(P,Q)\right)$. 
Therefore the ambient invariance identity restricts to the stated intrinsic invariance identity on $\mathcal{U}$.
\end{proof}

\begin{Rem}\label{rem:examples-revisited}
(1) Autoparallel curves as in Theorem~\ref{thm:restriction} arise as one-dimensional components of common fixed-point sets of transformations that are simultaneously $g$-isometric and $\nabla$-affine, by the standard geodesic-uniqueness argument; such transformations preserve $\nabla^{g}$ and, since $\nabla^{*}$ is determined by $(g,\nabla)$, also $\nabla^{*}$.\\
(2) On the scalar ray $\{tI_m: t>0\}$ of Section~\ref{sec:nakamura}, the coordinate $t$ is $\nabla^{E}$-affine and $w(t) = \operatorname{tr}\left((tI)^{-1}I(tI)^{-1}I\right) = m\,t^{-2}$, which is exactly the form \eqref{eq:inverse-square} with $c = m$ and $x_{0} = 0$. 
The restricted means are the arithmetic, harmonic, and geometric means, consistent with the invariance identity valid on all of $\SPD(m)$.\\
(3) The scalar ray $\{tI_{2}: t>0\}$ of Section~\ref{sec:counterexample} is the fixed-point set of the linear map $(x,y,z)\mapsto(y,x,-z)$, which preserves $\Phi$ and is therefore a $\nabla^{E}$-affine $g$-isometry. 
On this ray, $t$ is $\nabla^{E}$-affine and $w(t) = 2t^{2}$, which is neither constant nor of the form $c(t-t_{0})^{-2}$; equivalently, $s(t) = t^{2}/\sqrt{2}$ gives $\varphi(u) = \theta\circ s^{-1}(u)\propto\sqrt{u}$, which is neither affine nor exponential. 
By Theorem~\ref{thm:rigidity} the invariance identity fails on the ray, hence for the Hessian structure of Section~\ref{sec:counterexample}; and since the iteration converges automatically in dimension one, $A\otimes M \ne  M_g$ on the ray. 
This gives another proof of the conclusion of Section~\ref{sec:counterexample} without numerical computation.
The numerical values give the size of the difference.\\
(4) For a product of one-dimensional dualistic structures, the midpoint means $A$, $M$, and $M_g$ act componentwise.
Hence the invariance identity holds if and only if each factor is of one of the two types in Theorem~\ref{thm:rigidity}(c).
In the Hessian setting, this includes $\nabla=\nabla^E$ with $\Phi(x_1,\ldots,x_d)=\sum_{i=1}^{d}\Phi_i(x_i)$.\\
(5) For the Euclidean structures of Section~\ref{sec:parallel-cubic} with $\nabla$ flat, the commuting $g$-symmetric operators $K_X$ are simultaneously diagonalizable in an orthonormal basis $(f_i)$.
The symmetry of $K$ then gives
\[ K(f_i,f_j)=0 \quad (i\ne j),  \quad  K(f_i,f_i) = c_i f_i.\]
Hence, the structure is the orthogonal product of the one-dimensional structures $(\mathbb R,du^2,\Gamma\equiv c_i)$,
whose normal form in Lemma~\ref{lem:normal-form} has, up to affine transformations, the generators $(\varphi,\psi)=(\chi_{c_i},\chi_{-c_i})$.
These are precisely the exponential pairs of Theorem~\ref{thm:DP}; equivalently, up to gauge, 
they are factors of the two types in Theorem~\ref{thm:rigidity}(c), consistently with item~(4).
\end{Rem}

\section{A rigidity conjecture for Euclidean dual pairs}\label{sec:conjecture}

Section~\ref{sec:parallel-cubic} shows that every constant totally symmetric cubic form on $\mathbb{R}^d$ yields a Euclidean dual pair satisfying the invariance identity \eqref{eq:sum-invariance} for all sufficiently close pairs.
We conjecture that these are all the Euclidean dual pairs with this property.
We give a pointwise necessary condition of fourth order.
Using this condition, we prove the conjecture in dimension one and for cubic forms that are scalar multiples of a constant one.

Let $\Omega \subset \mathbb{R}^d$ be a connected open set with the Euclidean metric $g = \langle\cdot,\cdot\rangle$, so that $\nabla^E = \nabla^g$ and $M_g(P,Q) = \frac{P+Q}{2}$.
For a smooth totally symmetric $3$-tensor field $C$ on $\Omega$, define $K$, $\nabla = \nabla^E + K$, and $\nabla^* = \nabla^E - K$ as in Section~\ref{sec:parallel-cubic}.
Here we do not assume that the coefficients are constant.
We call $C$ \emph{constant} if $D_vC = 0$ for every $v \in \mathbb{R}^d$.
As in that section, $\nabla$ and $\nabla^*$ are torsion-free and $g$-dual.
By Section~\ref{sec:setting}, every torsion-free connection on $\Omega$ with torsion-free $g$-dual is obtained in this way from $C = -\frac{1}{2}\nabla g$.
Let $D = \nabla^E + \Gamma$ be a torsion-free connection on $\Omega$.
The map $(P,v) \mapsto \left(P,\operatorname{Exp}^{D}_{P}(v)\right)$ is a diffeomorphism from an open neighborhood of $\Omega \times \{0\}$ onto an open neighborhood $W_D$ of the diagonal $\Delta_\Omega$ of $\Omega \times \Omega$ \cite[Chapter~III, Proposition~8.1]{kobayashi-nomizu-1963}.
By \eqref{eq:midpoint-exp}, we define the \emph{local midpoint map} $M_D$ on $W_D$.
It is symmetric on a neighborhood of $\Delta_\Omega$.
Write $A \coloneqq M_{\nabla}$ and $M \coloneqq M_{\nabla^*}$.
We say that $C$ has the \emph{local midpoint invariance property} if
\begin{equation}\label{eq:local-sum}
A(P,Q) + M(P,Q) = P + Q
\end{equation}
holds on an open neighborhood of $\Delta_\Omega$ in $W_\nabla \cap W_{\nabla^*}$.
This is the invariance identity \eqref{eq:Mg-inv} near the diagonal.
By Theorem~\ref{thm:local-convergence}, it implies that $A \otimes M = M_g$ for all sufficiently close pairs.
Every constant $C$ on $\mathbb{R}^d$ has the property by \eqref{eq:sum-invariance}.

\begin{Conj}\label{conj:rigidity}
Let $\Omega \subset \mathbb{R}^d$ be a connected open set and let $C$ be a smooth totally symmetric $3$-tensor field on $\Omega$ with the local midpoint invariance property.
Then $C$ is constant.
\end{Conj}

For a smooth symmetric $(1,2)$-tensor field $\Gamma$ on $\Omega$, we write $D_v\Gamma$ and $D^2\Gamma$ for its first and second directional derivatives, and we define, for $x \in \Omega$ and $h \in \mathbb{R}^d$, with all tensors evaluated at $x$,
\begin{equation}\label{eq:E4}
\mathcal{E}_{\Gamma}(x;h) \coloneqq 2\,\Gamma\left(h, (D_h\Gamma)(h,h)\right) + 5\,(D_{\Gamma(h,h)}\Gamma)(h,h) - 4\,(D_h\Gamma)\left(h, \Gamma(h,h)\right) .
\end{equation}

\begin{Thm}[Fourth-order obstruction]\label{thm:E4}
Let $\Gamma_1, \Gamma_2$ be smooth symmetric $(1,2)$-tensor fields on $\Omega$, let $A_1, A_2$ be the local midpoint maps of $\nabla^{E} + \Gamma_1$ and $\nabla^{E} + \Gamma_2$, and suppose that $A_1(P,Q) + A_2(P,Q) = P + Q$ on an open neighborhood of $\Delta_{\Omega}$.
Then \\
(1) $\Gamma_2 = -\Gamma_1$ on $\Omega$; \\
(2) $\mathcal{E}_{\Gamma_1}(x;h) = 0$ for all $x \in \Omega$ and $h \in \mathbb{R}^d$.
\end{Thm}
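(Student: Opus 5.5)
The approach is to expand the local midpoint map of $D=\nabla^E+\Gamma$ symmetrically about the midpoint, up to fourth order. I will write $P=x-\frac h2$ and $Q=x+\frac h2$ with $x\in\Omega$ fixed and $h$ small. Since $M_D$ is smooth and symmetric near the diagonal, the map $h\mapsto M_D(x-\frac h2,x+\frac h2)-x$ is smooth and even in $h$. It therefore has an expansion
\[
M_D\left(x-\tfrac h2,x+\tfrac h2\right)=x+Q_\Gamma(x;h)+R_\Gamma(x;h)+O(|h|^6),
\]
where $Q_\Gamma$ is quadratic in $h$ and $R_\Gamma$ is quartic. The plan is to compute $Q_\Gamma$ and $R_\Gamma$ explicitly in terms of $\Gamma$, $D\Gamma$ and $D^2\Gamma$ at $x$.

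To get these terms I will parametrize the geodesic by $t\in[-\frac12,\frac12]$ as $\gamma(t)=x+th+u(t)$ with $u(\pm\frac12)=0$. It solves the boundary-value problem
\[
\ddot u=-\Gamma(\gamma)(\dot\gamma,\dot\gamma),
\]
and the midpoint is $M_D=x+u(0)$. I will solve this by Picard iteration in powers of $h$, justified by the same Green-function bound as in Section~\ref{sec:parallel-cubic}.

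At second order, $\ddot u_2=-\Gamma_x(h,h)$, which gives
\[
u_2(t)=\tfrac12\left(\tfrac14-t^2\right)\Gamma(h,h),
\]
so $Q_\Gamma=\frac18\Gamma(h,h)$. This matches Section~\ref{sec:parallel-cubic} with the sign convention there.

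At third order, $\ddot u_3=-t\,(D_h\Gamma)(h,h)$ is odd in $t$. Hence $u_3$ is odd and vanishes at $t=0$, consistent with evenness.

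At fourth order the forcing collects several contributions:
\[
\begin{aligned}
-\ddot u_4 ={}& \tfrac{t^2}{2}(D^2_{h,h}\Gamma)(h,h) + (D_{u_2}\Gamma)(h,h) + 2\Gamma(h,\dot u_3)\\
&+ 2\Gamma(\dot u_2\text{-terms}) + \text{products of } \Gamma \text{ with } \Gamma(h,h).
\end{aligned}
\]
Here $u_3$ itself depends on $D_h\Gamma$. I will integrate each term against the Dirichlet Green function $G(0,t)$ on $[-\frac12,\frac12]$. This yields
\[
R_\Gamma=c_1 D^2\Gamma[h,h](h,h)+c_2(D_{\Gamma(h,h)}\Gamma)(h,h)+c_3\Gamma\bigl(h,(D_h\Gamma)(h,h)\bigr)+c_4(D_h\Gamma)\bigl(h,\Gamma(h,h)\bigr)+c_5\Gamma\bigl(h,\Gamma(h,\Gamma(h,h))\bigr)
\]
with explicit rational constants $c_i$.

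For part (1), I substitute the expansions into the hypothesis $A_1+A_2=P+Q$ and compare the quadratic terms. This gives $\Gamma_1(h,h)+\Gamma_2(h,h)=0$ for all $h$. By polarization, and since both fields are symmetric, $\Gamma_2=-\Gamma_1$ on $\Omega$.

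For part (2), I compare the quartic terms with $\Gamma_2=-\Gamma_1$, so that $R_{\Gamma_1}(x;h)+R_{-\Gamma_1}(x;h)=0$. In the formula for $R_\Gamma$, the terms of odd degree in $\Gamma$ change sign under $\Gamma\mapsto-\Gamma$. These are the $D^2\Gamma$ term and the cubic $\Gamma\Gamma\Gamma$ term, so they cancel. The terms of degree two in $\Gamma$ are even and double. The surviving identity is
\[
c_3\Gamma\bigl(h,(D_h\Gamma)(h,h)\bigr)+c_2(D_{\Gamma(h,h)}\Gamma)(h,h)+c_4(D_h\Gamma)\bigl(h,\Gamma(h,h)\bigr)=0.
\]
I expect this to be proportional to $\mathcal E_{\Gamma_1}(x;h)$, with $(c_3,c_2,c_4)\propto(2,5,-4)$. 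Remark~\ref{rem:local-theorems} states that the quartic term of $A+M-(P+Q)$ is $\frac{1}{192}\mathcal E_\Gamma$, which would serve as a check.

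The main obstacle is the bookkeeping in the fourth-order term. One must track the contribution of $\dot u_2$ in $2\Gamma(h,\dot u)$ and of $u_2$ in $D_{u}\Gamma$, and also the feedback of $u_3$ through $2\Gamma(h,\dot u_3)$. Each of these needs integration against the Green function to get exactly the coefficients $2,5,-4$. I would first compute the scalar moments $\int G(0,t)t^k\,dt$ and $\int G(0,t)(\frac14-t^2)\,dt$, then assemble the terms. As a sanity check I would use the one-dimensional case $\Gamma=\gamma(x)$, where the midpoint is the explicit quasi-arithmetic mean of Lemma~\ref{lem:normal-form}.
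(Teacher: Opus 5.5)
Your strategy matches the paper's: solve the geodesic boundary-value problem degree by degree in $h$, use evenness of $h\mapsto A_\Gamma(x-\frac h2,x+\frac h2)$, read off (1) from the quadratic term, and get (2) from the quartic term by parity under $\Gamma\mapsto-\Gamma$. Part (1) is complete. Part (2) is not proved. Its content is that the surviving $\Gamma\cdot D\Gamma$ combination is exactly $\mathcal E_{\Gamma_1}$, and you only \emph{expect} $(c_3,c_2,c_4)\propto(2,5,-4)$. Checking this against Remark~\ref{rem:local-theorems} is circular, because that remark is derived from \eqref{eq:expansion4}, which is what this proof must establish.

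The bookkeeping you do write down would also not yield $\mathcal E_\Gamma$ if followed literally. Put $k\coloneqq\Gamma(h,h)$.
At third order you dropped $-2\Gamma(h,\dot u_2)=2t\Gamma(h,k)$. In fact $\ddot u_3=t\,n$ with $n\coloneqq2\Gamma(h,k)-(D_h\Gamma)(h,h)$, so $u_3=(\frac{t^3}{6}-\frac{t}{24})n$. This error only affects terms cubic in $\Gamma$, so it is harmless for (2), but it would spoil the $a^3$ coefficient in your one-dimensional check.
The consequential problem is at fourth order. Your forcing list, with its vague entry for $\dot u_2$-terms, does not identify the cross term $2t\,(D_h\Gamma)(h,\dot u_2)=-2t^2(D_h\Gamma)(h,k)$. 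This is the first-order Taylor term of $\Gamma_\gamma$ paired with $\dot u_2$, and it is the only source of the coefficient $-4$ in $\mathcal E_\Gamma$. Without it you would obtain $2\Gamma(h,(D_h\Gamma)(h,h))+5(D_k\Gamma)(h,h)$, which is not the stated obstruction.
Your ansatz for $R_\Gamma$ also omits $\Gamma(k,k)$; this is harmless, since that term is odd in $\Gamma$.

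To close the gap, carry out the computation with the full forcing
\begin{align*}
-\ddot u_4={}&t^2\,\Gamma(k,k)+2\Gamma(h,\dot u_3)+2t\,(D_h\Gamma)(h,\dot u_2)\\
&+(D_{u_2}\Gamma)(h,h)+\tfrac{t^2}{2}(D^2\Gamma)(h,h)(h,h),
\end{align*}
where $u_2=(\frac18-\frac{t^2}{2})k$, $\dot u_2=-tk$, and $\dot u_3=(\frac{t^2}{2}-\frac1{24})n$. Write $\ddot u_4=B_0+B_2t^2$. The conditions $u_4(\pm\frac12)=0$ give $u_4(0)=-\frac18B_0-\frac1{192}B_2$, and hence
\begin{align*}
384\,u_4(0)={}&-4\Gamma\bigl(h,\Gamma(h,k)\bigr)+2\Gamma(k,k)+(D^2\Gamma)(h,h)(h,h)\\
&+2\Gamma\bigl(h,(D_h\Gamma)(h,h)\bigr)+5(D_k\Gamma)(h,h)-4(D_h\Gamma)(h,k).
\end{align*}
The coefficient $2$ comes from $2\Gamma(h,\dot u_3)$, which contributes net $-2\Gamma(h,n)$. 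The $5=6-1$ comes from $(D_{u_2}\Gamma)(h,h)$, and the $-4$ comes from the cross term. Now add the expansions for $\Gamma_1$ and $-\Gamma_1$: the first three terms cancel and the last three double. The quartic part is then exactly $\frac1{192}\mathcal E_{\Gamma_1}(x;h)$. This is the calculation carried out in the paper's proof.
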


\begin{Thm}\label{thm:rankone}
Let $\Omega \subset \mathbb{R}^d$ be a connected open set, let $C_1 \ne 0$ be a constant totally symmetric $3$-tensor on $\mathbb{R}^d$, let $f \in C^{\infty}(\Omega)$, and let $K$ be the $(1,2)$-tensor field associated with $C = fC_1$.
If $\mathcal{E}_{K} \equiv 0$ on $\Omega$, then $f$ is constant.
Consequently, Conjecture~\ref{conj:rigidity} holds for $d = 1$ and, in every dimension, for $C = fC_1$ with $C_1$ constant.
\end{Thm}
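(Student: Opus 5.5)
Write $K = fK_1$, where $K_1$ is the constant symmetric bilinear map with $\langle K_1(X,Y),Z\rangle = C_1(X,Y,Z)$. Since $K_1$ is constant, $D_vK = (D_vf)\,K_1 = \langle \nabla f, v\rangle K_1$. We substitute this into \eqref{eq:E4}. For $x\in\Omega$ and $h\in\mathbb{R}^d$, put $k\coloneqq K_1(h,h)$ and $a\coloneqq\langle\nabla f(x),h\rangle$. Each of the three terms has one factor $f$ and one derivative of $f$:
\[
2\,K\left(h,(D_hK)(h,h)\right) = 2f a\,K_1(h,k),\qquad
5\,(D_{K(h,h)}K)(h,h) = 5f\langle\nabla f,k\rangle\,k,
\]
\[
-4\,(D_hK)\left(h,K(h,h)\right) = -4af\,K_1(h,k).
\]
Hence
\[
\mathcal{E}_K(x;h) = f(x)\left(5\,\langle\nabla f(x),k\rangle\,k - 2\,\langle\nabla f(x),h\rangle\,K_1(h,k)\right).
\]

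We first work on the open set $\{f\ne0\}$ and show that $\nabla f=0$ there. Take the inner product of $\mathcal{E}_K=0$ with $h$ and use $\langle K_1(h,k),h\rangle = C_1(h,h,k)=\langle k,k\rangle$. This gives
\[
5\,\langle\nabla f,k\rangle\,\|k\|^2 = 2\,\langle\nabla f,h\rangle\,\|k\|^2 .
\]
On the open dense set of $h$ with $k=K_1(h,h)\ne0$ (nonempty because $C_1\ne0$), this reduces to
\[
5\,\langle\nabla f, K_1(h,h)\rangle = 2\,\langle\nabla f,h\rangle .
\]
The left side is homogeneous of degree two in $h$ and the right side of degree one. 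Replacing $h$ by $th$ for small $t$ keeps $k\ne0$ and forces both sides to vanish, in particular $\langle\nabla f,h\rangle=0$. This holds on an open dense set of $h$, so $\nabla f(x)=0$. Therefore $f$ is locally constant on the open set $\{f\ne0\}$.

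The main obstacle is to pass from local constancy on $\{f\ne0\}$ to global constancy, because of the zero set. Suppose $f$ is not identically zero. Let $U$ be a connected component of $\{f\ne0\}$. On $U$ the function $f$ equals some constant $c\ne0$. By continuity $f=c$ on the closure of $U$ in $\Omega$, so $U$ is also closed in $\Omega$. Since $\Omega$ is connected, $U=\Omega$. Hence $f$ is either identically zero or a nonzero constant, and in both cases it is constant. This proves the first assertion.

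For the consequences, suppose $C=fC_1$ has the local midpoint invariance property. Theorem~\ref{thm:E4}(2) applied with $\Gamma_1=K$ gives $\mathcal{E}_K\equiv0$, so $f$ is constant by the first part, and hence $C$ is constant. In dimension $d=1$, every totally symmetric $3$-tensor field can be written as $C=f\,dx^{\otimes3}$, which is of the form $fC_1$ with $C_1=dx^{\otimes 3}$. So Conjecture~\ref{conj:rigidity} holds for $d=1$ and for all $C=fC_1$ with $C_1$ constant.
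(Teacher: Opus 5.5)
There is a genuine gap in the step meant to show $\nabla f=0$ on $\{f\ne0\}$. Your formula $\mathcal{E}_K=f\bigl(5\langle\nabla f,k\rangle k-2\langle\nabla f,h\rangle K_1(h,k)\bigr)$ is correct; it is the paper's \eqref{eq:E4-rankone}. The identity $\langle K_1(h,k),h\rangle=\|k\|^2$ is also correct. However, pairing the first term with $h$ gives $5\langle\nabla f,k\rangle\langle k,h\rangle=5\langle\nabla f,k\rangle\,C_1(h,h,h)$, not $5\langle\nabla f,k\rangle\|k\|^2$. The correct scalar identity is
\[
5\,\langle\nabla f,K_1(h,h)\rangle\,C_1(h,h,h)=2\,\langle\nabla f,h\rangle\,\|K_1(h,h)\|^2 ,
\]
and both sides are homogeneous of degree five in $h$. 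In fact every term of $\mathcal{E}_K(x;h)$ is homogeneous of degree four in $h$, so replacing $h$ by $th$ can never separate the terms. The degree mismatch you found comes only from the slip. So nothing in your argument excludes a point with $f(x)\ne0$ and $\nabla f(x)\ne0$.

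What is missing is a real algebraic lemma: if a nonzero linear form $\ell=\langle n,\cdot\rangle$ satisfies $5\,\ell(K_1(h,h))K_1(h,h)=2\,\ell(h)K_1(h,K_1(h,h))$ for all $h$, then $C_1=0$. The paper proves this from the vector identity itself, not its pairing with $h$.

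\begin{itemize}
\item Normalize $\|n\|=1$, put $H=n^\perp$, and write $h=h_0+sn$ with $h_0\in H$.
\item Restricting to $h\in H$ gives $\ell(K_1(h,h))K_1(h,h)=0$, hence $C_1(H,H,n)=0$. Consequently $K_H\coloneqq K_1|_{H\times H}$ is $H$-valued, $K_1(h_0,n)=\langle b,h_0\rangle n$, and $K_1(n,n)=b+cn$. (Your pairing with $h$, restricted to $H$, would only give $C_1(h,h,n)\,C_1(h,h,h)\equiv0$ on $H$.)
\item Put $q=K_H(h_0,h_0)$ and $\beta=\langle b,h_0\rangle$. Comparing powers of $s$ in the $H$- and $n$-components gives $K_H(h_0,q)=5\beta q$, $K_H(h_0,b)=3\beta b$, and $\langle b,q\rangle=8\beta^2$.
\item Pairing $K_H(h_0,b)=3\beta b$ with $h_0$ and using total symmetry gives $\langle b,q\rangle=3\beta^2$. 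Hence $\beta=0$ for every $h_0$, so $b=0$.
\item The first relation then gives $\|q\|^2=0$, so $K_H=0$, and finally the $n$-component forces $c=0$.
\end{itemize}

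With this lemma in place of your homogeneity step, your connectedness argument and your deduction of the two consequences from Theorem~\ref{thm:E4}(2) go through as written.
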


\begin{proof}[Proof of Theorem~\ref{thm:E4}]
Let $\Gamma$ be a smooth symmetric $(1,2)$-tensor field, $A_\Gamma$ the local midpoint map of $\nabla^E + \Gamma$, and $o \in \Omega$.
Put $P = o - h/2$ and $Q = o + h/2$.
Write the local geodesic from $P$ to $Q$ as $\gamma(t) = o + \sigma h + u(t)$ with $\sigma \coloneqq t - \frac{1}{2}$ and $u(0) = u(1) = 0$.
Then $\ddot{u} = -\Gamma_\gamma(h + \dot{u}, h + \dot{u})$.
We expand $\Gamma_\gamma$ at $o$ and write $u = u_2 + u_3 + u_4 + O(\|h\|^5)$ by homogeneity in $h$.
We solve the resulting two-point boundary value problems at each degree, as in Section~\ref{sec:parallel-cubic}.
With $k \coloneqq \Gamma(h,h)$ and $n \coloneqq 2\Gamma(h,k) - (D_h\Gamma)(h,h)$, we obtain
\[ u_2 = \left(\frac{1}{8} - \frac{\sigma^2}{2}\right)k, \qquad u_3 = \left(\frac{\sigma^3}{6} - \frac{\sigma}{24}\right)n, \qquad \ddot{u}_4 = B_0 + B_2\sigma^2, \]
where
\begin{align*}
B_0 &= \frac{1}{12}\Gamma(h,n) - \frac{1}{8}(D_k\Gamma)(h,h), \\
B_2 &= -\Big(\Gamma(k,k) + \Gamma(h,n) - 2(D_h\Gamma)(h,k) - \frac{1}{2}(D_k\Gamma)(h,h) \\
&\qquad\quad + \frac{1}{2}(D^2\Gamma)(h,h)(h,h)\Big),
\end{align*}
and hence $u_4(1/2) = -\frac{1}{8}B_0 - \frac{1}{192}B_2$.
Since $A_\Gamma(o - h/2, o + h/2) = \gamma(1/2)$ is an even function of $h$ by the symmetry of $A_\Gamma$, this gives
\begin{equation}\label{eq:expansion4}
A_{\Gamma}\left( o - \frac{h}{2},\, o + \frac{h}{2} \right) = o + \frac{1}{8}\,\Gamma(h,h) + \frac{1}{384}\, T_{\Gamma}(h) + O\left(\|h\|^6\right),
\end{equation}
\begin{align*}
T_{\Gamma}(h) \coloneqq\ & -4\,\Gamma\left(h, \Gamma(h, \Gamma(h,h))\right) + 2\,\Gamma\left(\Gamma(h,h), \Gamma(h,h)\right) \\
& + (D^2\Gamma)(h,h)(h,h) + \mathcal{E}_\Gamma(o;h) .
\end{align*}
Adding \eqref{eq:expansion4} for $\Gamma_1$ and $\Gamma_2$ and using the hypothesis, we see that the homogeneous parts of degree $2$ and $4$ vanish.
The degree-$2$ part gives $\Gamma_1(h,h) + \Gamma_2(h,h) = 0$ for all $h$, hence (1) by symmetry.
With $\Gamma_2 = -\Gamma_1$, the first three terms of $T_{\Gamma_1} + T_{-\Gamma_1}$ cancel because they are of odd degree in $\Gamma_1$.
Also, $\mathcal{E}_{-\Gamma_1} = \mathcal{E}_{\Gamma_1}$.
Thus the degree-$4$ part is $\frac{1}{192}\mathcal{E}_{\Gamma_1}(o;h)$, which proves (2).
\end{proof}

For $d = 1$ and $\Gamma = a(x)$, \eqref{eq:expansion4} reads
\[ A_\Gamma\left(o - \frac{h}{2}, o + \frac{h}{2}\right) = o + \frac{a}{8}h^2 + \frac{a'' + 3aa' - 2a^3}{384}h^4 + O(h^6), \]
in accordance with the expansion of the quasi-arithmetic mean with generator $\theta$ satisfying $\theta''/\theta' = a$, and $\mathcal{E}_\Gamma(x;h) = 3a(x)a'(x)h^4$.

\begin{proof}[Proof of Theorem~\ref{thm:rankone}]
Let $K_1$ be the $(1,2)$-tensor associated with $C_1$.
Since $K = fK_1$ and $D_vK = df(v)K_1$, \eqref{eq:E4} becomes
\begin{equation}\label{eq:E4-rankone}
\mathcal{E}_{K}(x;h) = f(x)\Big( 5\, df_x\left(K_1(h,h)\right)\, K_1(h,h) - 2\, df_x(h)\, K_1\left(h, K_1(h,h)\right) \Big) .
\end{equation}
We claim that if a nonzero linear form $\ell = \langle n,\cdot\rangle$ satisfies
\begin{equation}\label{eq:alg}
5\,\ell\left(K_1(h,h)\right)K_1(h,h) = 2\,\ell(h)\,K_1\left(h,K_1(h,h)\right) \qquad \text{for all } h \in \mathbb{R}^d,
\end{equation}
then $C_1 = 0$.
Normalize $\|n\| = 1$ and put $H \coloneqq n^\perp$.
For $h \in H$, \eqref{eq:alg} gives $\ell(K_1(h,h))K_1(h,h) = 0$, hence $C_1(h,h,n) = 0$, and by polarization $C_1(H,H,n) = 0$.
Thus $K_H \coloneqq K_1|_{H \times H}$ takes values in $H$, and, for $h \in H$,
\[ K_1(h,n) = \langle b,h\rangle n, \qquad K_1(n,n) = b + cn, \]
where $b \in H$ is defined by $\langle b,h\rangle = C_1(h,n,n)$ for $h \in H$ and $c \coloneqq C_1(n,n,n)$.
Insert $h = h_0 + sn$ with $h_0 \in H$ and $s \in \mathbb{R}$.
Put $q \coloneqq K_H(h_0,h_0)$ and $\beta \coloneqq \langle b,h_0\rangle$.
Then $K_1(h,h) = (q + s^2b) + s(2\beta + sc)n$.
Comparing the coefficients of $s^1$ and $s^3$ in the $H$-component of \eqref{eq:alg}, and of $s^2$ in its $n$-component, yields, for all $h_0 \in H$,
\[ K_H(h_0,q) = 5\beta q, \qquad K_H(h_0,b) = 3\beta b, \qquad \langle b,q\rangle = 8\beta^2 . \]
Pairing the second relation with $h_0$ and using the total symmetry of $C_1$, we obtain $\langle b,q\rangle = 3\beta^2$.
Hence $\beta = 0$ for all $h_0$, that is, $b = 0$.
Pairing the first relation with $h_0$ then gives $\|q\|^2 = 0$, that is, $K_H = 0$.
Finally, the $n$-component reduces to $5s^4c^2 = 2s^4c^2$, so $c = 0$.
Hence $C_1 = 0$, proving the claim.

By the claim and \eqref{eq:E4-rankone}, $df_x = 0$ at every $x$ with $f(x) \ne 0$.
Thus $f$ is locally constant on the open set $\{f \ne 0\}$.
Each nonempty level set $\{f = a\}$ with $a \ne 0$ is open and closed in the connected set $\Omega$.
Hence $f$ is constant.
The last assertion follows from Theorem~\ref{thm:E4}, since for $d = 1$ every $C$ is of the form $fC_1$ with $C_1 = dx^3$.
\end{proof}

For $d = 1$, this conclusion agrees with Theorem~\ref{thm:rigidity}.
Indeed, with $C = a(x)\,dx^3$ and $g = dx^2$, a $\nabla$-affine coordinate $\theta$ satisfies $\theta' = e^{\int^x a}$, and the metric coefficient is $\widetilde{w} = 1/(\theta')^2$.
The two alternatives in Theorem~\ref{thm:rigidity}(c) are exactly $a \equiv 0$ and $a \equiv \pm c^{-1/2}$.

\begin{Rem}\label{rem:conjecture}
(1) The duality between $\nabla$ and $\nabla^*$ cannot be dropped from Conjecture~\ref{conj:rigidity}.
Let $d = d_1 + d_2$, write $x = (x',x'') \in \mathbb{R}^{d_1} \times \mathbb{R}^{d_2}$, let $F \colon \mathbb{R}^{d_1} \to \mathbb{R}^{d_2}$ be smooth, and let $\nabla_1$, $\nabla_2$ be the flat connections with affine coordinates $\theta(x) = (x', x'' + F(x'))$ and $\eta(x) = (x', x'' - F(x'))$.
Their midpoint maps are the quasi-arithmetic means $M_\theta$ and $M_\eta$, and a direct computation gives $M_\theta(P,Q) + M_\eta(P,Q) = P + Q$ on all of $\mathbb{R}^d \times \mathbb{R}^d$.
The Christoffel tensors $\pm\left(0, D^2F(X',Y')\right)$ are not constant unless $F$ is a polynomial of degree at most two.
Also, $\nabla_2$ is not the Euclidean dual of $\nabla_1$ unless $D^2F \equiv 0$.
Indeed, duality would require $\langle \Gamma_1(X,Y),Z\rangle$ to be symmetric in $(Y,Z)$.
Thus, without duality, \eqref{eq:local-sum} admits infinite-dimensional families of nonconstant solutions in every dimension $d \ge 2$, even among flat connections. \\
(2) It remains open whether $\mathcal{E}_K \equiv 0$ alone implies that $C$ is constant.
An affirmative answer would prove Conjecture~\ref{conj:rigidity}.
The local midpoint invariance property gives further pointwise conditions of every even order $2m \ge 6$.
These follow from the terms of degree $2m$ in the expansion of $A + M$ at the diagonal.
It is not known whether these further conditions are needed.
\end{Rem}

\appendix

\section{Remarks on dualistic structures}\label{app:remarks}

In this appendix, we give further explanations of the dualistic structure in Section~\ref{sec:setting} and compare it with related results in the literature.
These remarks are not needed for the main arguments.

\begin{Rem}[Duality versus metric compatibility]\label{rem:duality-vs-metricity}
Duality with respect to $g$ and metric compatibility of either member of the dual pair are different notions.
Indeed, the defining identity \eqref{eq:duality} gives
\begin{equation}\label{eq:duality-nonmetricity}
  (\nabla_X g)(Y,Z)   = g\left(Y,(\nabla_X^{*}-\nabla_X)Z\right),  \quad  (\nabla_X^{*}g)(Y,Z) = -(\nabla_Xg)(Y,Z).
\end{equation}
These identities are standard in the theory of conjugate connections;
see \cite{AmariNagaoka2000,nomizu-1992}.
Since $g$ is nondegenerate, \eqref{eq:duality-nonmetricity} implies the equivalence
\begin{equation}\label{eq:metric-self-dual}
  \nabla g = 0 \quad \Longleftrightarrow \quad \nabla  =  \nabla^{*}  \quad \Longleftrightarrow \quad  \nabla^{*} g = 0.
\end{equation}

Thus, the statement that the dual of a metric connection is again metric is correct only with respect to the same metric $g$ used in the duality relation \eqref{eq:duality} (see balanced metrics~\cite{thanwerdas2019exploration}).
In that case, the two connections in fact coincide.
Metricity with respect to some other metric does not imply \eqref{eq:metric-self-dual}.
\end{Rem}

\begin{Rem}[The arithmetic mean of a dual pair]\label{rem:mean-connection}
Even when neither connection is $g$-metric, their arithmetic mean $\overline{\nabla} \coloneqq  (\nabla + \nabla^{*})/2$ is always $g$-metric.
If both $\nabla$ and $\nabla^{*}$ are torsion-free, then $\overline{\nabla}$ is also torsion-free.
Therefore, the fundamental theorem of Riemannian geometry yields $\overline{\nabla} = \nabla^{g}$ and hence $\nabla^{*} = 2\nabla^{g} - \nabla$, which is \eqref{eq:dual-average}.  
In particular, if either member of a torsion-free dual pair is $g$-metric, then
\[  \nabla  =  \nabla^{*}  =  \nabla^{g}, \quad A  =  M  =  M_g. \]

If torsion is allowed but either member of the dual pair is still $g$-metric, 
\eqref{eq:metric-self-dual} gives $\nabla  =  \nabla^{*}$ and hence $A  =  M$, but this common metric connection need not be the Levi--Civita connection.
Consequently, $A  =  M_g$ does not follow in general.
\end{Rem}

\begin{Rem}[Torsion, Codazzi coupling, and statistical manifolds]\label{rem:torsion-codazzi}
The duality relation \eqref{eq:duality} also relates the antisymmetric part of $\nabla g$ to the difference of the torsion tensors.
If
\[ T^D(X,Y) \coloneqq D_{X} Y - D_{Y} X - [X,Y]\]
denotes the torsion of an affine connection $D$, then
\begin{equation}\label{eq:dual-torsion-codazzi}
  g\left(T^{\nabla^*}(X,Y)-T^\nabla(X,Y),Z\right)  = (\nabla_{X}g)(Y,Z)-(\nabla_{Y}g)(X,Z).
\end{equation}
This is the torsion identity for conjugate connections
\cite[Lemma~2, Eq.~(3)]{zhang2019hessian}.  
The metric $g$ and the connection $\nabla$ are said to be \emph{Codazzi-coupled} when
\begin{equation}\label{eq:codazzi-coupling}
  (\nabla_{X} g)(Y,Z)=(\nabla_{Y} g)(X,Z)
\end{equation}
for all vector fields $X,Y,Z$.  
Since $(\nabla_Xg)(Y,Z)$ is symmetric in $Y,Z$, condition~\eqref{eq:codazzi-coupling} is equivalent to the total symmetry of the cubic tensor $\nabla g$.  
By the nondegeneracy of $g$, Equation~\eqref{eq:dual-torsion-codazzi} implies that  $(g,\nabla)$ is Codazzi-coupled if and only if $T^{\nabla^*} = T^\nabla$.
In particular, if either member of the conjugate pair is torsion-free,
Codazzi coupling is equivalent to torsion-freeness of the other member.
Thus, the usual statistical-manifold assumption that both conjugate connections are torsion-free is equivalently expressed as torsion-freeness of one member together with Codazzi coupling.

Matumoto's realization theorem further shows that every such statistical structure is induced by a globally defined smooth contrast function on $\mathcal{P} \times \mathcal{P}$ through its second- and third-order derivatives along the diagonal \cite[Theorem~1]{matumoto1993any}.
This realization is generally nonunique and implies neither flatness nor the midpoint invariance \eqref{eq:Mg-inv} studied in this paper.
\end{Rem}

\begin{Rem}[Curvature-freeness and affine coordinates]\label{rem:curvature-freeness}
We explain the roles of curvature-freeness and torsion-freeness separately.
We call an affine connection $D$ {\it curvature-free} when $R^D = 0$, and {\it flat} when it is both curvature-free and torsion-free.  
The definition $A = M_\nabla$ does not require $\nabla$ to be curvature-free.
The simultaneous vanishing of curvature and torsion gives affine coordinates which linearize all $\nabla$-geodesics.
Indeed, if $R^\nabla=0$ and $T^\nabla=0$, then locally there are $\nabla$-affine coordinates $\theta$ in which $\theta(A(P,Q)) = (\theta(P)+\theta(Q)) /2$. 
The ordinary arithmetic formula $A(P,Q)= (P+Q)/2$ is the particular case in which the given matrix coordinates are such affine coordinates.  
If $\nabla$ is torsion-free and $R^{\nabla}$ is nonzero at a point, then $A$ is still a geodesic midpoint mean, but it admits no such quasi-arithmetic representation on a neighborhood of that point.
The torsion-free hypothesis is essential here: the connection
\[ \nabla^{\mathrm{sym}}_X Y\coloneqq\nabla_XY-\frac12T^\nabla(X,Y) \]
is torsion-free and has the same affinely parametrized geodesics, hence the same midpoint maps, as $\nabla$.
Thus a smooth local midpoint map determines $\nabla^{\mathrm{sym}}$, rather than the full connection $\nabla$; see Remark~\ref{rem:qam-comparison}.

In \cite{zhang2007note}, the curvatures of a pair of $g$-dual connections satisfy 
\[ g\left(R^{\nabla}(X,Y)Z,W\right)  =  -g\left(Z,R^{\nabla^{*}}(X,Y)W\right).\]   
Consequently, $R^\nabla = 0$ if and only if   $R^{\nabla^*}=0$, 
although $R^{\nabla}$ and $R^{\nabla^*}$ need not be equal as curvature tensors.  
Curvature-freeness alone yields local parallel frames, whereas affine coordinates with vanishing connection coefficients require torsion-freeness as well.  
Thus, when both connections are curvature-free and torsion-free, each generally admits its own local affine coordinate system.
Duality does not make the two systems coincide.  
In particular, if $R^{\nabla} \ne 0$, then $R^{\nabla^*} \ne 0$. 
Neither curvature-freeness nor coordinate linearization is used in the invariance criterion of Theorem~\ref{thm:symmetry}.  
It requires only the stated local uniqueness and symmetry hypotheses for the relevant midpoint constructions.
Convergence is used in Theorem~\ref{thm:invariance} only to identify the invariant midpoint with the Gauss composition. 
\end{Rem}

\vspace{1pc}

\noindent{\it Acknowledgements}  The author K.O. has received funding from JSPS KAKENHI Grant Number JP22K13928.  

\vspace{1pc}

\noindent{\bf Use of generative AI} \ During the preparation of this work, the authors used Claude and ChatGPT to assist with exploration, structuring, drafting, and refining the text. 
After using these tools, the authors reviewed and edited the content as needed. 
The authors take full responsibility for the content and integrity of the publication.

\bibliographystyle{plain}
\bibliography{RieCompoundMeanBIB}

\end{document}